\newcommand{\R}{\mathbb{R}}
\theoremstyle{plain}
\newtheorem{thm}{Theorem}[section]
\newtheorem*{thm*}{Theorem}
\newtheorem{lemma}{Lemma}[section]
\newtheorem*{lemma*}{Lemma}
\newtheorem{prop}{Proposition}[section]
\newtheorem*{prop*}{Proposition}
\newtheorem{cor}{Corollary}[section]
\newtheorem*{cor*}{Corollary}
\theoremstyle{definition}
\newtheorem{defn}{Definition}[section]
\newtheorem*{defn*}{Definition}
\newtheorem{conj}{Conjecture}[section]
\newtheorem{exmp}{Example}[section]
\theoremstyle{remark}
\newtheorem*{rem}{Remark}
\author{Dami Lee}
\title{Geometric realizations of cyclically branched coverings over punctured spheres}
\date{}
\begin{document}

\maketitle
\begin{abstract}
In classical differential geometry, a central question has been whether abstract surfaces with given geometric features can be realized as surfaces in Euclidean space. Inspired by the rich theory of embedded triply periodic minimal surfaces, we seek examples of triply periodic polyhedral surfaces that have an identifiable conformal structure. In particular, we are interested in explicit cone metrics on compact Riemann surfaces that have a realization as the quotient of a triply periodic polyhedral surface. This is important as Riemann surfaces where one has equivalent descriptions are rare. We construct periodic surfaces using graph theory as an attempt to make Schoen’s heuristic concept of a dual graph rigorous. We then apply the theory of cyclically branched coverings to identify the conformal type of such surfaces.
\end{abstract}

\tableofcontents


\section{Introduction}
In classical differential geometry, a central question has been whether abstract surfaces with given geometric features can be realized as surfaces in Euclidean space. Early results include Hilbert's proof that no complete surface with constant negative curvature can be immersed isometrically in Euclidean space \cite{hilbert1901uber} and the theorem of Hartmann-Nirenberg that a complete hypersurface of constant zero curvature in $(n + 1)$-Euclidean space is an $(n - 1)$-cylinder \cite{hartmann1959spherical}. In this paper, we are particularly interested in the constraints that the conformal structure of a surface (along with other geometric features) puts on possible realizations of the surface in Euclidean space. This question has been well studied in cases where the connection between surface geometry and conformal type is particularly strong for example, in minimal and constant mean curvature surfaces. Here we address this problem for another class of surfaces, surfaces with cone metrics. Two fundamental results are the following from \cite{troyanov1991prescribing} and \cite{alexandrov2005convex} respectively. 

\begin{thm*} [Schwarz-Christoffel; Troyanov] Let $X$ be a compact Riemann surface and $p_1,$ $\ldots ,$ $p_n$ be finitely many points on $X.$ Let $\theta_1, \ldots , \theta_n$ be positive numbers so that $-2 \pi \chi(X) = \sum\limits_{i = 1}^n (\theta_i - 2 \pi),$ then there exists a conformal flat metric on $X$ with cone angles $\theta_i$ at $p_i$ for each $i.$ The metric is unique up to homothety. 
\end{thm*}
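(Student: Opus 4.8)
The plan is to reduce this to a \emph{linear} elliptic problem, exploiting the crucial fact that the target curvature is zero (so the nonlinear Liouville/Moser--Trudinger machinery needed for prescribing nonzero curvature can be bypassed). First I would fix an auxiliary smooth conformal metric $g_0$ in the conformal class of $X$ — any Riemann surface carries one — with Gaussian curvature $K_0$, area form $dA_0$, total area $A$, and Laplace--Beltrami operator $\Delta_0$ (with the sign convention $\Delta_0 = \mathrm{div}\,\mathrm{grad}$). Writing the sought metric as $g = e^{2u} g_0$, its Gaussian curvature away from the marked points is $e^{-2u}(K_0 - \Delta_0 u)$, so demanding flatness on $X \setminus \{p_1, \dots, p_n\}$ amounts to the single linear equation $\Delta_0 u = K_0$ there. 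Setting $\beta_i = \frac{\theta_i}{2\pi} - 1 = \frac{\theta_i - 2\pi}{2\pi}$, the angle hypothesis becomes the clean divisor condition $\sum_i \beta_i = -\chi(X)$.

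Next I would build the singular part of $u$ from Green's functions. Let $G(\cdot, p_i)$ be the Green's function of $\Delta_0$, normalized so that $\Delta_0 G(\cdot,p_i) = \delta_{p_i} - A^{-1}$ with $G(\cdot,p_i) \sim \frac{1}{2\pi}\log d(\cdot,p_i)$ near $p_i$. I would seek $u$ in the form
\[
u = 2\pi \sum_{i=1}^n \beta_i\, G(\cdot, p_i) + w,
\]
with $w$ smooth across all the $p_i$. Substituting, the delta contributions are supported at the cone points (where flatness is not imposed), and the equation collapses to the global Poisson equation $\Delta_0 w = F$ on the closed surface, where $F := K_0 + \tfrac{2\pi}{A}\sum_i \beta_i$ is smooth.

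The crux is solvability. A Poisson equation on a compact surface is solvable precisely when the right-hand side is $L^2$-orthogonal to the constants, i.e.\ $\int_X F\, dA_0 = 0$. Here $\int_X F\, dA_0 = 2\pi\chi(X) + 2\pi\sum_i\beta_i$ by Gauss--Bonnet, and this vanishes exactly under the hypothesis $\sum_i\beta_i = -\chi(X)$. This coincidence is the whole point: the angle constraint is not an extra assumption but precisely the integrability condition that renders the linear problem solvable. Standard elliptic theory then produces a smooth $w$, unique up to an additive constant, and I assemble $u$; the free additive constant accounts for the homothety ambiguity.

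Finally I would verify the geometric conclusion and uniqueness. The local analysis near $p_i$ is where the real care lies: since $2\pi\beta_i G(\cdot, p_i) \sim \beta_i\log|z|$ in a holomorphic coordinate $z$ centered at $p_i$, the conformal factor behaves like $|z|^{2\beta_i}$, and the substitution $\zeta = z^{1+\beta_i}$ identifies $e^{2u}|dz|^2$ with a Euclidean cone of total angle $2\pi(1+\beta_i) = \theta_i$; the hypothesis $\theta_i > 0$, i.e.\ $\beta_i > -1$, is exactly what guarantees finite area near the cone point. For uniqueness, if two such metrics $e^{2u_1}g_0$ and $e^{2u_2}g_0$ share the cone data, then $u_1 - u_2$ is harmonic away from the $p_i$ and bounded near each of them (the $\beta_i\log$ singularities cancel), so it extends to a global harmonic function on the closed surface by removable singularity, hence is constant; the two metrics therefore differ by a homothety. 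I expect the main obstacle to be the regularity bookkeeping at the cone points — matching the logarithmic coefficient to the realized cone angle and confirming the apparent singularities are removable — rather than the existence mechanism itself, which here is linear and transparent.
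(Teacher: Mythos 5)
The paper never proves this statement: it is quoted as a background result, attributed to Troyanov and the Schwarz--Christoffel tradition, both in the Introduction and again in Section 3.3, so there is no internal proof to compare yours against. On its own merits, your argument is correct, and it is essentially the standard proof in the flat case: because the prescribed curvature is zero, the Liouville equation degenerates to the linear Poisson problem $\Delta_0 w = K_0 + \frac{2\pi}{A}\sum_i \beta_i$, and the angle hypothesis $\sum_i \beta_i = -\chi(X)$ is, via Gauss--Bonnet, exactly the zero-mean condition that solvability on a compact surface requires. Two points deserve slightly more care than your sketch gives them. First, near $p_i$ the conformal factor of your metric is $|z|^{2\beta_i}e^{2v}$ with $v$ bounded and, by the flatness already established, harmonic across the puncture; to exhibit a genuine cone of angle $\theta_i$ you should write $v = \mathrm{Re}\,h$ with $h$ holomorphic and take the developing map $\zeta = \int z^{\beta_i}e^{h}\,dz \sim c\,z^{1+\beta_i}$, rather than the bare substitution $\zeta = z^{1+\beta_i}$, which ignores the smooth factor. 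Second, your uniqueness argument tacitly uses the fact that \emph{any} conformal flat metric with cone angle $\theta_i$ at $p_i$ has conformal factor with asymptotics $\beta_i\log|z| + O(1)$ in a holomorphic coordinate; this is either built into the definition of a conical singularity or proved by the same developing-map argument, and once granted, $u_1 - u_2$ is bounded and harmonic off finitely many points, hence extends to a harmonic function on the compact surface and is constant, as you say. Neither issue is a gap in the mechanism, which is sound and, unlike Troyanov's general prescribed-curvature theorem, requires no nonlinear analysis.
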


\begin{thm*} [Aleksandrov] Any cone metric of positive curvature on the 2-sphere can be realized by the boundary of a convex body in Euclidean space.
\end{thm*}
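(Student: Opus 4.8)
The statement is Aleksandrov's realization theorem, and the plan is to argue by the \emph{method of continuity} (Aleksandrov's deformation argument): I would first treat metrics whose curvature is concentrated at finitely many points (the polyhedral case) and then approximate a general positively curved cone metric by such polyhedral ones. So suppose first that the metric has $n$ cone points $p_1,\dots,p_n$ with angles $\theta_i$. Positivity of curvature means each concentrated curvature $2\pi - \theta_i$ is positive, i.e.\ $\theta_i < 2\pi$, and Gauss--Bonnet forces $\sum_{i=1}^n (2\pi - \theta_i) = 4\pi$. The first step is to set up two parameter spaces of equal dimension. On one side, let $M_n$ denote the space of such cone metrics on $S^2$ up to isometry. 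Combining the moduli of $n$ marked points on $\CP^1$ (real dimension $2n-6$) with the $n$ cone angles subject to the single Gauss--Bonnet relation and an overall homothety, the Schwarz--Christoffel--Troyanov theorem above identifies $M_n$ as a \emph{connected} manifold of dimension $(2n-6) + (n-1) + 1 = 3n-6$. On the other side, let $P_n$ be the space of convex polytopes in $\R^3$ with $n$ vertices, modulo rigid motions: assigning the $3n$ vertex coordinates and quotienting by the $6$-dimensional isometry group of $\R^3$ gives $\dim P_n = 3n-6$ as well.

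The central object is the \emph{realization map} $\Phi\colon P_n \to M_n$ sending a convex polytope to the intrinsic flat cone metric induced on its boundary. I would establish three properties of $\Phi$. First, $\Phi$ is continuous, which is immediate from the explicit dependence of edge lengths and cone angles on the vertex coordinates. Second, $\Phi$ is injective: this is the uniqueness (rigidity) half of the theory, namely that a convex polytope is determined up to congruence by its intrinsic metric, which is the intrinsic form of Cauchy's rigidity theorem (Cauchy--Aleksandrov--Pogorelov rigidity). Third, $\Phi$ is proper. Granting these, invariance of domain applies: a continuous injection between manifolds of the same dimension $3n-6$ is open, so $\Phi(P_n)$ is open in $M_n$; properness makes $\Phi(P_n)$ closed; and since $M_n$ is connected and nonempty, $\Phi(P_n) = M_n$. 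Hence every polyhedral cone metric of positive curvature is realized.

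I expect the properness step to be the main obstacle, since it is where the geometry, rather than the bookkeeping, enters. One must show that if polytopes $Q_k$ have induced metrics converging in $M_n$, then, after applying rigid motions, the $Q_k$ converge to a genuine polytope and do not degenerate---neither collapsing to a lower-dimensional convex set nor escaping to infinity, and with no two of the $n$ cone points colliding (which would drop the stratum of $M_n$). This demands a priori geometric bounds: the positive lower curvature bound together with the fixed total curvature $4\pi$ should control the diameter from above and the inradius from below, preventing flattening and collision. These estimates, fed into the Blaschke selection theorem to extract a convergent subsequence of convex bodies, form the technical heart; one must also verify that the cone points cannot migrate onto faces or edges in the limit, so that the limiting body is again an $n$-vertex polytope whose metric is the prescribed one.

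Finally, to pass from the polyhedral case to an arbitrary cone metric of positive curvature, I would approximate the given metric by polyhedral cone metrics of positive curvature (refining the cone-point set while keeping all angles below $2\pi$ and the total curvature equal to $4\pi$), realize each approximant as a convex polytope by the argument above, and secure uniform diameter and inradius bounds as before. Blaschke selection then yields a limiting convex body, and a final verification---that the intrinsic distances on the approximating boundaries converge to those of the limit---identifies its boundary metric with the prescribed one, completing the realization.
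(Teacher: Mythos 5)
For comparison purposes, note first that the paper does not prove this statement at all: it is quoted as classical background, with a citation to Alexandrov's book \cite{alexandrov2005convex}. So your proposal must be judged on its own terms, and on its own terms it is, in outline, Aleksandrov's original method-of-continuity proof: the dimension counts $3n-6$ on both sides are correct, injectivity is correctly delegated to Aleksandrov's uniqueness theorem (and you are right to insist on the intrinsic version --- Cauchy's theorem alone does not suffice, since the intrinsic metric does not a priori determine the face lattice), and openness of the image then follows from invariance of domain.

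The genuine gap is the properness step, and it is not a technicality: properness, as you state it, is \emph{false}, and so is the intended conclusion $\Phi(P_n)=M_n$ when $P_n$ consists of non-degenerate polytopes. Flatten a tetrahedron onto a square, say with vertices $(\pm 1,0,h)$ and $(0,\pm 1,-h)$ as $h\to 0$: the induced boundary metrics converge in $M_4$ to the metric of the doubly covered square (four cone points of angle $\pi$, total curvature $4\pi$), yet the tetrahedra leave every compact subset of $P_4$. Worse, by the very uniqueness theorem you invoke for injectivity, that limit metric is realized \emph{only} by the degenerate body, so it lies in $M_4\setminus\Phi(P_4)$: the image is open but not closed, and the open--closed--connected argument cannot close up. (Your heuristic that positive curvature and total curvature $4\pi$ force a lower bound on the inradius is precisely what fails --- nothing obstructs flattening.) The classical repair is to build the degenerate bodies into both the statement and the proof: one declares a metric realizable if it is the boundary metric of a convex polyhedron \emph{or} of a doubly covered convex polygon, proves closedness in $M_n$ for this enlarged notion, and then supplies a separate openness argument at degenerate realizations, where invariance of domain is unavailable --- near the degenerate stratum the space of polyhedra modulo congruence is no longer a manifold, because a polytope and its mirror image are intrinsically isometric and these two sheets merge along that stratum. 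Aleksandrov devotes a dedicated deformation lemma to exactly this point, and without it (or some substitute) the argument does not go through. A last, smaller remark: with the paper's definition of a cone metric (an atlas of charts into planes and cones, hence finitely many cone points on a compact surface), every positively curved cone metric on $\mathbb{S}^2$ is already ``polyhedral,'' so your closing approximation step is superfluous rather than wrong.
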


Little is known in other cases. Inspired by the rich theory of embedded triply periodic minimal surfaces, we seek examples of triply periodic polyhedral surfaces in Euclidean space that have an identifiable conformal structure. In particular, we are interested in explicit cone metrics on compact Riemann surfaces that have a realization as the quotient of a triply periodic polyhedral surface in Euclidean space. Via the cone metrics we can derive holomorphic 1-forms, algebraic equations, and a hyperbolic structure. This is important because Riemann surfaces with equivalent descriptions are rare. They can be used to construct examples in areas including billiards, hyperbolic length spectrum, and minimal surfaces. The non-periodic case of genus zero is answered by Troyanov and Aleksandrov to some extent. \\

As a way to obtain cases for higher genera, we devise a construction method for triply periodic polyhedral surfaces that are potentially highly symmetric. In \cite{lee2017triply}, Lee investigates an example of a triply periodic polyhedral surface whose vertices are Weierstrass points. In Section~\ref{ch2 octa-4}, we summarize the paper and discuss the surface's geometric construction in $\R^3$ and its abstract quotient surface. Due to its many symmetries on the underlying surface, we can identify its hyperbolic structure. Specifically, it is identified as an eightfold cyclically branched cover over a thrice punctured sphere. As a result, we carry out explicit computations regarding cone metrics, basis of holomorphic 1-forms, automorphisms, algebraic descriptions, etc. Furthermore, we reach the following theorem.

\begin{thm*} [\cite{lee2017triply}] The conformal structure on the underlying surface of the Octa-4 is conformally equivalent to the Fermat's quartic.
\end{thm*}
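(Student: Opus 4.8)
The plan is to prove the equivalence by exploiting the rigidity of cyclic branched covers over a thrice-punctured sphere. Recall from Section~\ref{ch2 octa-4} that the underlying surface of the Octa-4 is realized as an eightfold cyclic cover $\pi_O : X_O \to \CP^1$ branched over three points, with prescribed local monodromy (branching) data. The central observation is that such a cover carries no moduli: three points on $\CP^1$ can be sent to $0, 1, \infty$ by a M\"obius transformation, so once the branch locus is normalized the cover is determined up to biholomorphism purely by its combinatorial monodromy datum $(a_1, a_2, a_3) \in (\Z/8)^3$, taken modulo simultaneous multiplication by a unit (a change of generator of the deck group) and permutation of the branch points. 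It therefore suffices to realize Fermat's quartic $F : x^4 + y^4 + z^4 = 0$ as an eightfold cyclic cover of $\CP^1$ over three points and to verify that its monodromy datum agrees with that of the Octa-4.

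To produce the cover structure on $F$, I would first exhibit an order-eight automorphism. In the affine chart $z = 1$, with $u = x/z$, $v = y/z$ and curve $u^4 + v^4 + 1 = 0$, the map $\alpha : (u,v) \mapsto (v, iu)$ is a well-defined automorphism of $F$ with $\alpha^8 = \mathrm{id}$. A direct fixed-point computation shows that the only points with nontrivial stabilizer lie on the line $z = 0$: two of the four points $[\zeta_8^{2k+1} : 1 : 0]$ are fixed by $\alpha$ itself (stabilizer of order $8$), while the remaining two form a single orbit of size two (stabilizer $\langle \alpha^2\rangle$ of order $4$). Applying Riemann-Hurwitz to $F \to F/\langle \alpha \rangle$ with ramification contribution $7 + 7 + 2\cdot 3 = 20$ gives $2\cdot 3 - 2 = 8(2g - 2) + 20$, forcing $g(F/\langle\alpha\rangle) = 0$. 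Hence $F$ is an eightfold cyclic cover of $\CP^1$ branched over exactly three points with branching orders $(8, 8, 4)$, the same signature as the Octa-4.

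The main obstacle is that the branching orders alone do not pin down the cover: among the data $(a_1, a_2, a_3)$ with $\mathrm{ord}(a_1) = \mathrm{ord}(a_2) = 8$, $\mathrm{ord}(a_3) = 4$ and $a_1 + a_2 + a_3 \equiv 0 \pmod 8$, there are two inequivalent classes modulo units and permutations, so the precise rotation numbers must be matched. To compute them for $F$, I would diagonalize the induced action $\alpha^*$ on the span of $u, v$, whose eigenvalues are the primitive eighth roots of unity solving $\lambda^2 = i$; the corresponding eigenfunction $w = u + \sqrt{-i}\, v$ satisfies $\alpha^* w = \zeta_8 w$, so $w^8$ descends to an invariant coordinate and exhibits $F$ explicitly in the form $w^8 = R(t)$ for a rational function $R$ with three critical values. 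Reading off the local monodromy of this equation at the three branch points yields the datum $(a_1, a_2, a_3)$ for $F$, which I would then compare against the datum recorded for the Octa-4 in Section~\ref{ch2 octa-4}.

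Once the two monodromy data are seen to lie in the same equivalence class, the uniqueness of cyclic covers of the thrice-punctured sphere produces a biholomorphism $X_O \cong F$ covering a M\"obius identification of the two branch loci, which is precisely the asserted conformal equivalence. As an independent confirmation, I would transform the cyclic-cover equation $w^8 = R(t)$ by an explicit birational change of variables into the homogeneous form $x^4 + y^4 + z^4 = 0$, matching the algebraic description obtained for the Octa-4 directly.
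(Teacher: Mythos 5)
Your proposal is correct, but it takes a genuinely different route from the paper. The paper never looks at automorphisms of the Fermat curve: it stays entirely on the Octa-4 side, where the admissible cone metrics of Example~\ref{exmp: 125_} give the basis of holomorphic 1-forms $\omega_1, \omega_2, \omega_3$ with divisors $4\widetilde{p_3}$, $\widetilde{p_1}+\widetilde{p_2}_1+\widetilde{p_2}_2+\widetilde{p_3}$, $4\widetilde{p_1}$; setting $f=\omega_1/\omega_2$ and $g=\omega_3/\omega_2$, a divisor comparison shows that $f^3g$ and $fg^3$ differ by a constant, yielding the canonical quartic $\omega_1^3\omega_3-\omega_1\omega_3^3=\omega_2^4$, whose projective equivalence to $x^4+y^4+z^4=0$ is then quoted from \cite{lee2017triply}. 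You instead realize Fermat's quartic itself as an eightfold cyclic cover of $\CP^1$ branched over three points and appeal to rigidity: since three branch points can be M\"obius-normalized, covers with equivalent branching data (modulo units and permutations) are biholomorphic. This is the uniqueness lemma of Subsection~\ref{sec: construction} sharpened from homeomorphism to biholomorphism, which is legitimate because the complex structure on a cover of a fixed punctured $\CP^1$ is pulled back and the punctures fill in uniquely. Your route avoids the explicit projective change of coordinates that the paper outsources to its citation, and it is arguably the more natural argument given the machinery of Section~\ref{ch3 cyc br cov}; the paper's route, in exchange, produces the explicit canonical embedding, which it reuses later for Weierstrass points and the non-existence of a conformally equivalent minimal surface. (Note that your proposed ``independent confirmation'' at the end --- transforming $w^8=R(t)$ birationally into $x^4+y^4+z^4=0$ --- is in essence the paper's actual method.)

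Your computations on the Fermat side check out: the only points with nontrivial stabilizer under $\alpha:(u,v)\mapsto(v,iu)$ are the four points on $z=0$, two fixed and two forming a single orbit with stabilizer $\langle\alpha^2\rangle$, and Riemann-Hurwitz forces a genus-zero quotient with signature $(8,8,4)$. You also correctly isolate the crux: this signature admits exactly two inequivalent data classes, $(1,2,5)$ and $(1,1,6)$ (both appear in the genus-three table of Appendix~\ref{genera}), so the rotation numbers must actually be computed --- and this decisive step is the one you leave unexecuted. It does work out. In the chart $x=1$ with local coordinate $z$ at the fixed point $[1:\zeta_8:0]$, where $\zeta_8=e^{\pi i/4}$, the map $\alpha$ acts by $(y,z)\mapsto(i/y,\,z/y)$, hence rotates the tangent space by $\zeta_8^{-1}$; at $[1:\zeta_8^5:0]$ it rotates by $\zeta_8^{-5}$; and $\alpha^2$ rotates by $\zeta_4^{-1}$ at the size-two orbit. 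Taking $\alpha^{-1}$ as deck generator, the rotation exponents become $(1,5)$ at the fixed points, while the monodromy about the third branch point is the square of the generator, so the branching indices are $(1^{-1},5^{-1},2)=(1,5,2)\pmod 8$ --- the Octa-4 class $(1,2,5)$, not $(1,1,6)$. With that computation inserted, your argument is complete.
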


This motivates us to investigate other cyclically branched covers over punctured spheres. In Section~\ref{ch3 cyc br cov}, we study cyclically branched covers over punctured spheres in detail. Interesting results include the construction of cyclically branched covers over punctured spheres and a finiteness theorem (Theorem~\ref{thm: genera}).\\

Influenced by the theory of triply periodic minimal surfaces, we are also interested in geometric realizations of such coverings. In Section~\ref{ch4 reg tpps}, we use graph theory to develop construction methods on building triply periodic polyhedral surfaces. We define a decoration (Definition~\ref{defn: deco}) of a graph as a polyhedron that is homotopy equivalent to the graph. This is an attempt to make Schoen's heuristic concept of a dual graph rigorous (\cite{schoen2012reflections}). This construction method enables us to expand Coxeter-Petrie's classification of infinite regular polyhedral surfaces in \cite{coxeter1937regular}. 

\begin{defn} \label{defn: reg} A polyhedral surface is \textit{regular} if it is tiled by regular $p$-gons with $q$ of them at each vertex.
\end{defn}

A feature that was included in Coxeter-Petrie's classification was to ``let the polygonal faces go up and down in a zig-zag formation''. As a result, they introduced three triply periodic regular polyhedra named the Mucube, Muoctahedron, and Mutetrahedron. On these polyhedra, there exist two types of symmetry: one that permutes the vertices of a face, and another that permutes the faces that meet at a vertex. The surfaces divide Euclidean space into an ``inside'' and an ``outside'' compartment and the two symmetries mentioned above interchange the inside and outside. It is proved in \cite{coxeter1937regular} that these three are the only possible cases. Fairly similar is the Octa-4 surface as it is a regular polyhedral surface tiled by regular triangles, eight at each vertex. However, the Octa-4 surface is not included in Coxeter-Petrie's classification as it does not carry the zig-zag formation of the triangles. Because of this, it lacks the Euclidean symmetries that interchange the inside and outside compartments. Nevertheless, the compact quotient of all four surfaces are all genus three surfaces whose corresponding conformally equivalent hyperbolic surfaces are highly symmetric. We achieve a hyperbolic tessellation for each surface by mapping the Euclidean polygons to hyperbolic polygons, and the two symmetries mentioned above generate a group that acts transitively on the hyperbolic tessellations. With the many symmetries on the hyperbolic surfaces, we take the quotient of the surfaces by their symmetries. With symmetries, specifically rotations, the quotients become spheres. Hence, we say that the surfaces are branched coverings over spheres. We combine the theory of cyclically branched coverings over punctured spheres to identify the conformal type of each surface. The tools that we use include flat structures on Riemann surfaces, hyperbolic geometry, and algebraic geometry.\\

The contents of this paper is as follows: In Section~\ref{ch2 octa-4}, we look into the Octa-4 surface as our leading example. We build a triply periodic polyhedron by regular octahedra, whose boundary yields the Octa-4 surface. We identify the hyperbolic structure of the underlying surface as an eightfold cyclically branched cover over a thrice punctured sphere. Then, we identify the automorphism group of the underlying surface. In Section~\ref{ch3 cyc br cov}, we develop the theory of cyclically branched covers. We study the topological construction of such abstract surfaces and prove that there can be finitely many such coverings with a given genus. Then, we study maps between punctured spheres and their lifts on their coverings. In addition, we look at lifts of cone metrics that induce holomorphic 1-forms on their coverings and lastly we study Wronski metrics to find Weierstrass points on the underlying surfaces. We will also be able to locate the Weierstrass points on the polyhedral surfaces in Euclidean space. In Section~\ref{ch4 reg tpps}, we broaden the classification of infinite regular polyhedral surfaces to find more examples alike the Octa-4. We discuss triply periodic symmetric graphs of lower genera and formulate a decoration of a graph as a method to construct a triply periodic polyhedral surface from a triply periodic graph. We end the section with classification theorems, Theorem~\ref{thm: finite decorations3} and Theorem~\ref{thm: finite decorations4}, that find all genus three and four triply periodic regular polyhedral surfaces that arise as decorations of graphs. Finally, in Section~\ref{ch5 main result}, we will study examples from Theorem~\ref{thm: finite decorations3} and Theorem~\ref{thm: finite decorations4} whose cone metrics on their underlying surfaces have realizations as cyclically branched covers over punctured spheres. Results include examples that shed new light on existing minimal or algebraic surfaces, such as the Schwarz minimal P-, D-surfaces, Fermat's quartic, Schoen's minimal I-WP surface (Theorem~\ref{thm: iwp}), and Bring's curve (\cite{weber2005kepler} and Theorem~\ref{thm: truncated octa8}). \\

The author would like to thank Matthias Weber, for his insight and support on this project. In addition, the author would like to thank Bruce Solomon, Kevin Pilgrim, Matt Bainbridge, and Dylan Thurston, for their advice and feedback while completing this paper. 

\section{A Regular Triply Periodic Polyhedral Surface}
\label{ch2 octa-4} 
Here we summarize \cite{lee2017triply} for the readers' convenience. This surface arises as the boundary of a triply periodic polyhedron achieved by gluing regular octahedra in $\R^3$ periodically. We denote the polyhedral surface by $\Pi$ and show that it has no self-intersection. We also show that the surface is triply periodic and that its compact quotient is a genus three Riemann surface. We denote the quotient surface by $X.$ Due to the many symmetries of $X,$ we identify its conformal structure as an eightfold cyclically branched covering over a thrice-punctured sphere and find cone metrics on $X$ that are induced from $\Pi.$ Later, we will study cone metrics that induce holomorphic 1-forms on $X,$ then find an algebraic description of $X.$ We show that the surface is non-hyperelliptic and particularly that there is no triply periodic minimal surface whose underlying structure is conformally equivalent to Octa-4 (\cite{meeks1990theory}). Furthermore, we will show that the algebraic equation of $X$ represents Fermat's quartic. Lastly, we will find the automorphism group that acts transitively on $X.$\\

First, we construct a triply periodic polyhedron by gluing regular octahedra in a periodic manner. We begin with regular octahedra, all of the same size. We label them as either a Type A or a Type B octahedron. We choose a pair of opposite faces on each Type A octahedron, and four faces that are pairwise non-adjacent on each Type B octahedron. We glue the octahedra to each other while alternating the types, allowing gluing only along the chosen faces. That is, we glue Type A octahedra to four non-adjacent faces of a Type B octahedron and Type B octahedra to a pair of opposite faces of a Type A octahedron (Figure \ref{octa4-5}). We name the boundary of this polyhedron the Octa-4 surface and denote it by $\Pi.$ Its name is due to the number of Type A octahedra we attach to each Type B octahedron (Definition \ref{defn: genus graph}). We also point out that $\Pi$ is a regular polyhedral surface tiled by regular triangles, eight at each vertex.

\begin{figure}[htbp] 
\centering
\begin{minipage}{.5\textwidth}
	\centering
	\includegraphics[width=2.5in]{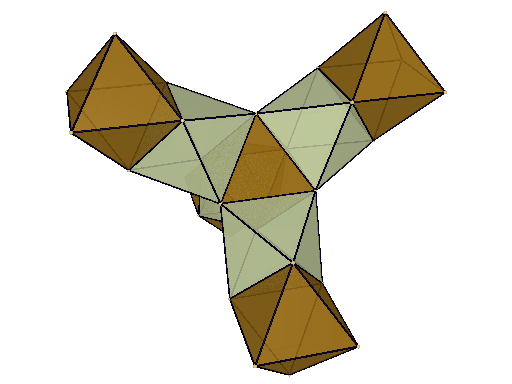}
\end{minipage}%
\begin{minipage}{.5\textwidth}
	\centering
	\includegraphics[width=2.5in]{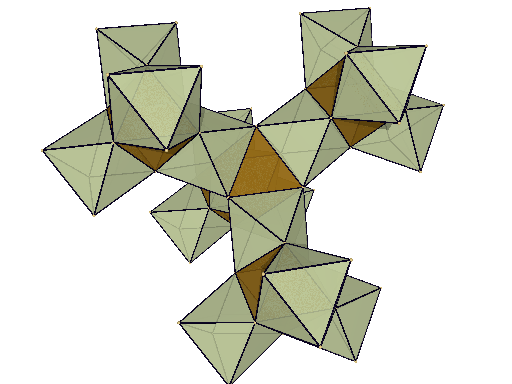}
\end{minipage}
	\caption{Geometric construction of $\Pi$}
	\label{octa4-5}
\end{figure}

\begin{thm*} [\cite{lee2017triply}]  The Octa-4 surface has no self-intersection. Furthermore, it is invariant under three independent translations in $\R^3.$
\end{thm*}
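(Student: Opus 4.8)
The plan is to make the gluing construction fully explicit in coordinates and then read off both claims from that model. First I would fix a regular octahedron with vertices at $\pm e_1, \pm e_2, \pm e_3$, whose eight faces sit one in each octant. The four faces lying in octants with an even number of minus signs are pairwise non-adjacent (they are the faces of an inscribed regular tetrahedron), so I take these to be the four glued faces of a Type B octahedron, placed with its center at the origin. The centroid of such a face is $\tfrac13 v$ for $v$ one of the four tetrahedral directions $v_1 = (1,1,1)$, $v_2 = (-1,-1,1)$, $v_3 = (-1,1,-1)$, $v_4 = (1,-1,-1)$, and a second congruent octahedron glued across that face necessarily has its center at $\tfrac23 v$: its center lies on the face normal at the reflected distance, independent of the rotational freedom in the gluing. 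This locates the four adjacent Type A octahedra.

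Next I would track the octahedra along the chains. A Type A octahedron centered at $\tfrac23 v$ is glued to the central Type B across the face with centroid $\tfrac13 v$ and carries a parallel, opposite chosen face with centroid $v$; the next Type B is glued across it, with center $\tfrac43 v$. Because an octahedron is centrally symmetric while its two inscribed tetrahedra of chosen faces are interchanged by the central inversion, the chosen faces of this next Type B point in the directions $-v_1, \ldots, -v_4$ rather than $v_1, \ldots, v_4$; continuing one more step along any of these returns to an octahedron whose decoration agrees with the central one. I therefore obtain genuine translational symmetries of the decorated structure from two-step chains, for instance $w_i = \tfrac43(v_1 - v_{i+1})$ for $i = 1, 2, 3$. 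Since $v_1 - v_2 = (2,2,0)$, $v_1 - v_3 = (2,0,2)$, $v_1 - v_4 = (0,2,2)$ and
$$\det\begin{pmatrix} 2 & 2 & 0 \\ 2 & 0 & 2 \\ 0 & 2 & 2 \end{pmatrix} = -16 \neq 0,$$
these are three independent translations, which settles the periodicity statement.

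For the embedding claim I would use periodicity to reduce to a finite problem: a fundamental domain for the lattice $\Lambda = \langle w_1, w_2, w_3 \rangle$ meets only finitely many octahedra, so it suffices to show that any two octahedra in the arrangement have disjoint interiors and intersect only along a shared glued face (or in a common edge or vertex). I would verify this locally, checking the configuration of octahedra around each shared face, edge, and surface vertex --- recalling that eight triangles meet at every vertex of $\Pi$ --- and confirm that consecutive octahedra splay apart rather than overlap, the dihedral angle $\arccos(-\tfrac13)$ of the octahedron being exactly what forces the surface to curve and form handles rather than tile space. Once the solid union $U = \bigcup(\text{octahedra})$ is shown to be an embedded $3$-manifold with boundary, $\Pi = \partial U$ is an embedded surface with no self-intersection.

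The hard part will be the embedding verification: unlike the periodicity, which follows formally once the translations are exhibited, ruling out self-intersection requires controlling how the tetrahedrally branching chains of octahedra interact globally. The periodic structure reduces this to finitely many pairs, but the tight fit caused by the non-space-filling dihedral angle means each near-contact must be examined to confirm that free faces of distinct octahedra do not interpenetrate; the consistency of the orientation alternation along the four chain directions is what guarantees that these finitely many local checks assemble into a globally embedded surface.
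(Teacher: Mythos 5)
The paper never proves this statement itself; it is imported verbatim from \cite{lee2017triply}, so your proposal has to be judged on its own terms and against the argument that the paper attributes to that reference elsewhere (in the proof of Theorem~\ref{thm: finite decorations3}: ``we can also inscribe a tetrahedron in a cube as we do an octahedron in \cite{lee2017triply}'', and in Section~\ref{sec: (1,2,4,3)}: ``we again use the fact that cubes tile space''). On that basis, your treatment of the \emph{periodicity} half is correct and essentially complete. The coordinates check out: the neighbor glued across the face with centroid $\tfrac13 v$ is the mirror image through that face plane and so has center $\tfrac23 v$; the only way to choose four pairwise non-adjacent faces of an octahedron containing a prescribed face is to take its whole tetrahedral class, so the decoration of each successive Type B octahedron is forced and alternates between the two classes exactly as you claim; two gluing steps compose two reflections in parallel planes, hence give a genuine translation; and since the entire decorated configuration propagates deterministically from a single decorated Type B, any translation matching decorated Type B's is a symmetry of the whole structure. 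The vectors $\tfrac43(v_1-v_j)$ are such translations, and your determinant computation shows they are independent.

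The gap is the other half of the statement --- no self-intersection --- which is also the harder half, and your proposal does not prove it: ``I would verify this locally'' and ``each near-contact must be examined'' is a plan, not an argument. These finitely many checks are genuinely nontrivial. For a concrete instance in your own coordinates: two Type A octahedra attached to two (necessarily non-adjacent) chosen faces of the same Type B share a vertex of that Type B, and their centers lie only $\tfrac43\sqrt2\approx 1.89$ apart while each has circumradius $1$, so their circumballs overlap and no coarse separation estimate suffices; one must actually prove the two solids meet in exactly that single point. There is also a prior well-definedness issue that your outline assumes away: distinct gluing chains reach the same site (the first coincidence already occurs at chain length three, e.g.\ $\tfrac43(v_1-v_2+v_3)$), and one must check that all chains arriving there place the same octahedron with the same decoration before ``the arrangement'' whose embeddedness you want to verify even exists --- your coordinate model can deliver this (position plus parity of the Type B coset determines everything), but the proposal never addresses it. The cited proof avoids all of this case analysis by inscribing the solids in cells of a space-filling arrangement of cubes, so that distinct solids have disjoint interiors by construction; that organizing idea, or else an actually completed local analysis, is what is missing before the embedding claim can be considered proved.
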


Next we look into the abstract genus three Riemann surface $X := \Pi / \Gamma,$ where $\Gamma$ is a rank-three lattice in $\R^3.$ We will put a conformal structure on $X$ induced by the polyhedral structure on $\Pi.$ This allows us to look at not only Euclidean symmetries on $\Pi$ but also hyperbolic symmetries on $X.$ This allows us to find translational structures on $X,$ which are geometric representations of holomorphic 1-forms induced from cone metrics.\\

As all triangles on $\Pi$ are regular and all vertices on $\Pi$ are octavalent, we map a Euclidean triangle in $\Pi$ to a hyperbolic $(\frac{\pi}{4}, \frac{\pi}{4}, \frac{\pi}{4})$-triangle in the hyperbolic disk. Then, by the Schwarz reflection principle, we get a hyperbolic tessellation of the hyperbolic disk. As the fundamental piece consists of 32 triangles that come from four Type A octahedra and two Type B octahedra, the hyperbolic 16-gon bounded by bold lines in Figure~\ref{hyperbolic_tiling} indicates the hyperbolic fundamental piece of $X.$ To pin down the identification of edges of the 16-gon, we use the definition of Petrie polygons from \cite{alexandrov_intrinsic}.

\begin{defn}\label{defn: petrie} A Petrie polygon of a regular tiling is an infinite regular skew polygon that turns alternately to the left and right. 
\end{defn}

Figure~\ref{fig: petrie} shows that a Petrie polygon turns alternately to the left and right when it meets a vertex on the square, hexagon, triangular tiling respectively. 

\begin{figure}[htbp] 
\centering
\begin{minipage}{.5\textwidth}
	\centering
	\includegraphics[width=2.5in]{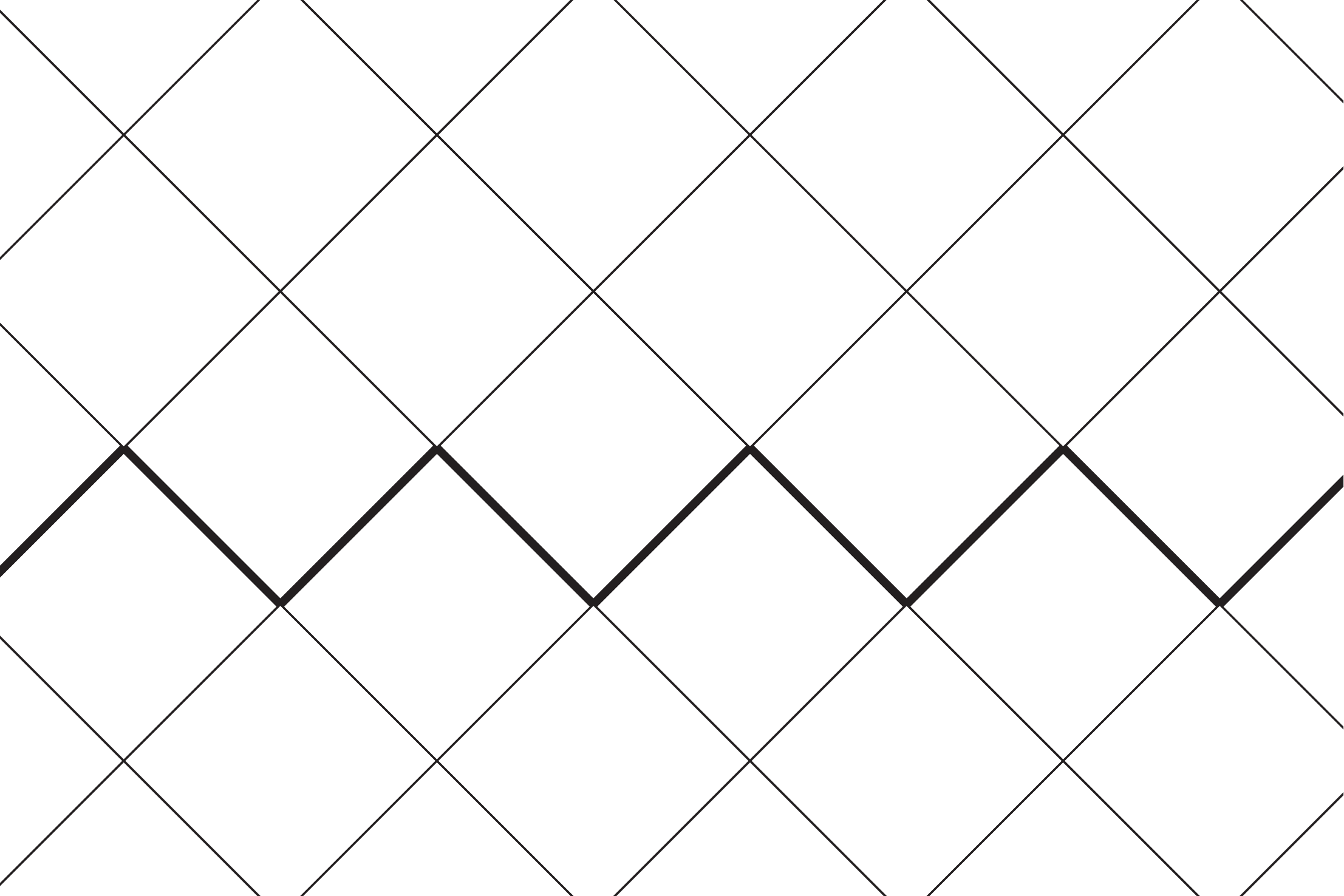}
\end{minipage}%
\begin{minipage}{.5\textwidth}
	\centering
	\includegraphics[width=2.5in]{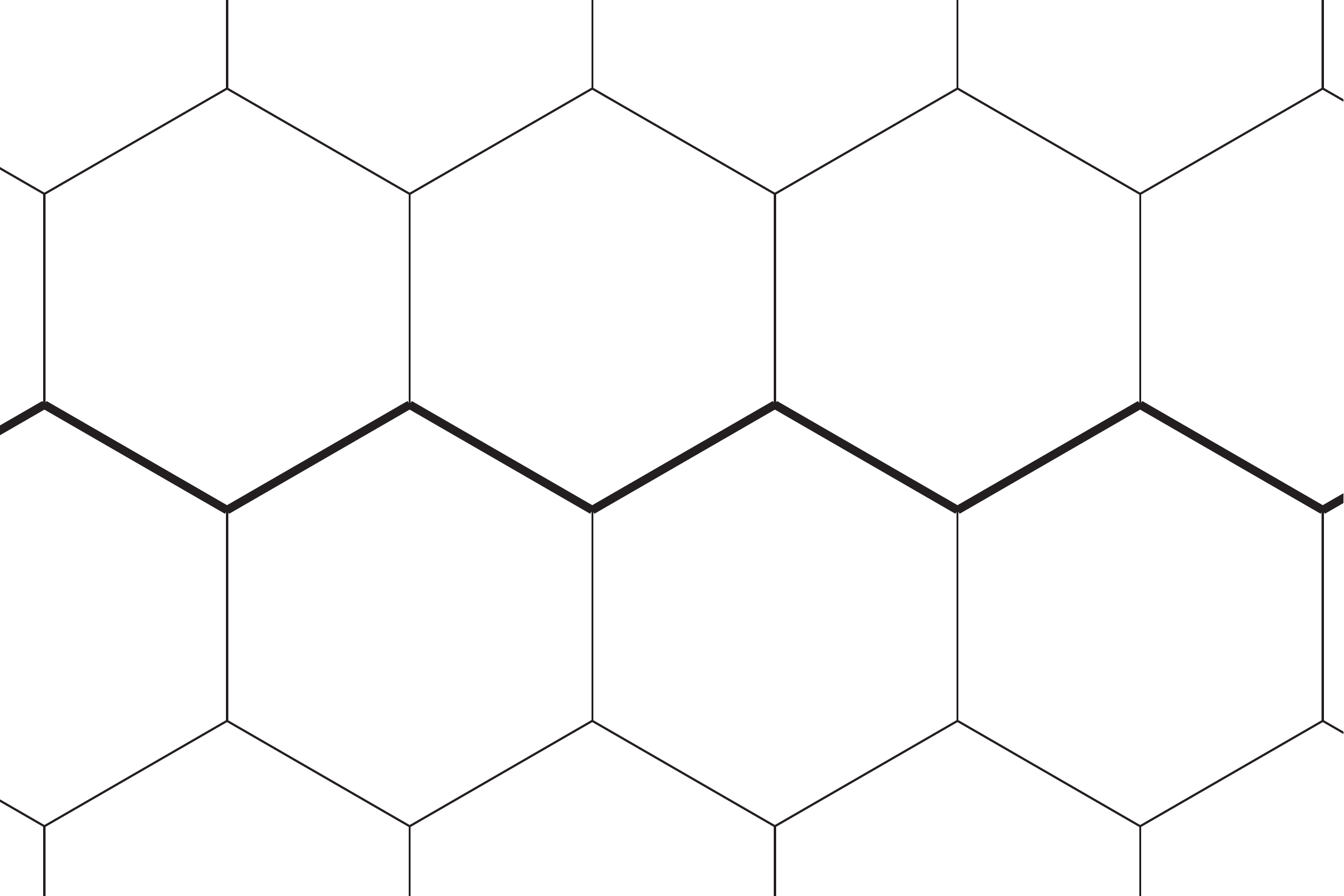}
\end{minipage}
	\caption{Examples of Petrie polygons}
	\label{fig: petrie}
\end{figure}

On $\Pi,$ we find Petrie polygons that go through the midpoints of edges instead of the vertices. Their images on the hyperbolic disk via the previous mapping also correspond to hyperbolic geodesics on $X.$ In either case, all Petrie polygons become closed after passing through six triangles. This shows that translations along the Petrie polygons on $X$ can be used to find the identification of the hyperbolic 16-gon, which are marked as dotted lines in Figure~\ref{hyperbolic_tiling}. It also shows that there is an order-eight rotation about any vertex on $X$ that preserves the identification of edges. This rotation is not induced from any Euclidean symmetry on $\Pi.$ 

\begin{figure}[htbp] 
   \centering
   \includegraphics[width=2.5in]{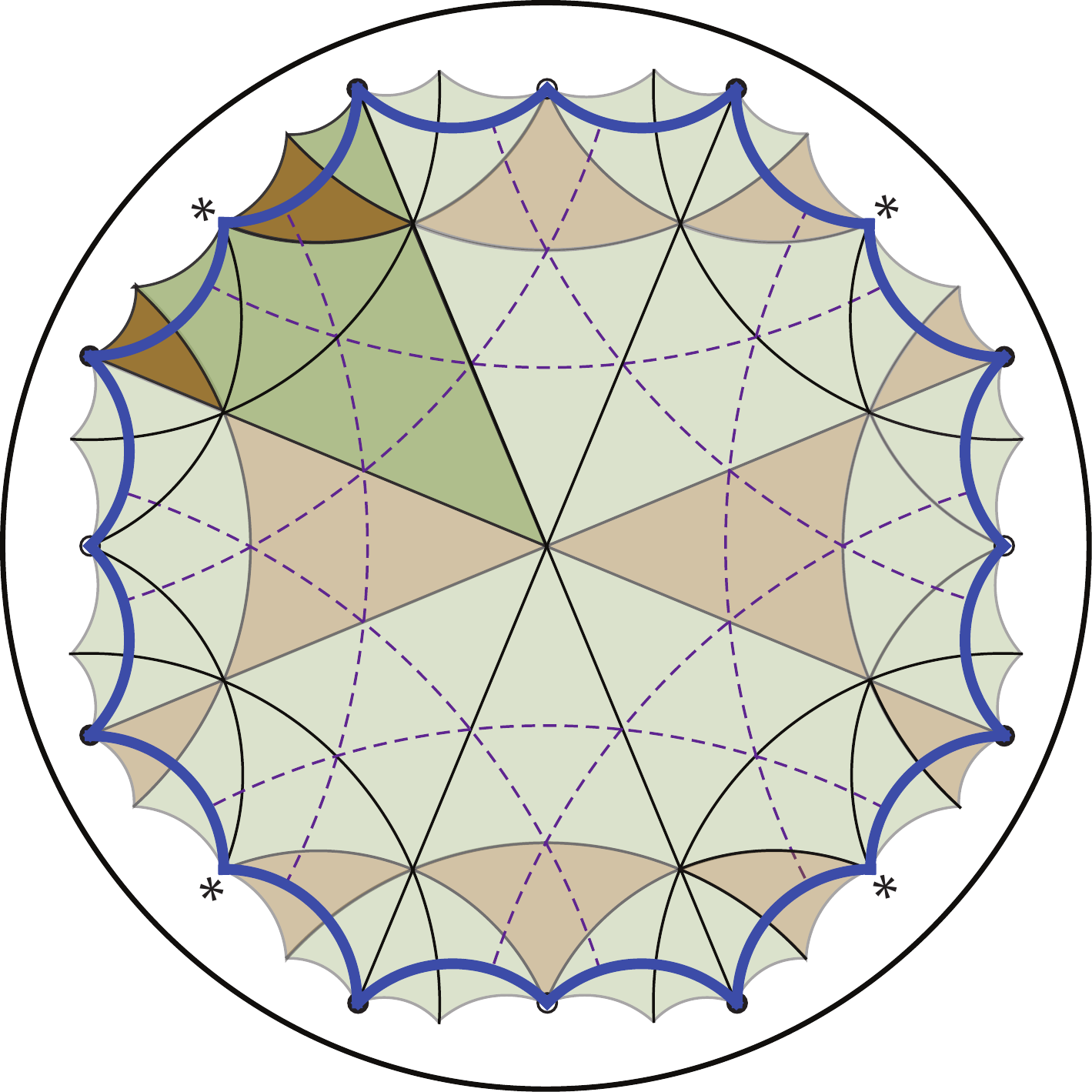} 
   \caption{Hyperbolic description of $X$}
   \label{hyperbolic_tiling}
\end{figure}

\begin{defn} \label{defn: cyc br cov} A branched covering $X \rightarrow Y$ is called a cyclically branched covering if $Y = X / (\mathbb{Z} / n \mathbb{Z})$ for some $n \in \mathbb{Z}.$
\end{defn}

The following theorem was shown as a remark in \cite{lee2017triply}.

\begin{thm*} [\cite{lee2017triply}]  
\label{thm: octa4_cycbr} $X$ is an eightfold cyclically branched cover over a thrice punctured sphere. 
\end{thm*}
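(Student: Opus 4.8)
The plan is to exhibit the $\mathbb{Z}/8\mathbb{Z}$-action concretely, locate its fixed points, and then let Riemann--Hurwitz force both the genus of the quotient and the number of branch points. First I would let $\sigma$ denote the order-eight rotation about a chosen vertex $v_0$ produced above; since it preserves the hyperbolic tessellation of $X$ together with the edge identifications of the fundamental $16$-gon, it descends to a conformal automorphism with $\langle\sigma\rangle\cong\mathbb{Z}/8\mathbb{Z}$. By Definition~\ref{defn: cyc br cov} it then remains to show that $Y:=X/\langle\sigma\rangle$ is conformally $\mathbb{CP}^1$ and that $X\to Y$ branches over exactly three points.

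Second I would record the combinatorics of the tiling. As $X$ is tiled by equilateral triangles with eight meeting at each vertex, the incidence relations $8V=2E$ and $3F=2E$, together with $\chi(X)=-4$ for genus three, give $V=12$, $E=48$, $F=32$, the last matching the $32$ triangles of the fundamental piece.

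Third --- the crux --- I would pin down the fixed points of $\sigma$ and of its powers. I expect $\sigma$ to fix $v_0$ and exactly one further vertex $v_1$, each a ramification point of index eight, and to have a single two-element vertex orbit stabilized by $\sigma^2$, contributing two ramification points of index four, with the remaining eight vertices forming one free orbit; no face center or edge midpoint can be fixed by $\sigma$ or $\sigma^2$, since those points carry only an order-three or order-two local rotation. These branch points alone contribute $(8-1)+(8-1)+2\cdot(4-1)=20$ to the ramification term $R$ in $2g_X-2=8(2g_Y-2)+R$. Reading this relation as $16g_Y+R=20$ with $g_Y\ge 0$, the lower bound $R\ge 20$ just obtained forces $R=20$ and $g_Y=0$; in particular there is no further ramification (so $\sigma^4$ fixes no edge midpoint, which would otherwise push $R\ge 24$). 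Hence $Y\cong\mathbb{CP}^1$, and the three branch points, of orders $8,8,4$, are precisely the locus over which $X$ is an eightfold cyclic cover of a thrice-punctured sphere.

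The main obstacle is the fixed-point computation in the third step, specifically establishing the existence of the second order-eight vertex $v_1$ and of the order-four orbit. These are invisible in the hyperbolic disk, where $\sigma$ fixes only the central lift of $v_0$: a point of $X$ is fixed by a power of $\sigma$ precisely when the corresponding twist by a deck transformation is elliptic, so identifying $v_1$ and the two-element orbit requires tracking the $\mathbb{Z}/8\mathbb{Z}$-action directly against the explicit edge pairings of the $16$-gon rather than reading it off the figure. Once these orbits are exhibited, the Riemann--Hurwitz bound above closes the argument with no further case analysis.
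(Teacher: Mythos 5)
Your overall route is the same as the paper's: both arguments apply Riemann--Hurwitz to the quotient by the order-eight rotation, with ramification data consisting of two fixed points of local degree eight and one two-point orbit of local degree four, giving total ramification $R=20$ and hence genus zero for the quotient. The difference lies in how that data is obtained. The paper reads it off the explicit edge identifications of the hyperbolic $16$-gon (the points marked $\bullet$, $\ast$, $\circ$ in Figure~\ref{hyperbolic_tiling}), which were determined earlier via the Petrie-polygon analysis; you instead only assert the structure (``I expect $\sigma$ to fix\dots''), and you say yourself that exhibiting the second fixed vertex $v_1$ and the order-four orbit is the main obstacle and is left undone. As written, the lower bound $R\ge 20$ --- the hinge on which your whole argument turns --- is therefore unsupported: nothing in your second step (the count $V=12$, $E=48$, $F=32$) or in your local-rotation remark about face centers and edge midpoints produces a second fixed vertex, so this is a genuine gap.

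The gap can, however, be closed inside your own framework, and more cheaply than either you or the paper proceeds. You know one fixed point for free: $v_0$ is fixed by construction of $\sigma$, so $R\ge 7$. Feeding this into your identity $16g_Y+R=20$ already forces $g_Y=0$ and $R=20$. Now decompose $R$ by orbits: since $\mathbb{Z}/8\mathbb{Z}$ is abelian, all points of $X$ lying over a given branch point share a stabilizer of some order $m\in\{2,4,8\}$, and such an orbit contributes $\frac{8}{m}(m-1)=8-\frac{8}{m}\in\{4,6,7\}$ to $R$. The branch point below $v_0$ contributes $7$, and the only way to write the remaining $13$ as a sum of terms from $\{4,6,7\}$ is $7+6$. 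Hence there are exactly three branch points, with local degrees $8$, $8$, $4$ --- precisely the structure you guessed, now derived rather than assumed. This arithmetic also subsumes your separate worry about $\sigma^4$ fixing edge midpoints, since any unnoticed ramification would force $R>20$.
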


\begin{proof} 
We use the Riemann-Hurwitz formula. We label the rotation about the center of the tessellation by $a.$ The center of the tessellation is fixed, hence the branching order is 7. Another fixed point is marked as $\bullet$ on Figure \ref{hyperbolic_tiling}. There are also two points that have orbits of length four, marked as $\ast$ and $\circ.$ The branching order is three at the two points. By the Riemann-Hurwitz formula, we have $$2 - 2 \cdot 3 = 8 (2 - 2 g_q) - \sum (7 + 3 + 3 + 7)$$ where $g_q$ is the genus of the quotient surface $X / \langle a \rangle.$ Then $g_q = 0.$ Since $\langle a \rangle$ is cyclic, we say that $X$ is a cyclically branched cover over a sphere. 
\end{proof}

The previous theorem shows an order-eight symmetry of $X.$ The following theorem finds all hyperbolic isometries of $X.$

\begin{thm*} [\cite{lee2017triply}] Let $a$ be the counterclockwise order-eight rotation that fixes a vertex of the hyperbolic tessellation on $X$ and $b$ be the counterclockwise order-three rotation that fixes a hyperbolic triangle on $X.$ Then, the group of orientation preserving isometries is generated by $a$ and $b.$ Its presentation is $\textrm{Aut}(X) = \langle a, b \mid a^8 = b^3 = (a b)^2 = (a^2 b^2)^3 = (a^4 b^2)^3 = 1 \rangle.$ Specifically, $|\textrm{Aut}(X)| = 96.$
\end{thm*}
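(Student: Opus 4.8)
The plan is to regard $X$, together with its geodesic tessellation by $(\frac{\pi}{4},\frac{\pi}{4},\frac{\pi}{4})$-triangles, as a regular map of type $\{3,8\}$ and to read off both the order and the presentation of $\mathrm{Aut}(X)$ from that map. First I would record the combinatorics: the surface is tiled by $32$ triangles with eight meeting at each vertex, so counting incidences gives $F=32$ faces, $E=48$ edges and $V=12$ vertices, consistent with $V-E+F=-4=2-2g$ at $g=3$. The basic objects are the $2E=96$ directed edges (darts). In this language $a$ is the rotation cycling the eight darts issuing from a fixed vertex and $b$ the rotation cycling the three darts of an incident face, so $a^8=b^3=1$; and since the fixed vertex lies on the fixed triangle, the product $ab$ is the half-turn about the midpoint of their common edge, giving $(ab)^2=1$.

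I would obtain the order and the generation statement together. An orientation-preserving isometry of $X$ fixing a dart fixes a nondegenerate geodesic segment pointwise, hence is the identity, so $\langle a,b\rangle$ acts freely on the $96$ darts; since $a$ is transitive on the darts at a vertex, $b$ on those of a face, and the $1$-skeleton is connected, the action is also transitive. Therefore $|\langle a,b\rangle|=96$. Now $\langle a,b\rangle\le\mathrm{Aut}(X)$, while the Hurwitz bound gives $|\mathrm{Aut}(X)|\le 84(g-1)=168$ because $g=3$. As $96$ divides $|\mathrm{Aut}(X)|\le 168$ and $2\cdot 96=192>168$, the only possibility is $|\mathrm{Aut}(X)|=96$; hence $\mathrm{Aut}(X)=\langle a,b\rangle$, which settles both the generation claim and the order.

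For the presentation I would first confirm that all five relators vanish as isometries. The triangle-group relators $a^8,b^3,(ab)^2$ are immediate from the previous step, and the two extra relators I would verify by tracking the composite isometries on the tessellation of Figure~\ref{hyperbolic_tiling}: one checks that $a^2b^2$ and $a^4b^2$ are each elliptic of order three, so that each fixes the centre of some triangular face (the only points of the map with three-fold symmetry). These relators record the edge identifications of the hyperbolic $16$-gon and the closing-up of the Petrie polygons after six triangles. Writing $\Lambda$ for the abstract group on the five relators, their validity yields a surjection $\Lambda\twoheadrightarrow\mathrm{Aut}(X)$, so it remains only to prove $|\Lambda|\le 96$. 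This I would do by a Todd–Coxeter coset enumeration of $\langle a\rangle$ in $\Lambda$: the enumeration should close on exactly $12$ cosets, one per vertex, giving $|\Lambda|=8\cdot 12=96$. Together with the surjection onto the order-$96$ group $\mathrm{Aut}(X)$ this forces $\Lambda\cong\mathrm{Aut}(X)$.

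The hardest part is this last paragraph, in two senses. Conceptually, one must read the correct extra relators off the geometry: the infinite $(2,3,8)$ triangle group already satisfies $a^8=b^3=(ab)^2=1$, so everything rests on isolating the finitely many further identifications that collapse it to precisely this genus-three quotient and not to some other one. Computationally, the crux is the enumeration establishing $|\Lambda|\le 96$; the reverse inequality is automatic from the surjection onto $\mathrm{Aut}(X)$, whose order the Hurwitz argument already fixed at $96$. As a consistency check, the resulting group of order $96=2^5\cdot 3$ agrees with the known automorphism group of Fermat's quartic, to which $X$ is conformally equivalent by the theorem quoted above.
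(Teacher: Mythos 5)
A preliminary remark on the comparison itself: the paper never proves this theorem --- it is stated as a quotation from \cite{lee2017triply}, and no argument for it appears anywhere in the text --- so your proposal can only be measured against the statement, not against an in-paper proof.

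Your first two paragraphs are a correct and complete proof of the generation claim and of $|\mathrm{Aut}(X)|=96$. The incidence counts ($V=12$, $E=48$, $F=32$, hence $96$ darts) are right; an orientation-preserving isometry fixing a dart has identity differential at a point and is therefore the identity, so $\langle a,b\rangle$ acts freely on darts; transitivity follows by the standard (if slightly compressed in your write-up) conjugation argument, since $a^kba^{-k}$ realizes the rotations about all eight faces around the fixed vertex and connectedness of the $1$-skeleton lets you propagate to every vertex; and the pincer ``$96$ divides $|\mathrm{Aut}(X)|\le 84(g-1)=168<192$'' settles the order and the generation statement simultaneously. This is clean, self-contained, and uses only facts the paper has already established (the existence of $a$ and $b$ as isometries, and $g=3$).

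The genuine gap is in the presentation claim, which as written is a program rather than a proof --- and, as you yourself concede, that is where the real content lies. Two decisive verifications are asserted but never performed. First, that $(a^2b^2)^3=(a^4b^2)^3=1$ actually hold in $\mathrm{Aut}(X)$: the phrase ``one checks that $a^2b^2$ and $a^4b^2$ are each elliptic of order three'' is exactly the computation that has to be traced through the edge identifications of the hyperbolic $16$-gon, and nothing you have written rules out that these words have some other order or are fixed-point free; without this, the surjection $\Lambda\twoheadrightarrow\mathrm{Aut}(X)$ on which everything rests is not established. Second, the bound $|\Lambda|\le 96$: this rests entirely on the claim that a Todd--Coxeter enumeration of the cosets of $\langle a\rangle$ in $\Lambda$ ``should close on exactly $12$ cosets,'' which is precisely the step you do not carry out. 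Your surrounding logic is sound --- once the relators are verified and the enumeration closes, the surjection onto a group of order $96$ forces $\Lambda\cong\mathrm{Aut}(X)$ --- and both missing items are finite mechanical checks that would in fact succeed, but as submitted the presentation part of the theorem remains unproved.
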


\section{Cyclically Branched Covers Over Spheres}
\label{ch3 cyc br cov} 
In the previous section, we looked at an example of a regular triply periodic polyhedral surface. Due to its many symmetries on the underlying surface, we were able to identify its hyperbolic structure as a cyclically branched cover over a thrice-punctured sphere. This motivates us to find other cyclically branched covers over punctured spheres which have the same structure as compact quotients of triply periodic polyhedral surfaces. First we begin with the abstract construction and define a cyclically branched covering over a punctured sphere. We show that such a covering is uniquely defined up to homeomorphism. We also prove a finiteness theorem given the genus of the covering and the number of punctures on the 2-spheres. Then we study automorphisms on cyclically branched coverings by studying maps on punctured spheres and looking at their lifts. Next we seek a basis of holomorphic 1-forms on the coverings that arise from cone metrics on the quotient surfaces. We find holomorphic 1-forms on the coverings, as pullbacks of cone metrics on the quotients. In particular, this produces a basis of holomorphic 1-forms on the covering. Lastly, we search for Weierstrass points on the surfaces. They arise as points where the dimension count of the Riemann-Roch theorem is not generic. Moreover, they are permuted by automorphisms hence allow one to distinguish Riemann surfaces. Without the automorphisms, one can rarely locate all Weierstrass points on a given surface. However, we look into the Wronski metric that reveals their location. 

\subsection{Construction of cyclically branched covers over spheres}
\label{sec: construction}
In this section, we will demonstrate how to construct a $d$-fold cyclically branched covering over an $n$-punctured sphere. We will show that these covering surfaces are uniquely defined up to homeomorphism. We will also prove a finiteness theorem that says given $g,$ there only finitely many surfaces $X$ so that $X$ is a cyclically branched covering over a punctured sphere and $\textrm{genus}(X) = g.$\\

Let $p_1, \ldots , p_n$ be $n$ distinct points on a 2-sphere and let $q \in \mathbb{S}^2 \setminus \{p_1, \ldots , p_n\}$ be a base point. We denote $\mathbb{S}^2 \setminus \{p_1, \ldots , p_n\}$ by $Y.$ We take branch cuts $\gamma_i$ to be simple curves from $q$ to $p_i$ so that $\gamma_i$ are mutually disjoint. For each $i,$ we label the left side of $\gamma_i$ by $\gamma_i^l$ and the right side of $\gamma_i$ by $\gamma_i^r.$ To each $p_i,$ we assign a positive integer $d_i$ which we call the \textit{branching index.} The branching index indicates how the branching occurs around $p_i.$ Denote the degree of the covering map by $d$ so that we have sheets $Y_1,$ $\ldots ,$ $Y_d.$ Without loss of generality, we consider $Y_1$ and $p_1.$ We identify $\gamma_1^l$ of $Y_1$ with $\gamma_1^r$ of $Y_{1 + d_1 \pmod d}.$ Similarly, we identify $\gamma_1^l$ of $Y_{1 + d_1 \pmod d}$ with $\gamma_1^r$ of $Y_{1 + 2 d_1 \pmod d},$ and so on. Likewise, for any $i$ and $j$ we identify $\gamma_i^l$ of $Y_j$ with $\gamma_i^r$ of $Y_{j + d_i \pmod d}.$ It follows from the construction that we need to only consider $d_i \in \{1, \ldots , d - 1\}.$ In the following theorem, we show a necessary and sufficient condition on the branching indices for the covering to be well-defined. We also show that the covering surface is branched only at $p_i.$

\begin{thm} \label{thm: wdefd cyc br cov} Given an $n$-punctured sphere and branching indices $(d_1, \ldots , d_n),$ a $d$-fold cyclically branched cover over the $n$-punctured sphere is a closed surface if and only if $\sum\limits_{i=1}^n d_i \equiv 0 \pmod d.$ 
\end{thm}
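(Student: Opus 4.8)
The plan is to read off the local structure of the glued-up surface at each of the finitely many points that are not manifestly regular, namely the endpoints of the branch cuts. Away from the cuts and their endpoints, the projection to $\mathbb{S}^2$ restricts to a trivial $d$-sheeted covering on the interior of each sheet, so no verification is needed there. The first routine observation is that every cut-edge is matched to exactly one other: for fixed $i$, the rule $\gamma_i^l(Y_j) \sim \gamma_i^r(Y_{j+d_i})$ pairs the $d$ left-copies of $\gamma_i$ bijectively with the $d$ right-copies, since $j \mapsto j + d_i$ is a bijection of $\mathbb{Z}/d\mathbb{Z}$. Hence the gluing leaves no free edge, and the only possible failures of the closed-cover property are concentrated at the cut endpoints $p_1, \ldots, p_n$ and at their common endpoint $q$.

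Next I would dispose of the points $p_i$. A small loop encircling $p_i$ crosses only the single cut $\gamma_i$, so its monodromy is the cyclic shift $\sigma_i : j \mapsto j + d_i \pmod d$. The preimage of a punctured disk about $p_i$ therefore splits into $\gcd(d, d_i)$ punctured disks, each a $\tfrac{d}{\gcd(d,d_i)}$-fold cyclic cover of the punctured disk; every such end is a standard cone and can be filled by a single point. Thus the construction always extends across the $p_i$, and these points impose no condition.

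The crux is the common endpoint $q$, and this is where I expect the real work to lie. Since all $n$ cuts emanate from $q$, a small loop around $q$ crosses $\gamma_1, \ldots, \gamma_n$ in turn, so its monodromy is the composite $\sigma_1 \sigma_2 \cdots \sigma_n$, the shift $j \mapsto j + \sum_{i=1}^n d_i \pmod d$. Concretely, I would fix an orientation convention (cuts oriented outward from $q$, with left $=$ counterclockwise side), label the $n$ sectors cut out around $q$ in each sheet, and check that crossing successive edges advances the sheet index by the successive $d_i$; iterating once around $q$ then returns to the starting sector-type in sheet $j - \sum d_i$. Consequently the chain of sectors about a preimage of $q$ closes up after exactly $n$ sectors, giving a regular point of cone angle $2\pi$, if and only if $\sum_{i=1}^n d_i \equiv 0 \pmod d$. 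When this holds, $q$ lifts to $d$ regular points and the projection is branched only over the $p_i$, so filling in the $p_i$ yields a closed surface; when it fails, the composite monodromy is a nontrivial shift, $q$ becomes a branch point lying in $Y$, and the projection is no longer an honest branched cover of the punctured sphere, so the construction does not produce the desired closed covering.

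The main obstacle is the bookkeeping in the last step: correctly tracking the left/right sides and the cyclic order of the cuts around $q$ so that the sector-to-sector identifications compose to the shift by $\sum d_i$ with the correct sign. Once these conventions are pinned down, the statement that the link of $q$ closes up precisely when $d \mid \sum d_i$ is immediate, and it furnishes both the necessary and the sufficient direction sought.
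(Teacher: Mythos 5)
Your proposal is correct and follows essentially the same route as the paper: both arguments reduce the question to monodromy, observing that loops around each $p_i$ produce only standard cone points (so impose no condition), while the loop around the common endpoint $q$ crosses every cut once and has monodromy the shift by $\sum_i d_i$, which must be trivial mod $d$ for the covering to close up without branching over $q$. Your sector-by-sector analysis of the link of $q$ is a more explicit rendering of the paper's statement that the lift of a simple closed curve around $q$ is closed if and only if $\sum_{i=1}^n d_i \equiv 0 \pmod d$.
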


\begin{proof} Let $Y$ be an $n$-punctured sphere and let $X \rightarrow Y$ be a $d$-fold covering. For the covering to be cyclic (Definition \ref{defn: cyc br cov}), we must show that there exists an order-$d$ cyclic map that preserves $X.$ As $Y_j$ was picked arbitrarily in the construction of the covering, a map that sends $Y_j$ to  $Y_{j + 1}$ is such a map. \\

Then we check the lifting properties. A positively oriented (counterclockwise) simple closed curve around $p_1 \in Y_j$ (and no other $p_i$ on $Y_j$) lifts to a closed curve that passes through $Y_j \rightarrow Y_{j + d_1 \pmod d} \rightarrow Y_{j + 2 d_1 \pmod d} \rightarrow \cdots \rightarrow Y_j.$ Regardless of the value of $d_1,$ we are back to $Y_j$ after $d$ steps. Lastly, we show that the covering map is not branched at any other point. Let $\gamma$ be a positively oriented simple closed curve around $q$ on $Y_j$ so that the intersection numbers $\iota(\gamma_i, \gamma) = 1$ for all $i.$ Since $\gamma$ crosses each branch cut exactly once, its lift is closed on the covering if and only if $\sum\limits_{i=1}^n d_i \equiv 0 \pmod d.$ 
\end{proof}

We denote the covering by $X.$ Now, we show that $X$ is uniquely defined up to homeomorphism and independent of $p_i,$ $q,$ or the branch cuts.

\begin{lemma} Given a $d$-fold cyclically branched covering over an $n$-punctured sphere with branching indices $(d_1, \ldots , d_n),$ a covering is uniquely defined up to homeomorphism. That is, the construction of $X$ is independent of $p_i,$ $q,$ or the branch cuts.
\end{lemma}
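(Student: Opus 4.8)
The plan is to recognize the construction in Theorem~\ref{thm: wdefd cyc br cov} as an explicit model for the covering associated to a single monodromy homomorphism, and then to invoke the classification of covering spaces, which makes the uniqueness automatic. The point is that the auxiliary data---the base point $q$, the locations of the punctures $p_i$, and the branch cuts $\gamma_i$---enter only through the choice of a presentation of $\pi_1(Y, q)$, whereas the homeomorphism type of $X$ depends solely on the resulting homomorphism $\pi_1(Y, q) \to \mathbb{Z}/d\mathbb{Z}$.

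First I would recall that for the $n$-punctured sphere $Y$, the group $\pi_1(Y, q)$ is generated by positively oriented simple loops $\sigma_1, \ldots, \sigma_n$ about $p_1, \ldots, p_n$, subject to the single relation $\sigma_1 \cdots \sigma_n = 1$, and that a choice of disjoint branch cuts $\gamma_i$ produces exactly such a generating set. The gluing rule in the construction---identifying $\gamma_i^l$ of $Y_j$ with $\gamma_i^r$ of $Y_{j + d_i}$---says precisely that $\sigma_i$ acts on the sheets by the cyclic shift $j \mapsto j + d_i \pmod d$. Thus the construction defines a homomorphism $\rho \colon \pi_1(Y, q) \to \mathbb{Z}/d\mathbb{Z}$ with $\rho(\sigma_i) = d_i$, and the closedness condition $\sum_i d_i \equiv 0 \pmod d$ from Theorem~\ref{thm: wdefd cyc br cov} is exactly the requirement that $\rho$ respect the relation $\sigma_1 \cdots \sigma_n = 1$.

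Next I would invoke the Galois correspondence for covering spaces: $d$-fold covers of $Y$ are classified up to isomorphism over $Y$ by conjugacy classes of homomorphisms $\pi_1(Y, q) \to S_d$, and our cover corresponds to $\rho$ followed by the left regular action of $\mathbb{Z}/d\mathbb{Z}$ on its $d$ sheets. Changing the base point replaces $\rho$ by a conjugate, which is harmless because the target is abelian; and since $\mathrm{Homeo}^+(\mathbb{S}^2)$ acts transitively on ordered $n$-tuples of distinct points, any two configurations of the $p_i$ are related by an ambient homeomorphism, which lifts to a homeomorphism of the covers. To pass from the punctured cover to the closed surface $X$, I would fill in each $p_i$ canonically: the loop $\sigma_i$ acts on the $d$ sheets with $\gcd(d, d_i)$ orbits of size $d/\gcd(d, d_i)$, so one adds $\gcd(d, d_i)$ cone points, at each of which the map is locally $z \mapsto z^{d/\gcd(d, d_i)}$. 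This completion is determined by $\rho$ alone.

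I expect the main obstacle to be the independence from the branch cuts, since a different admissible cut system yields a different generating tuple $(\sigma_1', \ldots, \sigma_n')$ for $\pi_1(Y, q)$, the two tuples being related by the braid group action (Hurwitz moves) on generators. The resolution is that $\rho$ factors through the abelianization $H_1(Y)$, and in $H_1(Y)$ the class of a small positively oriented loop about $p_i$ is an intrinsic generator, independent of any cuts (subject only to $\sum_i [\sigma_i] = 0$). Hence every admissible cut system sends the homology class around $p_i$ to the same element $d_i \in \mathbb{Z}/d\mathbb{Z}$, so all such systems define the very same $\rho$, and therefore the same cover. Once this intrinsic nature of the local monodromy is in hand, uniqueness up to homeomorphism follows formally from the covering-space dictionary.
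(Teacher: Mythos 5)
Your proposal is correct, and it is genuinely more than what the paper itself records. The paper's proof is a two-line assertion: it takes two puncture configurations, notes that the two punctured spheres $Y$ and $Y'$ are homeomorphic, and concludes that coverings built with the same indices are homeomorphic --- it never explicitly addresses independence of the base point $q$ or of the branch cuts, even though both are part of the lemma's claim. You instead recast the cut-and-paste construction as the cover attached to the monodromy homomorphism $\rho\colon \pi_1(Y,q)\to \mathbb{Z}/d\mathbb{Z}$, $\sigma_i\mapsto d_i$, and then let the covering-space dictionary do the work: base-point independence because conjugation dies in an abelian target, independence of puncture positions because $\mathrm{Homeo}^+(\mathbb{S}^2)$ acts transitively on ordered configurations, and --- the real content --- independence of the cut system because $\rho$ factors through $H_1(Y)$, where the class of a small positive loop about $p_i$ is intrinsic, so every admissible cut system induces the \emph{same} $\rho$. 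This last point is exactly where the abelian (cyclic) nature of the deck group is used: for a nonabelian structure group, generating systems related by Hurwitz moves can produce genuinely different covers, so the paper's ``same indices, homeomorphic base, hence homeomorphic covers'' shortcut would not survive without some such argument. Your canonical completion of the punctured cover (adding $\gcd(d,d_i)$ points with local model $z\mapsto z^{d/\gcd(d,d_i)}$) is also the right way to pass to the closed surface $X$. In short: both arguments reach the same conclusion, but yours supplies the monodromy-theoretic mechanism that the paper leaves implicit, and in particular it is the only one of the two that actually proves the branch-cut independence asserted in the statement.
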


\begin{proof}
Let $Y = \mathbb{S}^2 \setminus \{p_1, \ldots , p_n\}$ and $Y' = \mathbb{S}^2 \setminus \{p_1', \ldots , p_n'\}$ both be $n$-punctured spheres. Let $X$ be a $d$-fold cyclically branched cover over $Y$ defined by $(d_1, \ldots , d_n),$ and $X'$ be a $d$-fold cyclically branched cover over $Y'$ with the same branching indices. Since $p_i$ and $p_i'$ have the same branching indices and $Y$ is homeomorphic to $Y',$ their coverings are homeomorphic to each other.
\end{proof}

Now we give a criterion on the branching indices so that the coverings constructed by the given branching indices are connected.

\begin{lemma} Let $X$ be a $d$-fold cyclically branched covering over an $n$-punctured sphere, defined by branching indices $(d_1, \ldots , d_n).$ Then $X$ is connected if and only if $\gcd(d_1, \ldots , d_n) = 1.$
\end{lemma}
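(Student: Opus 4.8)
The plan is to show both directions of the equivalence using the cyclic structure of the covering. The key object is the monodromy: each loop around $p_i$ acts on the sheets $Y_1, \ldots, Y_d$ by the permutation $j \mapsto j + d_i \pmod d$, and connectivity of $X$ is equivalent to the transitivity of the group of deck-related sheet permutations generated by these loops. So the proof reduces to a statement about the subgroup of $\Z/d\Z$ generated by the branching indices.

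First I would set up the correspondence between connectivity and transitivity. Since $X \to Y$ is a covering branched only over the $p_i$ (by Theorem~\ref{thm: wdefd cyc br cov}), the sheets over the base point $q$ are permuted by monodromy around the generators of $\pi_1(Y, q)$, which are the loops $\sigma_i$ encircling each $p_i$. By the construction, $\sigma_i$ sends sheet $Y_j$ to $Y_{j + d_i}$. The covering $X$ is connected if and only if the monodromy group acts transitively on $\{1, \ldots, d\}$, equivalently on $\Z/d\Z$. Since each generator acts by translation $j \mapsto j + d_i$, the orbit of the sheet $0$ under the full monodromy group is exactly the additive subgroup $H = \langle d_1, \ldots, d_n \rangle \leq \Z/d\Z$ generated by the images of the branching indices.

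Next I would identify this subgroup explicitly. A standard fact about cyclic groups is that $\langle d_1, \ldots, d_n \rangle = \langle \gcd(d_1, \ldots, d_n) \rangle$ as a subgroup of $\Z/d\Z$, and this subgroup is all of $\Z/d\Z$ precisely when $\gcd(d_1, \ldots, d_n, d) = 1$. To match the statement as written, I would note that the well-definedness condition $\sum d_i \equiv 0 \pmod d$ together with the constraint $d_i \in \{1, \ldots, d-1\}$ lets one take the gcd over just $d_1, \ldots, d_n$: indeed $\gcd(d_1, \ldots, d_n)$ divides $\sum d_i$, which is a multiple of $d$, so $\gcd(d_1, \ldots, d_n)$ and $\gcd(d_1, \ldots, d_n, d)$ coincide as generators. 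Thus $H = \Z/d\Z$ if and only if $\gcd(d_1, \ldots, d_n) = 1$, giving transitivity and hence connectivity in that case, while a proper gcd $g > 1$ forces the orbit of $0$ to be a proper subgroup and $X$ to split into $g$ components.

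\emph{The main obstacle} is mostly one of rigor rather than difficulty: making precise that the monodromy orbits of the sheets correspond exactly to the connected components of $X$, and that the subgroup of $\Z/d\Z$ generated by the $d_i$ governs these orbits. Care is needed with the interplay between $\gcd(d_1, \ldots, d_n)$ and the modulus $d$; the clean reduction above relies on the already-established closedness condition, so I would invoke Theorem~\ref{thm: wdefd cyc br cov} explicitly rather than rederive the bookkeeping. Once the monodromy-to-subgroup dictionary is stated cleanly, both directions follow immediately from the elementary structure theory of cyclic groups.
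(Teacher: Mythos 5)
Your framework is essentially the paper's: the paper also argues by lifting loops around the punctures, i.e., by the monodromy action $j \mapsto j + d_i$ on the sheets, and your orbit/subgroup dictionary is a cleaner, more rigorous packaging of that idea. The ``if'' direction of your argument is sound. The genuine gap is in the ``only if'' direction, at the step where you claim $\gcd(d_1, \ldots, d_n) = \gcd(d_1, \ldots, d_n, d)$. Your justification --- that $\gcd(d_1, \ldots, d_n)$ divides $\sum_i d_i$, which is a multiple of $d$ --- does not give this: from $g \mid k d$ one cannot conclude $g \mid d$. In fact the claimed equality fails under exactly the standing hypotheses of the construction. Take $d = 4$ and $(d_1, d_2, d_3, d_4) = (3, 3, 3, 3)$: each $d_i \in \{1, \ldots, d-1\}$ and $\sum_i d_i = 12 \equiv 0 \pmod 4$, so Theorem~\ref{thm: wdefd cyc br cov} applies and the covering is a closed surface; the monodromy subgroup is $\langle 3 \rangle = \Z / 4 \Z$ since $3$ is a unit mod $4$, so the covering is connected --- indeed, relabeling the sheets by $j \mapsto 3 j$ identifies it with the $(1,1,1,1)$ covering, the genus three surface discussed later in the paper --- and yet $\gcd(3,3,3,3) = 3 \neq 1$.

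What your monodromy dictionary actually proves, correctly and with no extra work, is: $X$ is connected if and only if $\gcd(d, d_1, \ldots, d_n) = 1$, and in general the number of components equals $\gcd(d, d_1, \ldots, d_n)$, the index of $\langle d_1, \ldots, d_n \rangle$ in $\Z / d \Z$. You should either prove that version of the statement, or add the hypothesis that the branching indices have been normalized (multiplying all $d_i$ by a unit of $\Z / d \Z$ yields an isomorphic covering, and after such a normalization the two gcd's can be made to agree). For what it is worth, the paper's own proof shares this blind spot: it computes that $p_i$ has $\gcd(d, d_i)$ preimages and then passes to ``$X$ has $\gcd(d_1, \ldots, d_n)$ components'' with the modulus $d$ silently dropped. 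So your proposal is faithful to the paper's approach, but as a self-contained argument the reduction step as you wrote it is false and must be repaired as above.
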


\begin{proof} We refer to the lifting properties in the proof of Theorem \ref{thm: wdefd cyc br cov}. Without loss of generality, let $i = 1$ and assume that $\gcd(d, d_1) = 1.$ Then, a positively oriented simple closed curve around $p_1 \in Y_j$ (and no other $p_i$ on $Y_j$ for some $j$) lifts to a closed curve that passes through all $Y_j.$ That is, there exists one preimage of $p_i$ on $X.$ However, if $\gcd(d, d_1) \neq 1,$ say $\gcd(d, d_1) = 2,$ then a positively oriented simple closed curve lifts to a closed curve that passes through only $Y_1, Y_3, \ldots , Y_{d -1}$ or only $Y_2 ,Y_4, \ldots , Y_d.$ In other words, there exist $\gcd(d, d_i)$ preimages of $p_i$ on $X.$ Hence, if $\gcd(d_1, \ldots , d_n) \neq 1,$ then $X$ is a disconnected surface that has $\gcd(d_1, \ldots , d_n)$ components.
\end{proof}

Now that the covering is topologically well-defined, we compute the genus of the covering. 

\begin{prop}
\label{prop: g(X)}
Let $X$ be a $d$-fold cyclically branched cover over an $n$-punctured sphere defined by branching indices $(d_1, \ldots , d_n).$ Then $$\textrm{genus}(X) = \frac{d (n - 2)}{2} + 1 - \frac{1}{2} \sum \gcd(d, d_i).$$ Specifically, if $\gcd(d, d_i) = 1$ for all $i,$ then $\textrm{genus}(X) = (\frac{n}{2} - 1) (d - 1).$
\end{prop}

\begin{proof}
We will find all branched points and their branching orders, then apply the Riemann-Hurwitz formula to find the genus of the covering. If $\gcd(d, d_i) = 1,$ then the degree of the covering map at $\widetilde{p_i}$ is $d,$ hence the branching order is $d - 1.$ For any $d_i \geq 1,$ there exist $\gcd(d, d_i)$ preimages $\widetilde{p_i}$ on $X.$ At each $\widetilde{p_i},$ the degree of the covering is $\frac{d}{\gcd(d, d_i)},$ so the branching order is $\frac{d}{\gcd(d, d_i)} - 1.$ We apply the Riemann-Hurwitz formula, then $$\begin{array}{rcl}
2 - 2 \textrm{genus}(X) & = & d (2 - 2 \cdot 0) - \sum\limits_i \gcd(d, d_i) \left( \frac{d}{\gcd(d, d_i)} - 1 \right)\\
& = & d (2 - 2 \cdot 0) - \sum\limits_i \left( d - \gcd(d, d_i) \right) \\
& = & d (2 - n) + \sum\limits_i \gcd(d, d_i)
\end{array}$$ and therefore $\textrm{genus}(X) = \frac{d (n - 2)}{2} + 1 - \frac{1}{2} \sum \gcd(d, d_i).$
\end{proof}

In the following theorem we prove that, given $n \geq 3$ and $g \geq 2,$ there are only finitely many cyclically branched covers $X$ over an $n$-punctured sphere where $\textrm{genus}(X) = g.$

\begin{thm}
\label{thm: genera}
Let $n \geq 3$ and $g \geq 2.$ Then there are finitely many surfaces $X$ such that $X$ is a cyclically branched cover over an $n$-punctured sphere and $\textrm{genus}(X) = g.$ Moreover, the degree $d$ of such covering is bounded by $d \leq d(g, n)$ where
$$\begin{array}{ll}d \leq 84 (g - 1), \qquad & \textit{if} \quad n = 3\\
d \leq 12 (g - 1), \qquad & \textit{if} \quad n = 4\end{array}$$ and $$\frac{2 g}{n - 2}  + 1 \leq d \leq \frac{4 (g - 1)}{n - 4}, \qquad \textrm{if} \quad n \geq 5.$$
\end{thm}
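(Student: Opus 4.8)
The plan is to start from the Riemann--Hurwitz computation already recorded in Proposition~\ref{prop: g(X)}. Writing $g = \textrm{genus}(X)$, that proposition is equivalent to
$$2g - 2 = d(n-2) - \sum_{i=1}^n \gcd(d, d_i).$$
Since each $d_i \in \{1, \ldots, d-1\}$, the integer $\gcd(d, d_i)$ is a \emph{proper} divisor of $d$, so $1 \le \gcd(d, d_i) \le d/2$. It is convenient to set $e_i = d / \gcd(d, d_i)$, the order of the local monodromy at $p_i$; then each $e_i$ is an integer with $2 \le e_i \le d$, and the identity above becomes
$$2g - 2 = d\Bigl(n - 2 - \sum_{i=1}^n \tfrac{1}{e_i}\Bigr).$$
The right-hand factor is $-d$ times the orbifold Euler characteristic of the base sphere carrying cone orders $e_1, \ldots, e_n$. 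Because $g \ge 2$ we have $2g - 2 > 0$, so $S := n - 2 - \sum_{i=1}^n 1/e_i$ is strictly positive and $d = (2g-2)/S$. Thus bounding $d$ amounts to bounding $S$ away from $0$ from below (for the upper bounds on $d$) and from above (for the lower bound).

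The lower bound is immediate and in fact holds for every $n \ge 3$: from $\gcd(d,d_i) \ge 1$ we get $\sum \gcd(d,d_i) \ge n$, whence $2g - 2 \le d(n-2) - n$ and $d \ge (2g-2+n)/(n-2) = \tfrac{2g}{n-2} + 1$. The upper bound for $n \ge 5$ is equally direct: each $e_i \ge 2$ forces $\sum 1/e_i \le n/2$, so $S \ge (n-4)/2 > 0$ and hence $d = (2g-2)/S \le \tfrac{4(g-1)}{n-4}$.

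The substantive cases are $n = 3$ and $n = 4$, where the coefficient $(n-4)/2$ is non-positive and the crude estimate $\sum 1/e_i \le n/2$ yields nothing. Here I would invoke the classical extremal property of triangle and quadrilateral orbifold data: over all integer tuples $(e_i)$ with each $e_i \ge 2$, the \emph{smallest positive} value of $1 - \sum_{i=1}^3 1/e_i$ is $\tfrac{1}{42}$, attained only at $(2,3,7)$, and the smallest positive value of $2 - \sum_{i=1}^4 1/e_i$ is $\tfrac{1}{6}$, attained at $(2,2,2,3)$. I expect this to be the main obstacle, and I would prove it by the standard monotone case analysis: order $e_1 \le \cdots \le e_n$, observe that $S > 0$ forces small values for the first few $e_i$ (for $n=3$: $e_1 = 2$, then $e_2 = 3$, then $e_3 \le 7$; for $n=4$: at most one $e_i$ exceeds $2$, and that one is at most $3$), and check the finitely many resulting tuples. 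Note that the divisibility constraints $e_i \mid d$ only shrink the feasible set, so they may be discarded when seeking a lower bound on $S$. Granting the extremal values, $d = (2g-2)/S \le 42(2g-2) = 84(g-1)$ when $n = 3$ and $d \le 6(2g-2) = 12(g-1)$ when $n = 4$.

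Finally, I would assemble the finiteness statement. For fixed $n$ and $g$ the degree $d$ ranges over a finite set by the bounds just established, and for each admissible $d$ the branching data $(d_1, \ldots, d_n)$ lies in the finite set $\{1, \ldots, d-1\}^n$. By the uniqueness lemma above, each such tuple determines the covering $X$ up to homeomorphism, so only finitely many homeomorphism types of $X$ can occur. (One may additionally impose $\sum d_i \equiv 0 \pmod d$ from Theorem~\ref{thm: wdefd cyc br cov} and $\gcd(d_1, \ldots, d_n) = 1$ from the connectivity lemma, but these conditions only cut the finite set down further.)
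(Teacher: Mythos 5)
Your proposal is correct and follows essentially the same route as the paper: both start from Proposition~\ref{prop: g(X)}, rewrite $\gcd(d,d_i)$ as $d/e_i$ with integers $e_i \geq 2$ (the paper's $d/p$, $d/q$, $d/r$), and then invoke the extremal orbifold facts that $\sum 1/e_i < 1$ forces $\sum 1/e_i \leq \tfrac{41}{42}$ (at $(2,3,7)$) and $\sum 1/e_i < 2$ forces $\sum 1/e_i \leq \tfrac{11}{6}$ (at $(2,2,2,3)$), with the $n \geq 5$ case and the final finiteness count handled identically. The only differences are cosmetic: you phrase things via the orbifold Euler characteristic $S$ and note that the lower bound $d \geq \tfrac{2g}{n-2}+1$ holds for all $n \geq 3$, a slight strengthening the paper does not record.
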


\begin{proof}
In Proposition \ref{prop: g(X)}, we found the genus of a $d$-fold cyclically branched covering $X$ over an $n$-punctured sphere defined by branching indices $(d_1, \ldots , d_n).$ For a fixed $n \geq 3$ and a fixed $g \geq 2,$ we will use the relation between $n, d, d_i,$ and $\textrm{genus}(X)$ to find a bound on $d.$ Then, for a given $d$ there are only finitely many integer paritions of $d$ into $n$ parts. This proves that there are only finitely many $d$-fold cyclically branched covers over an $n$-punctured sphere, where $\textrm{genus}(X) = g.$ \\

Recall that we assume $\gcd(d_1, \ldots , d_n) = 1.$ Then $n \leq \sum \gcd(d, d_i) < \frac{d}{2} n,$ and due to the previous proposition, $$n \leq d (n - 2) + 2 - 2 g < \frac{d}{2} n$$ which yields the desired bound for $d$ given $n \geq 5.$ \\

If $n = 3,$ then due to Proposition \ref{prop: g(X)} we have $\sum \gcd(d, d_i) = d + 2 - 2 g.$ Since $\gcd(d, d_i)$ is a factor of $d,$ $\sum \gcd(d, d_i)$ can be written as $\sum \gcd(d, d_i) = \frac{d}{p} + \frac{d}{q} + \frac{d}{r}$ for some $p, q, r > 1,$ not necessarily distinct factors of $d.$ As our goal is to find an upper bound for $d,$ we look for an upper bound for $\sum \gcd(d, d_i).$ We use the fact that $\frac{1}{p} + \frac{1}{q} + \frac{1}{r} \leq \frac{1}{2} + \frac{1}{3} + \frac{1}{7}$ for any $p, q, r$ that satisfy $\frac{1}{p} + \frac{1}{q} + \frac{1}{r} < 1.$ Therefore, $$d + 2 - 2 g \leq \frac{d}{2} + \frac{d}{3} + \frac{d}{7} = \frac{41}{42} d$$ with which we achieve an upper bound $d \leq 84 (g - 1)$ for $g \geq 2.$\\

If $n = 4,$ then $\sum \gcd(d, d_i) = 2 (d + 1 - g).$ Similarly $\sum \gcd(d, d_i) = \frac{1}{p} + \frac{1}{q} + \frac{1}{r} + \frac{1}{s}$ for some $p, q, r, s > 1,$ are factors of $d.$ Since $\frac{1}{p} + \frac{1}{q} + \frac{1}{r} + \frac{1}{s} \leq \frac{1}{2} + \frac{1}{2} + \frac{1}{2} + \frac{1}{3}$ for any $p, q, r, s$ that satisfy $\frac{1}{p} + \frac{1}{q} + \frac{1}{r} + \frac{1}{s} < 2,$ $$2 (d + 1 - g) \leq \frac{d}{2} + \frac{d}{2} + \frac{d}{2} + \frac{d}{3} = \frac{11}{6} d.$$ Therefore, $d \leq 12 (g - 1)$ for $g \geq 2.$ 
\end{proof}

We note that a partition of an integer does not include permutation of the parts. For notation, we let $d_1 \leq \ldots \leq d_n.$ All such coverings up to genus five are listed in Appendix~\ref{genera}.

\subsection*{Octa-4 as a cyclically branched cover over a sphere}
\label{subsec: construction}
Earlier, we proved that the underlying surface of the Octa-4 surface has the conformal structure as an eightfold cyclically branched cover over a thrice-punctured sphere. The following theorem identifies the branching indices that define the Octa-4 surface.

\begin{thm*} [\cite{lee2017triply}] \label{thm: 125} The Octa-4 surface has the same conformal structure as the eightfold cyclically branched cover over a thrice-punctured sphere defined by branching indices $(d_1, d_2, d_3) = (1, 2, 5).$
\end{thm*}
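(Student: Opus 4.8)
The plan is to pin down the three branching indices by analyzing the fixed-point and orbit data of the order-eight rotation $a$ on $X$, which was already established in the previous theorem. Recall from the proof that $X \to X/\langle a\rangle$ is an eightfold cyclic covering over a sphere with three branch points, and that the ramification data consists of two fixed points (orbits of length one, branching order seven) and two points with orbits of length four (branching order three). Translating this into the language of Proposition~\ref{prop: g(X)}: at each of the three punctures $p_i$, the number of preimages is $\gcd(8, d_i)$ and the local degree of the covering at each preimage is $8/\gcd(8, d_i)$. So the two fixed points correspond to punctures with $\gcd(8, d_i) = 1$ (one preimage, local degree eight, branching order seven), and the single orbit of length four corresponds to a puncture with $\gcd(8, d_i) = 4$ (four preimages, local degree two, branching order one). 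This already forces the multiset $\{\gcd(8, d_1), \gcd(8, d_2), \gcd(8, d_3)\} = \{1, 1, 4\}$.

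Next I would impose the well-definedness and connectedness constraints from Theorem~\ref{thm: wdefd cyc br cov} and the connectedness lemma: we need $d_1 + d_2 + d_3 \equiv 0 \pmod 8$ and $\gcd(d_1, d_2, d_3) = 1$, with each $d_i \in \{1, \ldots, 7\}$. The condition $\gcd(8, d_i) = 4$ for exactly one index forces that $d_i \in \{4\}$ (since $d_i \le 7$ and $\gcd(8,d_i)=4$ means $d_i = 4$), while $\gcd(8, d_j) = 1$ for the other two forces those to be odd. Together with the sum condition $d_1 + d_2 + d_3 \equiv 0 \pmod 8$ and the genus constraint (genus three, which Proposition~\ref{prop: g(X)} confirms via $\frac{8(3-2)}{2} + 1 - \frac{1}{2}(1 + 1 + 4) = 3$), one narrows the candidates. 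Up to the ordering convention $d_1 \le d_2 \le d_3$ and the permissible relabelings, this should isolate a small list, and I expect $(1, 2, 5)$ to emerge — but note $(1,2,5)$ has $\gcd(8,2) = 2$, not $4$, so the raw fixed-point bookkeeping must be reconciled with the stated answer.

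The main obstacle, and the step requiring genuine care, is precisely this reconciliation: the branching indices $(d_i)$ are defined only up to the choices of branch cuts and orientation, and distinct index tuples can yield homeomorphic (indeed conformally equivalent) coverings. The resolution is that the branching index $d_i$ records the \emph{cyclic shift} in the gluing, not directly the number of preimages; the number of preimages is $\gcd(8, d_i)$ and the branching order is $8/\gcd(8,d_i) - 1$. For $(1, 2, 5)$ we compute $\gcd(8,1) = 1$, $\gcd(8,2) = 2$, $\gcd(8,5) = 1$, giving branching orders $7, 3, 7$ — which matches \emph{exactly} the ramification data $(7, 3, 3, 7)$ read off from Figure~\ref{hyperbolic_tiling}, since the single puncture with $\gcd(8, 2) = 2$ produces two preimages each of branching order three. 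So the correct bookkeeping is $\{1, 2, 1\}$-type, not $\{1, 1, 4\}$, and my first-pass orbit reading must be corrected accordingly. I would therefore carry out the argument by matching the ordered ramification profile $(7, 3, 3, 7)$ against $\sum \gcd(8, d_i) = d + 2 - 2g = 8 + 2 - 6 = 4$, verify that $(1, 2, 5)$ is the unique tuple (up to relabeling and the admissible shift-equivalence generated by multiplication by units mod $8$) satisfying $d_1 + d_2 + d_3 \equiv 0 \pmod 8$, $\gcd(d_1,d_2,d_3)=1$, and the prescribed preimage counts, and finally invoke the uniqueness lemma to conclude that this tuple determines the conformal structure of $X$.
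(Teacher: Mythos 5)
Your proposal self-corrects its first-pass misreading of the orbit data (good), but the final step rests on a uniqueness claim that is false, and this is a genuine gap. You assert that the constraints --- ramification profile $(7,3,3,7)$ (equivalently $\gcd$-profile $\{1,1,2\}$ and $\sum_i \gcd(8,d_i)=4$), the closedness condition $d_1+d_2+d_3\equiv 0 \pmod 8$, connectedness $\gcd(d_1,d_2,d_3)=1$, and quotienting by relabeling and multiplication by units mod $8$ --- isolate $(1,2,5)$. They do not. The tuple $(1,1,6)$ satisfies every one of these conditions: $1+1+6\equiv 0\pmod 8$, $\gcd$-profile $(1,1,2)$, hence two branch points of order $7$ and one puncture with two preimages of order $3$, and genus $\frac{8}{2}+1-\frac{1}{2}(1+1+2)=3$. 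Its orbit under the units $\{1,3,5,7\}$ is $\{(1,1,6),(2,3,3),(2,7,7),(5,5,6)\}$, which is disjoint from the orbit $\{(1,2,5),(3,6,7)\}$ of $(1,2,5)$; indeed the paper's own Appendix~\ref{genera} lists $(1,1,6)$ and $(1,2,5)$ as \emph{distinct} genus-three eightfold coverings. Worse, the two are not even conformally equivalent surfaces: for $(1,1,6)$ the admissible cone metrics (multipliers $1,2,3$, giving $(1,1,6),(2,2,4),(3,3,2)$) yield $1$-forms with divisors $(\omega_1)=2\widetilde{p_3}_1+2\widetilde{p_3}_2$, $(\omega_2)=\widetilde{p_1}+\widetilde{p_2}+\widetilde{p_3}_1+\widetilde{p_3}_2$, $(\omega_3)=2\widetilde{p_1}+2\widetilde{p_2}$, so $\omega_1\omega_3=\omega_2^2$ and that covering is hyperelliptic, whereas the Octa-4 is the non-hyperelliptic Fermat quartic. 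So no amount of bookkeeping with branching orders, preimage counts, congruences, and unit-equivalence can complete the argument.

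The missing ingredient is the monodromy itself --- the actual cyclic shift of sheets around each branch point --- and that is exactly what the paper's proof computes from the edge identifications of the hyperbolic $16$-gon in Figure~\ref{hyperbolic_tiling}. The paper labels the sheets $Y_1,\ldots,Y_8$ counterclockwise and observes: a positively oriented loop around the center passes $Y_j\to Y_{j+1}$, so $d_1=1$; a positively oriented loop around $\bullet$, followed through the dotted-line identifications, passes $Y_j\to Y_{j+5}$, so $d_3=5$; and $\ast$, $\circ$ lie on the odd, resp.\ even, sheets, so $p_2$ has two preimages and $\gcd(8,d_2)=2$, whence $d_2=2$ from $1+d_2+5\equiv 0\pmod 8$. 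Your reduction to the numerical ramification profile discards precisely this shift data, and the profile alone cannot distinguish $(1,2,5)$ from $(1,1,6)$; to repair the proof you must trace at least one nontrivial loop (around the center and around $\bullet$) through the identified edges, as the paper does.
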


\begin{proof} Let $a$ be an order-eight rotation that yields a sphere $X / \langle a \rangle.$ The quotient sphere $Y_1$ is shaded in Figure~\ref{hyperbolic_tiling}. The branched points are marked as $\bullet,$ $\ast,$ $\circ,$ including the center of the tessellation. We label the spheres so that $Y_1, Y_2, \ldots , Y_8$ are aligned counterclockwise around the center of the tessellation. A positively oriented simple closed curve around the center of the tessellation goes from $Y_j$ to $Y_{j + 1},$ then to $Y_{j + 2},$ and so on. We label the center of the tessellation as $\widetilde{p_1},$ then $d_1 = 1.$ A positively oriented simple closed curve around $\bullet$ via the identification of the edges goes from $Y_j$ to $Y_{j+5}$ for any $j.$ We label this point as $\widetilde{p_3},$ then the branching index at the corresponding point is $d_3 = 5.$ Notice that $\ast$ appears on $Y_1,$ $Y_3,$ $Y_5,$ and $Y_7,$ where $\circ$ appears on $Y_2,$ $Y_4,$ $Y_6,$ and $Y_8.$ That is, $p_2$ has two preimages on $X,$ denoted by $\widetilde{p_2}_1$ and $\widetilde{p_2}_2,$ and $\gcd(d, d_2) = 2.$
\end{proof}

\subsection{Maps between cyclically branched covers}
\label{sec: aut} 
To find the automorphism group on cyclically branched coverings over punctured spheres, we first study maps on punctured spheres and then look at their lifts. \\

Recall the construction of a cyclically branched cover $X$ from Subsection \ref{sec: construction}. Analogously, define $Y' = \mathbb{S}^2 \setminus \{p_1', \ldots , p_{n'}'\}$ to be an $n'$-punctured sphere, $q'$ a base point, and $\gamma_i'$ the branch cuts. Given $d'$ and $(d_1', \ldots , d_{n'}'),$ we likewise construct $X'.$ \\

Let $\phi: Y \rightarrow Y'$ be a holomorphic map so that $\phi(p_i) = p_{\phi(i)}'$ and $\phi(\gamma_i) = \gamma_{\phi(i)}'.$ In other words, the punctures are mapped to punctures and the branch cuts are mapped to branch cuts. This implies that $\phi(q) = q'$ and that there is an induced map $\phi: \{1, \ldots , n\} \rightarrow \{1, \ldots , n'\}$ on the indices. In the following theorem, we find a necessary and sufficient condition for $\phi$ so that its lift $\widehat\phi: X \rightarrow X'$ exists. Specifically, we seek maps that preserve the identification of branch cuts and are therefore compatible with the construction of the coverings.

\begin{thm} \label{thm:aut} Let $X$ be a $d$-fold cyclically branched cover over an $n$-punctured sphere $Y$ with branching indices $(d_i, \ldots , d_n),$ and $X'$ be a $d'$-fold cyclically branched cover over an $n'$-punctured sphere $Y'$ with branching indices $(d_i', \ldots , d_{n'}').$ Let $\phi: Y \rightarrow Y'$ be a map such that $\phi(p_i) = p_{\phi(i)}'$ and $\phi(\gamma_i) = \gamma_{\phi(i)}'$ that induces a map $\phi: \{1, \ldots , n\} \rightarrow \{1, \ldots , n'\}$ on the indices. Then $\widehat\phi: (X, \{1, \ldots , n\}) \rightarrow (X', \{1, \ldots , n'\})$ exists if and only if \begin{equation}\label{eqn: mu}\exists \, \mu \in \mathbb{Z} / d' \mathbb{Z} \quad \textrm{such that} \quad d_{\phi(i)}' \equiv \mu \cdot d_i \pmod{d'}.\end{equation} Then $\widehat\phi$ is given by \begin{equation}\label{eqn: lift}\widehat\phi: (\widetilde{p_i}, j) \mapsto (\widetilde{\phi(p_i)}, \mu \cdot j + \nu \pmod{d'})\end{equation} for some $\nu \in \{0, 1, \ldots , d'\}.$
\end{thm}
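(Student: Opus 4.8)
The plan is to work directly with the sheet description of the coverings from Subsection~\ref{sec: construction}, representing a point of $X$ away from the branch cuts as a pair $(y,j)$ with $y\in Y$ and sheet index $j\in\Z/d\Z$, and recalling that a positively oriented loop around $p_i$ advances the sheet by $j\mapsto j+d_i$ (as in the proof of Theorem~\ref{thm: wdefd cyc br cov}); likewise on $X'$ a loop around $p_\ell'$ advances by $d_\ell'$ in $\Z/d'\Z$. Any lift $\widehat\phi$ must cover $\phi$, so on the interior of the sheets it has the form $\widehat\phi(y,j)=(\phi(y),\sigma_j(y))$ for some sheet-valued function, and the first reduction is to show $\sigma_j(y)$ depends only on $j$. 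This follows because each sheet $Y_j$ with its branch cuts removed is connected and $\phi$ maps it into $Y'$ minus its cuts, so the continuous lift lands in a single sheet $Y'_{\sigma(j)}$. This defines $\sigma:\Z/d\Z\to\Z/d'\Z$ and reduces the theorem to deciding which $\sigma$ are compatible with the edge identifications.

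For necessity, the compatibility across $\gamma_i$ is the heart of the matter. Because $\phi$ is holomorphic, hence orientation preserving, it carries the left and right sides of $\gamma_i$ to the left and right sides of $\gamma_{\phi(i)}'$ respectively. Applying $\widehat\phi$ to the identification gluing $\gamma_i^l$ of $Y_j$ to $\gamma_i^r$ of $Y_{j+d_i}$, and comparing with the corresponding gluing on $X'$, forces
\[
\sigma(j+d_i)\equiv\sigma(j)+d_{\phi(i)}'\pmod{d'}\qquad\text{for all }i\text{ and all }j.
\]
I would then linearize $\sigma$. Since $X$ is connected, $\gcd(d_1,\dots,d_n)=1$, so Bezout gives integers $c_i$ with $\sum_i c_i d_i=1$; iterating the displayed relation yields $\sigma(j+\sum_i k_i d_i)=\sigma(j)+\sum_i k_i d_{\phi(i)}'$ for all integers $k_i$, and taking $k_i=c_i$ gives $\sigma(j+1)=\sigma(j)+\mu$ with $\mu:=\sum_i c_i d_{\phi(i)}'\pmod{d'}$. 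Hence $\sigma(j)=\mu j+\nu$ with $\nu:=\sigma(0)$, which is the sheet rule in \eqref{eqn: lift}. Feeding this back into the displayed relation returns $d_{\phi(i)}'\equiv\mu d_i\pmod{d'}$ for every $i$, i.e.\ \eqref{eqn: mu}; concretely, applying the iterated relation along the integer identity $\sum_i(d_k c_i-\delta_{ik})d_i=0$ isolates $d_{\phi(k)}'\equiv\mu d_k$.

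For sufficiency, given a $\mu$ satisfying \eqref{eqn: mu} I would set $\sigma(j)=\mu j+\nu$ and verify directly that it satisfies the compatibility relation above, so that the sheetwise maps $(y,j)\mapsto(\phi(y),\sigma(j))$ glue across every branch cut into a globally defined continuous lift $\widehat\phi$ (holomorphic away from the branch points, and across them by removability). The free parameter $\nu$ records the ambiguity of postcomposing with a deck transformation of $X'\to Y'$, which accounts for the distinct lifts and explains the form \eqref{eqn: lift}.

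The step I expect to be the main obstacle — and would treat most carefully — is the consistency of $\sigma$ as a genuine map $\Z/d\Z\to\Z/d'\Z$: the local branch-cut relations must assemble into a single global linear rule, equivalently multiplication by $\mu$ must descend to a homomorphism $\Z/d\Z\to\Z/d'\Z$ so that $\mu j+\nu$ is well defined. In the equal-degree situation most relevant to us, where $d=d'$ (for instance when $\widehat\phi$ is an automorphism of a single covering), this is automatic and \eqref{eqn: mu} is both necessary and sufficient as stated. As a conceptual check I would re-derive the criterion through covering-space theory: $X$ and $X'$ are the cyclic covers classified by the monodromy homomorphisms $\rho:\pi_1(Y)\to\Z/d\Z,\ \alpha_i\mapsto d_i$ and $\rho':\pi_1(Y')\to\Z/d'\Z,\ \alpha_\ell'\mapsto d_\ell'$, and the lifting criterion says $\widehat\phi$ exists precisely when $\phi_*(\ker\rho)\subseteq\ker\rho'$, i.e.\ when $\rho'\circ\phi_*$ factors as $\bar\mu\circ\rho$ for a homomorphism $\bar\mu$; evaluating on the generators $\alpha_i$ returns \eqref{eqn: mu} with $\mu=\bar\mu(1)$, while the deck group of $X'$ supplies the $\nu$-ambiguity.
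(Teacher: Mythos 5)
Your proposal is correct and follows essentially the same route as the paper's proof: the same sheet-index map $\sigma$, the same compatibility relation $\sigma(j+d_i) \equiv \sigma(j) + d_{\phi(i)}' \pmod{d'}$ across the branch cuts, the same Bezout linearization from connectedness ($\gcd(d_1,\ldots,d_n)=1$) giving $\sigma(j)=\mu j+\nu$, and the same substitution back to obtain $d_{\phi(i)}' \equiv \mu \cdot d_i \pmod{d'}$. Your additional care---verifying that $\sigma$ depends only on the sheet index, carrying out the sufficiency/gluing direction explicitly, and flagging that when $d \neq d'$ one also needs $\mu d \equiv 0 \pmod{d'}$ for $\mu j + \nu$ to descend to a map $\mathbb{Z}/d\mathbb{Z} \rightarrow \mathbb{Z}/d'\mathbb{Z}$ (automatic in the necessity direction and when $d = d'$)---fills in points the paper's write-up leaves implicit.
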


Before we prove the theorem, we look at examples of maps on thrice-punctured spheres and their possible lifts on cyclically branched coverings. 

\begin{exmp} \label{exmp: 124} Let $X$ be a sevenfold cyclically branched cover over a thrice-punctured sphere defined by branching indices $(d_1, d_2, d_3) = (1, 2, 4).$ By Proposition \ref{prop: g(X)}, the genus of the covering is $\frac{7}{2} + 1 - \frac{1}{2} \cdot 3 = 3.$ In fact, this is the description of Klein's quartic as a cyclically branched cover (\cite{weber1998klein}). Let $\phi$ be a self-map on the thrice-punctured sphere, that cyclically permutes $p_i.$ In other words, either $\phi$ is the identity map, $\phi: p_1 \mapsto p_2 \mapsto p_3 \mapsto p_1,$ or $\phi: p_1 \mapsto p_3 \mapsto p_2 \mapsto p_1.$ In other words, $\mu$ must be one of 1, 2, or 4 (Equation~\ref{eqn: mu}). If $\mu = 1,$ then $\phi$ is the identity map, so we have $$\widehat\phi: (\widetilde{p_i}, j) \mapsto (\widetilde{p_i}, j + \nu) \mapsto (\widetilde{p_i}, j + 2 \nu) \mapsto \cdots \mapsto (\widetilde{p_i}, j + 6 \nu) \mapsto (\widetilde{p_i}, j)$$ as an order-seven lift unless $\nu = 0,$ in which case it is a trivial map. If $\mu = 2$ (or 4), then $\phi: p_1 \mapsto p_2 \mapsto p_3 \mapsto p_1$ (or $\phi: p_1 \mapsto p_3 \mapsto p_2 \mapsto p_1,$ respectively). Take $\mu = 2,$ then we have $$\widehat\phi: (\widetilde{p_1}, j) \mapsto (\widetilde{p_2}, 2 j + \nu) \mapsto (\widetilde{p_3}, 4 j + 3 \nu) \mapsto (\widetilde{p_1}, j), \qquad \forall \nu = 0, 1, \ldots , 6$$ and all lifts are of order three. The case is similar for $\mu = 4.$
\end{exmp}

\begin{exmp} \label{exmp: 125} Let $X$ be the underlying surface of the Octa-4 surface, conformally equivalent to an eightfold cover over a thrice-punctured sphere defined by $(d_1, d_2, d_3) = (1, 2, 5)$ (Theorem \ref{thm: 125}). Let $\phi$ be a map on the thrice-punctured sphere that permutes $p_i.$ Then, $\mu$ must be either 1 or 5 by (Equation~\ref{eqn: mu}). If $\mu = 1,$ then $\phi$ is the identity map. While the lift fixes $\widetilde{p_1}$ and $\widetilde{p_3},$ it may interchange $\widetilde{p_2}_1$ and $\widetilde{p_2}_2,$ depending on the value of $\nu.$ If $\mu = 5,$ then $\phi$ interchanges $p_1$ and $p_3$ and fixes $p_2.$ However, this involution does not permute the branch cuts as we desire. Consider the thrice-punctured sphere as a doubled triangle with $p_i$ at the vertices. If $\phi$ interchanges $\gamma_1$ and $\gamma_3,$ then $\phi(\gamma_2)$ is no longer a branch cut. We modify the thrice-punctured sphere by making a second branch cut from $q$ to $p_2$ so that $\phi$ permutes the two branch cuts. The original branching index $d_2 = 2$ is then cut down by half at each branch cut. Then $$\widehat\phi: (\widetilde{p_1}, j) \mapsto (\widetilde{p_3}, 5 j + \nu) \mapsto (\widetilde{p_1}, j + 6 \nu) \mapsto \cdots \mapsto (\widetilde{p_3}, 5 j + 3 \nu) \mapsto (\widetilde{p_1}, j).$$ If $\nu$ is odd, then $\widehat{\phi}$ is a lift of order eight; if $\nu$ is either 2 or 6, then $\widehat{\phi}$ is a lift of order four; and if $\nu = 4,$ then $\widehat{\phi}$ is a lift of order two. Also, $\widehat{\phi}$ interchanges $\widetilde{p_2}_1$ and $\widetilde{p_2}_2$ if $\nu$ is odd and fixes $\widetilde{p_2}_i$ if $\nu$ is even.
\end{exmp}

\begin{proof}[Proof~\ref{thm:aut}] Our goal is to show that if a map $\phi: Y \rightarrow Y'$ satisfies the assumptions made above, its lift $\widehat\phi: X \rightarrow X'$ exists if and only if the identification of $\gamma_i'$ is preserved under $\widehat\phi.$ We show that $\widehat\phi$ must be of the form (Equation~\ref{eqn: lift}) and satisfy (Equation~\ref{eqn: mu}). \\

Recall that $X$ is determined by $(d_1, \ldots , d_n)$ where $\gamma_i^l$ on $Y_j$ is identified with $\gamma_i^r$ on $Y_{(j + d_i)}.$ Similarly, for $X',$ $\gamma_{\phi(i)}'^l$ of $Y_{j'}'$ is identified with $\gamma_{\phi(i)}'^r$ of $Y_{j' + d_{\phi(i)}'}'.$ Let $\sigma: \mathbb{Z} / d \mathbb{Z} \rightarrow \mathbb{Z} / d' \mathbb{Z}$ be a map defined on the indices of sheets $Y_j$ such that $\sigma(j) = j'.$ Suppose $\widehat\phi$ is given by $\widehat\phi: (\widetilde{p}, j) \mapsto (\widetilde{\phi(p)}, j' ( = \sigma(j))).$ Then, via $\widehat\phi,$ $\gamma_{\phi(i)}'^l$ of $Y_{\sigma(j)}' ( = Y_{j'}')$ is identified with $\gamma_{\phi(i)}'^r$ of the $Y_{\sigma(j + d_i)}' ( = Y_{j' + d_{\phi(i)}'}').$ Hence, the construction of $X'$ is compatible with the identification of branch cuts if and only if \begin{equation}\label{eqn: compatibility} \sigma(j + d_i) \pmod{d'} \equiv \sigma(j) + d_{\phi(i)}' \qquad \forall i, j.\end{equation} Mathematical induction implies $$\sigma(j + k \cdot d_i) \equiv (\sigma(j) + k \cdot d_{\phi(i)}') \pmod{d'}$$ for all $k.$ As we assume that the covering connected, we have $\gcd(d_1, \ldots, d_n) = 1$, hence there exist coefficients $c_1, \ldots, c_n \in \mathbb{Z}$ such that $\sum\limits_{i = 1}^n c_i d_i = 1.$ Then, $$\sigma(j + 1) = \sigma\left(j + \sum c_i d_i\right) \equiv \sigma(j) + \underbrace{\sum c_i d_{\phi(i)}'}_{=: \mu} \pmod{d'}.$$ By induction, this implies $\sigma(j + k) \equiv \sigma(j) + k \cdot \mu \pmod{d'}$ and by setting $j = 0,$ $$\sigma(k) \equiv \underbrace{\sigma(0)}_{=: \nu} + \mu \cdot k \pmod{d'}$$ as claimed. Lastly, (Equation~\ref{eqn: compatibility}) yields $$\sigma(j + d_i) = \mu \cdot (j + d_i) + \nu = \mu \cdot j + \nu + \mu \cdot d_i = \sigma(j) + \mu \cdot d_i,$$ in other words, $d_{\phi(i)}' \equiv \mu \cdot d_i \pmod{d'}.$
\end{proof}

\subsection{Cone metrics on punctured spheres}
\label{sec: cone}

Let $X$ be a cyclically branched cover over $Y,$ an $n$-punctured sphere. To find translational structures on $X,$ we will find cone metrics on $X$ that arise as pullbacks of cone metrics on $Y.$ Specifically, we will look at cone metrics on $Y$ that arise from the branching indices and define admissible cone metrics which induce holomorphic 1-forms on $X.$ In particular, this gives a translational structure and a basis of holomorphic 1-forms on $X.$ At the end of this section, we will revisit the Octa-4 surface and compute an explicit basis of holomorphic 1-forms on the underlying Riemann surface. 

\begin{defn} A cone metric is given by an atlas where open sets are mapped to either the Euclidean plane or a cone. In either case, the change of coordinates is via Euclidean motions, in other words, orientation preserving isometries. 
\end{defn}

The following proposition is a special case of the Gauss-Bonnet theorem that connects the topology (the Euler characteristic) and the geometry (total curvature) of a surface. It is also a generalization of Descartes' theorem on total angular defect of a polyhedron. We will show that given a cone metric on a compact Riemann surface of genus $g,$ the cone angles satisfy a sum condition. 

\begin{prop}
\label{prop: cone met}
Given a compact Riemann surface of genus $g$ and a cone metric, let $p_1, \ldots , p_n$ be distinguished points with respective cone angles $\theta_i.$ Then, $$\sum \theta_i = 2 \pi (2 g - 2 + n).$$
\end{prop}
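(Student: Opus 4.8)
The plan is to prove this via a discrete Gauss--Bonnet argument, triangulating the surface by flat geodesic triangles so that the statement reduces to counting angles against the Euler characteristic. The crucial observation is that a cone metric is, by definition, locally Euclidean away from the cone points: since the transition maps are Euclidean motions, the metric is flat on $X \setminus \{p_1, \dots, p_n\}$, so every smooth point has a neighborhood isometric to a disk in the plane (hence total angle $2\pi$ around it), while each $p_i$ has total angle $\theta_i$ by hypothesis.

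First I would choose a geodesic triangulation of $X$ whose vertex set contains all of the distinguished points $p_1, \dots, p_n$, with every face a Euclidean geodesic triangle. Writing $V, E, F$ for the numbers of vertices, edges, and faces, the standard face--edge incidence relation and the Euler relation give $3F = 2E$ and $V - E + F = \chi(X) = 2 - 2g$; eliminating $E$ yields $F = 2V - 4 + 4g$.

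Next I would count the total angle in two ways. Summing interior angles face by face, each of the $F$ triangles contributes $\pi$, for a grand total of $\pi F$. Summing instead vertex by vertex, the angles around each distinguished point $p_i$ add up to $\theta_i$, while the angles around each of the remaining $V - n$ smooth vertices add up to $2\pi$; this gives $\sum_{i=1}^n \theta_i + 2\pi(V - n)$. Equating the two counts,
\[
\pi F = \sum_{i=1}^n \theta_i + 2\pi(V - n),
\]
and substituting $F = 2V - 4 + 4g$ cancels the $2\pi V$ terms, leaving $\sum_i \theta_i = 2\pi(2g - 2 + n)$, exactly as claimed.

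The main obstacle is the existence of the geodesic triangulation with the $p_i$ among the vertices: one must verify that a compact flat cone surface can be subdivided into Euclidean geodesic triangles meeting edge-to-edge with prescribed vertices. This is standard but not entirely trivial, and I would either cite it or construct it by refining a topological triangulation of $X$ finely enough that no triangle contains a $p_i$ in its interior or on an edge, then straightening each edge to the geodesic in its flat homotopy class rel endpoints. As an alternative that avoids triangulation entirely, one could invoke the smooth Gauss--Bonnet theorem on $X$ with small geodesic disks around the $p_i$ removed: the flat part contributes zero curvature, each cone point contributes an angle defect $2\pi - \theta_i$, and $\sum_i (2\pi - \theta_i) = 2\pi\chi(X) = 2\pi(2 - 2g)$ rearranges to the same identity.
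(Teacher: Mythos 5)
Your proof is correct, but your primary argument takes a genuinely different route from the paper. The paper's proof is essentially the alternative you mention in your last sentences: it treats the Gaussian curvature as concentrated at the cone points, each $p_i$ contributing an angle defect $2\pi - \theta_i$, so that Gauss--Bonnet gives $2\pi\chi(X) = \sum_i (2\pi - \theta_i)$, which rearranges to the claim. Your main argument instead runs a discrete count: a geodesic triangulation of $X$ with the $p_i$ among the vertices, the relations $3F = 2E$ and $V - E + F = 2 - 2g$, and the two ways of summing interior angles, $\pi F$ versus $\sum_i \theta_i + 2\pi(V - n)$; your algebra is right and yields the identity. What your route buys is self-containedness: it uses only Euler's formula and the Euclidean angle sum, avoiding the distributional version of Gauss--Bonnet that the paper invokes without further justification. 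What it costs is the existence of the geodesic triangulation, which you correctly isolate as the one nontrivial ingredient. Here you should cite the standard result (for instance the Delaunay triangulation of a flat cone surface with vertex set the singular points, as in Troyanov's work on Euclidean surfaces with conical singularities, or Masur--Smillie) rather than rely on your straightening sketch: straightening is delicate because on a cone surface the geodesic representative of an edge rel endpoints can pass through a cone point of angle greater than $2\pi$ (so straightened edges need not be embedded segments avoiding the singularities), and straightened edges of a fine topological triangulation can degenerate or overlap. With that citation in place, your argument is complete.
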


\begin{proof} The angle defect, that is, the Gaussian curvature, at generic points is zero. In general, the angle defect is computed as the integral of the Gaussian curvature by taking a circle centered at the cone point. The integral equals the angular defect at the vertex and we say that the curvature is concentrated at the cone points. Then for the total curvature, we have $2 \pi \chi = \sum (2 \pi - \theta_i),$ hence $2 \pi (2 - 2 g) = 2 n \pi - \sum \theta_i.$
\end{proof}

The following theorem considers not only the topology of a surface but also its conformal type. Given distinguished points and cone angles on a compact Riemann surface that satisfies Proposition~\ref{prop: cone met}, there exists a unique cone metric that is compatible with the conformal structure of the given surface.

\begin{thm*} [Schwarz-Christoffel; Troyanov]
Let $Y$ be a compact Riemann surface of genus $g.$ Let $p_1, \ldots , p_n$ be preassigned points on $Y,$ and let $\theta_1, \ldots , \theta_n \in \R$ satisfy $\sum \theta_i = 2 \pi (2 g - 2 + n).$ Then there exists a cone metric on $Y$ compatible with the conformal structure of $Y$ where $p_i$ are the cone points with respective cone angles $\theta_i.$ Moreover the cone metric is uniquely defined up to dilation.
\end{thm*}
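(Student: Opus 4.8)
The plan is to convert this into a linear elliptic problem, exploiting the fact that prescribing a \emph{flat} metric (rather than one of prescribed variable curvature) makes the Liouville nonlinearity disappear. Fix once and for all a smooth conformal background metric $g_0$ on $Y$ with Gaussian curvature $K_0$, and write the sought metric as $g = e^{2v} g_0$ for an unknown real function $v$ on $Y$, allowed to have logarithmic singularities at the $p_i$. Under a conformal change the curvature transforms by $K_g\, dA_g = (K_0 - \Delta_0 v)\, dA_0$, where $\Delta_0$ is the Laplace–Beltrami operator of $g_0$ (with $\Delta_0=\mathrm{div}\,\mathrm{grad}$, so that $\int_Y \Delta_0 v\, dA_0 = 0$). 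We want $g$ to be flat away from the $p_i$ while carrying a Euclidean cone of angle $\theta_i$ at each $p_i$, and such a cone contributes its angular defect $(2\pi - \theta_i)$ to the distributional curvature at $p_i$. Thus the equation for $v$ becomes the \emph{linear} Poisson equation
\[ \Delta_0 v \;=\; K_0 \;-\; \sum_{i=1}^n (2\pi - \theta_i)\,\delta_{p_i} \]
in the distributional sense. This linearity is the key simplification: the whole problem is now governed by the Laplacian of the fixed background metric, with no exponential term.

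Next I would extract the solvability condition and check it is exactly the hypothesis. Pairing the equation with the constant function $1$ and using $\int_Y \Delta_0 v\, dA_0 = 0$ forces $\int_Y K_0\, dA_0 = \sum_i(2\pi-\theta_i)$. By Gauss–Bonnet $\int_Y K_0\, dA_0 = 2\pi\chi(Y) = 2\pi(2-2g)$, while the right-hand side equals $2\pi n - \sum_i\theta_i$; these coincide precisely when $\sum_i\theta_i = 2\pi(2g-2+n)$, which is our assumption (equivalently Proposition~\ref{prop: cone met}). Conversely, when this holds the source on the right is orthogonal to the constants, which span both the kernel and the cokernel of the self-adjoint operator $\Delta_0$ on the closed surface $Y$; by the Fredholm alternative — equivalently Hodge theory, or the Green's function of $g_0$ — a solution $v$ exists and is unique up to an additive constant. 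That additive constant rescales $g = e^{2v}g_0$ by a constant factor, which is exactly a dilation, and two solutions with identical cone data differ by a globally harmonic (hence constant) function; this yields the asserted uniqueness up to dilation.

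The remaining point, and the one requiring the most care, is the local analysis at the cone points: justifying the distributional equation rigorously and confirming that the solution produces the prescribed angle rather than merely some cone singularity. I would handle this by peeling off explicit local models. Near $p_i$ pick a conformal coordinate $z$ with $z(p_i)=0$, set $\beta_i := \tfrac{\theta_i}{2\pi}-1$, and subtract a smoothly cut-off copy of $\beta_i\log|z|$ from $v$. Since $\Delta(\tfrac{1}{2\pi}\log|z|)=\delta_0$ in the plane, this removes each Dirac mass and leaves a function $w$ solving $\Delta_0 w = F$ with $F$ smooth and $\int_Y F\,dA_0 = 0$; elliptic regularity then makes $w$ smooth, so $v$ is smooth off the $p_i$ with $v \sim \beta_i\log|z|$ near each $p_i$. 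Consequently $g = e^{2v}g_0$ is flat away from the $p_i$ and locally isometric to $|z|^{2\beta_i}|dz|^2$, i.e. a Euclidean cone of angle $2\pi(1+\beta_i) = \theta_i$, so $g$ is a genuine cone metric of the required type. The main obstacle is exactly this matching of the prescribed angle to the logarithmic coefficient together with the rigorous treatment of the singular source, and the subtraction-of-models device is designed to dispatch both cleanly.
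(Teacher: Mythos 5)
The paper itself offers no proof of this statement: it is quoted as a known result of Troyanov \cite{troyanov1991prescribing} (stated in the introduction as the Schwarz--Christoffel/Troyanov theorem), so there is no internal argument to compare yours against. On its own merits, your proof is correct and is essentially the standard argument for the flat case of Troyanov's theorem: writing $g = e^{2v}g_0$ turns prescribing zero curvature with conical singularities into the \emph{linear} Poisson equation $\Delta_0 v = K_0 - \sum_i (2\pi-\theta_i)\delta_{p_i}$; Gauss--Bonnet shows the hypothesis $\sum_i \theta_i = 2\pi(2g-2+n)$ is precisely the Fredholm solvability condition (orthogonality of the source to the constants); and the subtraction of cut-off copies of $\beta_i\log|z|$, $\beta_i = \tfrac{\theta_i}{2\pi}-1$, simultaneously makes the distributional equation rigorous and pins the logarithmic coefficient to the prescribed angle. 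Two points deserve a little more care. First, your construction yields a genuine cone only when $\beta_i > -1$, i.e.\ $\theta_i > 0$; the statement as printed allows arbitrary real $\theta_i$, but positivity is clearly intended (compare the version of the theorem in the paper's introduction, where the $\theta_i$ are assumed positive). Second, your phrase ``locally isometric to $|z|^{2\beta_i}|dz|^2$'' is slightly off as written: the metric near $p_i$ is $e^{2w}|z|^{2\beta_i}|dz|^2$ with $w$ smooth, and one should note that a flat conformal metric with this asymptotic is an \emph{exact} cone of angle $2\pi(1+\beta_i)$, e.g.\ by integrating the developing map $\Phi(z)=\int z^{\beta_i}e^{f(z)}\,dz$ where $w+iw^*=f$ is holomorphic. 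The same local fact is needed, in the converse direction, for your uniqueness claim: to conclude that \emph{any} conformal cone metric with the given data has conformal factor of the form $\beta_i\log|z| + (\text{smooth})$, so that the difference of two solutions is globally harmonic and hence constant. Both are standard, but citing or sketching them would close the argument completely.
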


As we have cone metrics on $Y,$ now we construct cone metrics on $X$ by pulling back cone metrics on $Y,$ then determine which pullbacks give us translation structures on $X.$

\begin{defn} Let $X \rightarrow Y$ be a covering. We say a cone metric on $Y$ is \textit{admissible} if its pullback yields a translational structure on $X.$
\end{defn}

In the following lemma and theorem, $Y$ is an $n$-punctured sphere and $X$ is a $d$-fold cyclically branched cover over $Y$ with branching indices $(d_1, \ldots , d_n).$ To find cone metrics on $Y$ whose lifts yield translational structures on $X,$ we need to determine the holonomy of the metrics for all closed curves on $X.$ The following lemma gives us a necessary and sufficient condition for admissible cone metrics.

\begin{lemma}
Let $\gamma_i$ be a branch cut on $Y$ from a base point $q$ to $p_i.$ Let $\Gamma$ be an oriented closed curve on $Y$ and let $c_i = \iota (\Gamma, \gamma_i)$ be the intersection number counting multiplicities. Then the lift $\widetilde{\Gamma}$ is closed in $X$ if and only if $\sum c_i d_i \equiv 0 \pmod d.$
\end{lemma}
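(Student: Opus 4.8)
The plan is to track how the sheet index of a lift changes as we traverse $\Gamma$, using the gluing rule that defines $X$. Recall that $X$ is assembled from sheets $Y_1, \ldots, Y_d$ by identifying $\gamma_i^l$ on $Y_j$ with $\gamma_i^r$ on $Y_{j + d_i \pmod d}$. Conceptually, the covering $X \rightarrow Y$ is encoded by a monodromy representation $\rho : \pi_1(Y, q) \rightarrow \mathbb{Z}/d\mathbb{Z}$ that sends the class of a small positively oriented loop around $p_i$ to $d_i$, which is exactly the lifting behavior recorded in the proof of Theorem~\ref{thm: wdefd cyc br cov}. Since the sheets are permuted by the single translation $j \mapsto j + \rho([\Gamma])$, the lift $\widetilde{\Gamma}$ is closed precisely when $\rho([\Gamma]) \equiv 0 \pmod d$, independently of which lift we chose. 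Thus the lemma reduces to the identity $\rho([\Gamma]) \equiv \sum_i c_i d_i \pmod d$.

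First I would fix a convenient representative of $\Gamma$: after a free homotopy I may assume $\Gamma$ is transverse to every $\gamma_i$, meets them in finitely many interior points, and avoids the common endpoint $q.$ Whether $\widetilde{\Gamma}$ closes up is a homotopy invariant of $\Gamma$ because it is governed by $\rho([\Gamma]);$ the quantity $\sum_i c_i d_i \bmod d$ is likewise invariant, since the only homotopies that alter the individual intersection numbers $c_i$ are those sweeping $\Gamma$ across $q,$ and such a move changes every $c_i$ by the same $\pm 1,$ hence changes $\sum_i c_i d_i$ by $\pm \sum_i d_i \equiv 0 \pmod d$ by Theorem~\ref{thm: wdefd cyc br cov}. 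So it suffices to verify the identity for one transverse representative.

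Next I would analyze a single transverse crossing. Fix orientation conventions so that the sign $\varepsilon = \pm 1$ of a crossing of $\Gamma$ with $\gamma_i$ agrees with the local intersection sign, with $\varepsilon = +1$ exactly when $\Gamma$ passes from the left side $\gamma_i^l$ to the right side $\gamma_i^r.$ By the gluing rule, emerging through $\gamma_i^l$ of $Y_j$ lands on $\gamma_i^r$ of $Y_{j + d_i},$ so a $+1$ crossing advances the sheet index by $d_i$ and, symmetrically, a $-1$ crossing advances it by $-d_i.$ Summing over all crossings as we traverse $\Gamma$ once, starting from a lift on sheet $Y_{j_0},$ the total change of sheet index is
$$\Delta \;=\; \sum_{\textrm{crossings}} \varepsilon\, d_i \;=\; \sum_{i=1}^n \Big( \sum_{\textrm{crossings with } \gamma_i} \varepsilon \Big) d_i \;=\; \sum_{i=1}^n c_i d_i \pmod d,$$
since $c_i = \iota(\Gamma, \gamma_i)$ is by definition the signed count of crossings with $\gamma_i.$ The lift returns to its starting sheet, i.e. $\widetilde{\Gamma}$ is closed, if and only if $\Delta \equiv 0,$ which is the claimed condition $\sum_i c_i d_i \equiv 0 \pmod d.$

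I expect the main obstacle to be a matter of conventions rather than substance: pinning down the orientation of the branch cuts and the sign of the intersection pairing so that a left-to-right crossing corresponds to both $\varepsilon = +1$ and the sheet shift $+d_i,$ and confirming that these two conventions are mutually consistent. The only genuinely topological subtlety is the behavior at the base point $q,$ where all cuts meet; handling it is exactly where the well-definedness hypothesis $\sum_i d_i \equiv 0 \pmod d$ re-enters, a reassuring consistency check with Theorem~\ref{thm: wdefd cyc br cov}.
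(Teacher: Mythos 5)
Your proof is correct and is essentially the paper's own argument: the paper likewise tracks the sheet index of the lift, observing that each signed crossing of $\gamma_i$ shifts the sheet by $d_i$, so that $\widetilde{\Gamma}(1)$ lies in $Y_{j + \sum c_i d_i \pmod d}$ and closedness is equivalent to $\sum c_i d_i \equiv 0 \pmod d.$ Your monodromy framing, sign-convention checks, and the homotopy-invariance discussion (sweeping across $q$) add rigor, but the core sheet-shift computation is the same.
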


\begin{proof} Let $\Gamma: [0, 1] \rightarrow Y$ be a closed curve and $\widetilde{\Gamma}(0) \in Y_j$ for some $j \in \{1, \ldots , d\}.$ If $c_1 = \iota (\Gamma, \gamma_1) = 1$ and $c_i = 0$ for $i = 2, \ldots, n,$ then we have $\widetilde{\Gamma}(1) \in Y_{j + d_1 \pmod d}.$ Still assuming $c_i = 0$ for $i = 2, \ldots, n,$ now take $c_1$ to be arbitrary. Then we have $\widetilde{\Gamma}(1) \in Y_{j + c_1 d_1 \pmod d}.$ In other words, $\widetilde{\Gamma}$ is closed in $X$ if and only if $c_1 d_1 \equiv 0 \pmod d.$ Now, in general, for arbitrary $c_i = \iota (\Gamma, \gamma_i),$ $\widetilde{\Gamma}(1)$ lies in $Y_{j + \sum c_i d_i \pmod d}.$ In other words, $\widetilde{\Gamma}$ is closed in $X$ if and only if $\sum c_i d_i \equiv 0 \pmod d.$ 
\end{proof}

\begin{thm} 
Let $Y$ be an $n$-punctured sphere and $X$ be a $d$-fold cyclically branched cover over $Y$ determined by branching indices $(d_1, \ldots , d_n).$ Let $\gamma_i$ be the branch cut on $Y$ from $q$ to $p_i.$ Let $\theta_i = a_i \cdot \frac{2 \pi}{d},$ for some $a_i,$ be respective cone angles at $p_i$ that satisfy $\sum\theta_i = 2 \pi (2 g - 2 + n).$ Then the cone metric on $Y$ defined by $(\theta_1, \ldots , \theta_n)$ is admissible if and only if $a_i \in \mathbb{Z},$ and also $\sum C_i \cdot d_i \equiv 0 \pmod d$ implies $\sum C_i \cdot a_i \equiv 0 \pmod d,$ where $C_i \in \mathbb{Z}$ implies that $\Gamma = C_1 \Gamma_1 + \cdots + C_n \Gamma_n$ is a positively oriented curve in $Y,$ where $\Gamma_i$ a closed curve on $Y$ with winding number $\textrm{Ind}_{\Gamma_i}(p_j) = \delta_{i j}.$
\end{thm}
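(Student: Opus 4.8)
The plan is to characterize admissibility through the linear (rotational) holonomy of the flat cone metric, and then transport that condition across the covering using the lifting lemma just established. Recall that a flat cone metric defines a translation structure precisely when its linear holonomy is trivial, that is, when the rotational part of the holonomy around every closed curve in the complement of the cone points is a multiple of $2\pi$; in that case every cone angle is forced to be a positive integer multiple of $2\pi$ and the metric is the modulus of a holomorphic $1$-form. So I would first reduce ``the pullback metric on $X$ is a translation structure'' to ``the rotational holonomy of the pullback vanishes on the first homology of the complement of the cone points of $X$.''

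Next I would compute the holonomy on $Y$ explicitly. Since $Y$ is an $n$-punctured sphere, $H_1(Y;\mathbb{Z})$ is generated by the loops $\Gamma_i$ encircling $p_i$ subject to $\sum_i \Gamma_i = 0$, and the rotational holonomy of the cone metric around $\Gamma_i$ is exactly the cone angle $\theta_i$ modulo $2\pi$. Hence around a general loop $\Gamma = \sum_i C_i \Gamma_i$, with winding numbers $C_i = \mathrm{Ind}_\Gamma(p_i) = \iota(\Gamma, \gamma_i)$, the rotational holonomy equals $\sum_i C_i \theta_i \pmod{2\pi}$. Because the covering map $\pi\colon X \to Y$ is a local isometry away from the branch points, the holonomy of the pullback around any lift $\widetilde\Gamma$ equals $\sum_i C_i \theta_i \pmod{2\pi}$, independently of which lift is chosen.

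Now I would invoke the preceding lemma, which says that $\Gamma = \sum C_i \Gamma_i$ lifts to a closed loop $\widetilde\Gamma$ on $X$ if and only if $\sum_i C_i d_i \equiv 0 \pmod d$. Since every closed curve on $X$ projects to some such $\Gamma$, and conversely every $\Gamma$ with $\sum C_i d_i \equiv 0$ arises as such a projection, the vanishing of the rotational holonomy on all of $X$ is equivalent to the implication
$$\sum_i C_i d_i \equiv 0 \pmod d \ \Longrightarrow\ \sum_i C_i \theta_i \equiv 0 \pmod{2\pi}.$$
Substituting $\theta_i = a_i\,\tfrac{2\pi}{d}$ turns $\sum_i C_i \theta_i \equiv 0 \pmod{2\pi}$ into $\sum_i C_i a_i \equiv 0 \pmod d$, which is the second stated condition; chaining these equivalences yields both directions of the ``if and only if'' at once. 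The integrality condition $a_i \in \mathbb{Z}$ then falls out as the special case $\Gamma = d\,\Gamma_i$: this loop satisfies $d\,d_i \equiv 0 \pmod d$, so its holonomy $d\,\theta_i = 2\pi a_i$ must be a multiple of $2\pi$, forcing $a_i \in \mathbb{Z}$, and it is exactly this integrality that makes the congruence $\sum C_i a_i \equiv 0 \pmod d$ meaningful.

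The step I expect to require the most care is the reduction of ``translation structure'' to ``trivial rotational holonomy'' at the cone points of $X$: I must confirm that the holonomy condition around the preimages $\widetilde{p_i}$, where the pullback metric has cone angle $\tfrac{d}{\gcd(d,d_i)}\theta_i$, is genuinely subsumed by the displayed implication rather than being an extra hypothesis. This is handled by noting that a small loop about $\widetilde{p_i}$ projects to $\tfrac{d}{\gcd(d,d_i)}\Gamma_i$, whose coefficient vector already satisfies $\sum_j C_j d_j \equiv 0 \pmod d$; thus demanding trivial holonomy there forces $\gcd(d,d_i)\mid a_i$, a consequence that is consistent with, and indeed implied by, the second condition. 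The remaining bookkeeping, namely matching $\iota(\Gamma,\gamma_i)$ with the winding numbers $C_i$ and checking orientations so that the two congruences carry matching sign conventions, is routine.
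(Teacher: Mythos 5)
Your proposal is correct and follows essentially the same route as the paper's proof: both identify admissibility with trivial rotational holonomy of the pulled-back metric, use the preceding lemma to characterize which curves $\Gamma = C_1\Gamma_1 + \cdots + C_n\Gamma_n$ have closed lifts (namely $\sum C_i d_i \equiv 0 \pmod d$), compute the holonomy along such lifts as $\sum C_i \theta_i \pmod{2\pi}$, and extract integrality of the $a_i$ from the $d$-fold concatenation of $\Gamma_i$. Your explicit verification of the holonomy around the cone points $\widetilde{p_i}$ of $X$ is a detail the paper leaves implicit, but it does not alter the argument.
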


\begin{proof}
Let $(\theta_1, \ldots , \theta_n)$ define a cone metric on $Y$ that is compatible with its conformal type. Given a closed curve $\Gamma_i$ in $Y,$ whose winding number is $\textrm{Ind}_{\Gamma_i}(p_j) = \delta_{i j},$ its lift $\widetilde{\Gamma_i}$ on $X$ has an argument of $\theta_i.$ By the previous lemma, the lift of the $d$-fold concatenation of $\Gamma_i$ has trivial holonomy on $X$ if and only if $d \cdot \theta_i$ is a multiple of $2 \pi,$ therefore $a_i$ must be an integer. As we can write any closed curve $\Gamma$ on $Y$ as a concatenation of $\Gamma_i,$ we show that its lift $\widetilde{\Gamma}$ is closed in $X$ if and only if the arguments derived from each $\widetilde{\Gamma_i}$ add up to a multiple of $2 \pi.$ In other words, $\widetilde{\Gamma}$ is closed in $X$ if and only if $\sum C_i \cdot d_i \equiv 0 \pmod d$ implies that $\Gamma = C_1 \Gamma_1 + \cdots + C_n \Gamma_n$ closed in $Y.$ In this case, we say a cone metric given by $(\theta_1, \ldots , \theta_n)$ is admissible. Given an admissible cone metric, analytic continuation along $\widetilde{\Gamma}$ yields trivial holonomy on $X,$ hence $$0 \equiv \sum C_i \cdot \theta_i \equiv \sum C_i \cdot a_i \cdot \frac{2 \pi}{d} = \frac{2 \pi}{d} \sum C_i \cdot a_i \pmod {2 \pi}$$ which implies $\sum C_i \cdot a_i \equiv 0 \pmod d.$\\

To show that a cone metric defined on $Y$ is admissible, we need to show that its pullback is a well-defined 1-form on $X.$ Let a cone metric on $Y$ be defined by $(\theta_1, \ldots , \theta_n) = \frac{2 \pi}{d} (a_1, \ldots , a_n)$ and $\Gamma$ be a closed curve in $Y.$ If $a_i \in \mathbb{Z}$ and $0 \equiv \sum C_i \cdot a_i \pmod d$ for every $\widetilde{\Gamma}$ on $X,$ then $\textrm{arg} \int_{\widetilde{\gamma}} \omega$ is a multiple of $2 \pi,$ where $\omega$ is the pullback of the cone metric given by the cone angles $\theta_i.$ The trivial holonomy implies well-definedness of $\omega$ on $X,$ hence the cone metric is admissible.
\end{proof}

Specifically if $a_i \equiv d_i \pmod d$ for all $i,$ then the cone metric defined by $(\theta_1, \ldots , \theta_n) = \frac{2 \pi}{d} (d_1, \ldots , d_n)$ is admissible. Given an admissible cone metric, we propose a natural way of finding other admissible cone metrics using the following definition.

\begin{defn} Given branching indices $(d_1, \ldots , d_n),$ we say $a \in \{1, \ldots, d - 1\}$ is a \textit{multiplier} if $a (d_1, \ldots , d_n) := (a \cdot d_1 \pmod d, \ldots , a \cdot d_n \pmod d)$ is admissible. For simplicity, we denote $a (d_1,\ldots , d_3)$ as $(a_1, \ldots , a_3)$ where $a_i \in \{0, \ldots , d - 1\}.$ 
\end{defn}

The following corollary shows how the multipliers give us a natural way of finding other admissible cone metrics.

\begin{cor}
\label{cor: admissible metric} Given branching indices $(d_1, \ldots , d_n),$ let $a_i \equiv a \cdot d_i \pmod d$ for some $a \in \mathbb{Z}.$ If $a_i > 0$ for all $i$ and $\sum a_i = d (n - 2)$ then the cone metric given by cone angles $\frac{2 \pi}{d} (a_i, \ldots , a_n)$ is admissible.
\end{cor}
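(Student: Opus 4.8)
The plan is to reduce the statement directly to the admissibility criterion proved in the preceding theorem, which asks for exactly two things: that each $a_i$ be an integer, and that $\sum C_i d_i \equiv 0 \pmod d$ force $\sum C_i a_i \equiv 0 \pmod d$ for every integer tuple $(C_1, \ldots , C_n).$ Before applying that criterion I would first confirm that the prescribed angles actually cut out a cone metric on $Y.$ Since $Y$ is an $n$-punctured sphere, its genus is $0,$ so Proposition~\ref{prop: cone met} (Gauss--Bonnet) requires $\sum \theta_i = 2\pi(n-2)$; writing $\theta_i = \frac{2\pi}{d} a_i$ this reads $\sum a_i = d(n-2),$ which is precisely the stated hypothesis. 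Together with the positivity assumption $a_i > 0,$ Troyanov's theorem then provides a cone metric on $Y,$ unique up to dilation, with cone points $p_i$ of cone angle $\theta_i$ and compatible with the conformal structure. This is the metric whose admissibility we must establish.

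Verifying the two conditions of the admissibility theorem is then short. Integrality $a_i \in \mathbb{Z}$ holds by construction, as $a_i$ is a residue of $a \cdot d_i$ modulo $d.$ The remaining divisibility implication is the only computation, and the defining relation $a_i \equiv a \cdot d_i \pmod d$ makes it automatic: for any $(C_1, \ldots , C_n)$ with $\sum C_i d_i \equiv 0 \pmod d$ one has
$$\sum C_i a_i \equiv \sum C_i (a \cdot d_i) = a \sum C_i d_i \equiv 0 \pmod d .$$
Both hypotheses of the admissibility theorem now hold, so the cone metric defined by $\frac{2\pi}{d}(a_1, \ldots , a_n)$ is admissible, as claimed.

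I do not expect a genuine obstacle here: the corollary is really a repackaging of the admissibility theorem in the case where the cone-angle vector is a scalar multiple (modulo $d$) of the branching vector $(d_1, \ldots , d_n),$ and in that case the congruence needed for admissibility is inherited from the trivial congruence $a \cdot 0 \equiv 0.$ The only points demanding care are bookkeeping ones: recognizing $\sum a_i = d(n-2)$ as the genus-zero Gauss--Bonnet constraint so that Troyanov's existence result applies, and using positivity of the $a_i$ to guarantee that the prescribed data are honest positive cone angles rather than mere formal residues.
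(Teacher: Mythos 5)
Your proof is correct and follows essentially the same route as the paper: the paper's entire proof is ``Apply Proposition~\ref{prop: cone met} and let $g = 0$,'' i.e.\ it identifies $\sum a_i = d(n-2)$ as the genus-zero Gauss--Bonnet condition and leaves the reduction to the preceding admissibility theorem implicit. You simply spell out what the paper suppresses --- integrality of the $a_i$ and the automatic congruence $\sum C_i a_i \equiv a \sum C_i d_i \equiv 0 \pmod d$ --- which is a faithful completion of the intended argument rather than a different one.
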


\begin{proof} Apply Proposition \ref{prop: cone met} and let $g = 0.$
\end{proof}

We revisit Example \ref{exmp: 124} and Example \ref{exmp: 125} and find an explicit basis of holomorphic 1-forms on each covering that arise from multipliers of branching indices. 

\begin{exmp} \label{exmp: 124_} (Continued from Example \ref{exmp: 124}.) Let $X$ be the sevenfold cyclically branched cover over a thrice-punctured sphere defined by branching indices $(1, 2, 4).$ We have multipliers 1, 2, and 4, which yield admissible cone metrics defined by cone angles $\frac{2 \pi}{7} (1, 2, 4),$ $\frac{2 \pi}{7} (2, 4, 1),$ and $\frac{2 \pi}{7} (4, 1, 2),$ respectively. Then, we have holomorphic 1-forms that have zeros of order $\frac{\theta_i}{2 \pi} - 1$ at each $p_i.$ They form a basis of holomorphic 1-forms $\{\omega_i\}$ on $X$ with divisors $$(\omega_1) = \widetilde{p_2} + 3 \widetilde{p_3}, \qquad (\omega_2) = \widetilde{p_1} + 3 \widetilde{p_2}, \qquad (\omega_3) = 3 \widetilde{p_1} + \widetilde{p_3}.$$
\end{exmp}

\begin{exmp} \label{exmp: 125_} (Continued from Example \ref{exmp: 125}.) Let $X$ be an eightfold cyclically branched cover over a thrice-punctured sphere defined by $(1, 2, 5).$ Then we get admissible cone metrics with cone angles $\frac{2 \pi}{8} (1, 2, 5),$ $\frac{2 \pi}{8} (2, 4, 2),$ and $\frac{2 \pi}{8} (5, 2, 1).$ The three admissible cone metrics induce a basis of holomorphic 1-forms $\{\omega_i\}$ on $X$ with divisors $(\omega_1) = 4 \widetilde{p_3}, (\omega_2) = \widetilde{p_1} + \widetilde{p_2}_1 + \widetilde{p_2}_2 + \widetilde{p_3},$ and $(\omega_3) = 4 \widetilde{p_1}.$ Note that 4 is not a multiplier as a cone metric derived from $\frac{2 \pi}{8} (4, 0, 4)$ is not admissible. The cone metric induces a meromorphic 1-form whose divisor is $3 \widetilde{p_1} - \widetilde{p_2}_1 - \widetilde{p_2}_2 + 3 \widetilde{p_3}.$ This shows why $a_i > 0$ is necessary.
\end{exmp}

From the two examples, one might ask whether all admissible cone metrics arise from multiples of $\frac{2 \pi}{d} (d_i)$. We prove that the answer is yes, if $n = 3.$

\begin{thm}
\label{thm: multipliers}
Let $X \rightarrow Y$ be a $d$-fold cyclically branched cover over a thrice-punctured sphere with branching indices $(d_1, d_2, d_3).$ Then there are exactly $g$ admissible cone metrics that arise from multipliers. 
\end{thm}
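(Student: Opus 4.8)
The plan is to turn the statement into a counting problem. By Corollary~\ref{cor: admissible metric} together with Example~\ref{exmp: 125_}, a multiplier $a \in \{1, \dots, d-1\}$ produces an admissible cone metric precisely when the reduced tuple $(a_1, a_2, a_3)$, with $a_i \equiv a\, d_i \pmod d$ chosen in $\{0, \dots, d-1\}$, satisfies $a_i > 0$ for all $i$ (so the induced $1$-form is genuinely holomorphic rather than meromorphic) and $a_1 + a_2 + a_3 = d$ (so that, by Gauss--Bonnet on the sphere, i.e.\ Proposition~\ref{prop: cone met} with $g = 0$, the angles $\theta_i = \tfrac{2\pi}{d}a_i$ define an actual flat cone metric on $Y$). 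So first I would record this characterization and observe that $\sum a_i \equiv 0 \pmod d$ holds automatically, since $\sum d_i \equiv 0 \pmod d$ by Theorem~\ref{thm: wdefd cyc br cov}; when all $a_i$ are positive each lies in $\{1, \dots, d-1\}$, so the sum lies strictly between $0$ and $3d$ and hence $\sum a_i \in \{d, 2d\}$. Thus the admissible multipliers are exactly the elements of $S := \{a : a_i > 0 \text{ for all } i\}$ with $\sum a_i = d$.

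The core of the argument is an involution. I would consider $\iota(a) = d - a$ on $S$. Since $0 < a_i < d$, one computes $(d-a)_i = d - a_i$, again positive, so $\iota$ maps $S$ to itself, is an involution, and sends a tuple with $\sum a_i = d$ to one with sum $3d - d = 2d$ and vice versa. The key point is that $\iota$ is fixed-point-free on $S$: a fixed point needs $a = d/2$ (so $d$ even), which keeps every $a_i = (d/2)d_i \bmod d$ positive only if every $d_i$ is odd; but then $\sum d_i$ is odd, contradicting $d \mid \sum d_i$ with $d$ even. Hence $\iota$ pairs the elements of $S$, and in each pair exactly one member has sum $d$, so the number of admissible multipliers equals $\tfrac12 |S|$.

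It remains to show $|S| = 2g$. Writing $B_i = \{a \in \mathbb{Z}/d\mathbb{Z} : a\,d_i \equiv 0 \pmod d\}$, which has exactly $\gcd(d, d_i)$ elements, the complement of $S$ in $\mathbb{Z}/d\mathbb{Z}$ is $B_1 \cup B_2 \cup B_3$ (note $0$ lies in every $B_i$), so $|S| = d - |B_1 \cup B_2 \cup B_3|$. I would evaluate the union by inclusion--exclusion, and this is where $n = 3$ enters decisively: $\gcd(d, d_i, d_j) = 1$ for every pair. Indeed, if a prime $p$ divided $d$, $d_i$, and $d_j$, then from $d_1 + d_2 + d_3 \equiv 0 \pmod d$ and $p \mid d$ it would also divide the third index, contradicting connectedness $\gcd(d_1, d_2, d_3) = 1$. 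This forces each pairwise intersection $B_i \cap B_j$ (and the triple intersection) to equal $\{0\}$, so inclusion--exclusion collapses to $|B_1 \cup B_2 \cup B_3| = \sum_i \gcd(d, d_i) - 3 + 1 = \sum_i \gcd(d, d_i) - 2$. Therefore $|S| = d + 2 - \sum_i \gcd(d, d_i)$, which by Proposition~\ref{prop: g(X)} is exactly $2g$, giving $\tfrac12|S| = g$ admissible multipliers.

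I expect the main obstacle to be pinning down the precise characterization of an admissible multiplier (the positivity and sum-$d$ conditions) cleanly from the definitions, and in particular distinguishing the genuinely holomorphic metrics (sum $d$) from their complementary partners (sum $2d$, which fail Gauss--Bonnet on the sphere); once that is in place, the fixed-point-free involution and the $n=3$-driven collapse of inclusion--exclusion are the two engines that deliver the count. A secondary point requiring care is the fixed-point analysis when $d$ is even, where I would make sure the parity contradiction is airtight.
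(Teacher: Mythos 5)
Your proof is correct and follows essentially the same route as the paper: you count the multiples of $(d_1,d_2,d_3)$ with all entries positive, getting $d + 2 - \sum_i \gcd(d,d_i) = 2g$ of them, and then use the involution $a \mapsto d-a$ (the paper's ``multiplication by $d-1$'') to pair each admissible tuple (sum $d$) with a non-admissible one (sum $2d$), yielding exactly $g$ multipliers. The only differences are presentational: your inclusion--exclusion formalizes the paper's ``at most one $a_i$ can vanish'' step, and you additionally verify two points the paper leaves implicit, namely that $\gcd(d,d_i,d_j)=1$ for each pair (via the prime argument using $d \mid \sum d_i$) and that the involution is fixed-point-free on $S$.
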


\begin{proof}
Given $d$ and $(d_1, d_2, d_3),$ we have $d - 1$ multiples $(a_1, a_2, a_3)$ of the branching indices. Out of the $d - 1$ multiples, we discard those if $a_i = 0,$ for any $i.$ Since $\gcd(d_1, d_2, d_3) = 1,$ we can have at most one $a_i$ be zero among $a_1, a_2,$ and $a_3,$ for a given multiple of $(d_1, d_2, d_3).$ This results in removing $(\gcd(d, d_1) - 1) + (\gcd(d, d_2) - 1) + (\gcd(d, d_3) - 1)$ many multiples out of $d - 1.$ Note that if $\gcd(d, d_i) = 1$ for all $i,$ then none of the multiples are removed at this step. Now we are left with $2 g$ multiples of $(d_1, d_2, d_3)$ (Proposition \ref{prop: g(X)}). We show that there is an involution on the remaining $2 g$ multiples of $(d_1, d_2, d_3),$ that maps the admissible cone metrics to non-admissible cone metrics. The involution is a multiplication by $d - 1.$ That is, if $\frac{2 \pi}{d} (d_1, d_2, d_3)$ yields an admissible cone metric then $$\begin{array}{rl}(d_1, d_2, d_3) \xrightarrow{\times (d - 1)} & ((d - 1) d_1, (d - 1) d_2, (d - 1) d_3) \\
\equiv & (d - d_1, d - d_2, d - d_3) \pmod d\end{array}$$ does not yield an admissible cone metric because $\sum (d - d_i) = 2 d \neq d (n - 2)$ due to the corollary.
\end{proof}

However, the theorem does not apply in general cases if $n \geq 4.$ For example, let $X$ be a twofold cover over a 4-punctured sphere with branching indices $(1, 1, 1, 1).$ Then, the covering is a genus one surface (Proposition \ref{prop: g(X)}). We observe 1 is the only multiplier and $\frac{2 \pi}{2} (1, 1, 1, 1)$ yields the only admissible cone metric. On the other hand, if $X$ is a fourfold cover with branching indices $(1, 1, 1, 1),$ then the genus of the covering is three. However, the only multiplier is 2 where the cone angles are given by $\frac{2 \pi}{4} (2, 2, 2, 2).$ Other admissible cone metrics arise from $(5, 1, 1, 1),$ $(1, 5, 1, 1),$ $(1, 1, 5, 1),$ and $(1, 1, 1, 5).$ This does not change the construction of $X,$ since $(5, 1, 1, 1)$ $\equiv$ $(1, 5, 1, 1)$ $\equiv$ $(1, 1, 5, 1)$ $\equiv$ $(1, 1, 1, 5)$ $\equiv$ $(1, 1, 1, 1) \pmod 4.$ To form a basis, one needs to make a choice of admissible cone metrics. Without loss of generality, pick $p_1$ then we have a basis of 1-forms that have different orders of zero at $p_1,$ whose divisors are $$\begin{array}{ccccc} (\omega_1) = & & 4 \widetilde{p_2} & & \\
(\omega_2) = & \widetilde{p_1} & + \widetilde{p_2} & + \widetilde{p_3} & + \widetilde{p_4}\\
(\omega_3) = & 4 \widetilde{p_1} & & & \end{array}.$$

\begin{conj} Given a $d$-fold cyclically branched covering over an $n$-punctured sphere defined by branching indices $(d_1, \ldots , d_n),$ there are at least $g$ admissible cone metrics that are of the form $a (d_1, \ldots , d_n) := (a \cdot d_1 \pmod d, \ldots , a \cdot d_n \pmod d).$ \end{conj}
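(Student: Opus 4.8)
The plan is to recast each admissible cone metric of multiplier form as an eigenform for the cyclic deck group $\mathbb{Z}/d\mathbb{Z}$ and to count such metrics one eigenvalue at a time. For $\ell \in \{1, \dots, d-1\}$ I write $a_i(\ell) = \ell d_i \bmod d$, and set $m(\ell) = \#\{\, i : a_i(\ell) \neq 0\,\}$ and $k(\ell) = \frac{1}{d}\sum_{i=1}^{n} a_i(\ell)$. Because $\sum_i d_i \equiv 0 \pmod d$, each partial sum $\sum_i a_i(\ell)$ is divisible by $d$, so $k(\ell) \in \mathbb{Z}_{\geq 0}$; and because $\gcd(d, d_1, \dots, d_n) = 1$ one checks $m(\ell) \geq 2$ and $k(\ell) \leq m(\ell) - 1$. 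The single quantity $v_\ell := m(\ell) - 1 - k(\ell) \geq 0$ is then the natural bookkeeping device: it equals the dimension of the $\zeta^\ell$-eigenspace of $H^0(X, \Omega^1)$ by the Chevalley--Weil formula, and a metric of the form $a(d_1, \dots, d_n)$ lives in the class $\ell = a$. I read ``of the form $a(d_1,\dots,d_n)$'' as ``congruent mod $d$ to $a(d_1,\dots,d_n)$,'' allowing the strictly positive representatives the text already uses (such as $(5,1,1,1)$ for $(1,1,1,1)$); under the strict reduced reading the statement fails for $n \geq 4$, as I explain below.

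First I would prove the global identity $\sum_{\ell=1}^{d-1} v_\ell = g$, which is where the arithmetic of the branching data enters. Summing the contribution of each puncture gives $\sum_\ell m(\ell) = nd - \sum_i \gcd(d,d_i)$ and $\sum_\ell k(\ell) = \tfrac{1}{2}\big(nd - \sum_i \gcd(d,d_i)\big)$, so that $\sum_\ell v_\ell = \tfrac{1}{2}\big(nd - \sum_i \gcd(d,d_i)\big) - (d-1)$. Rearranging the genus formula of Proposition \ref{prop: g(X)} shows this is exactly $g$.

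Next I would count the admissible metrics in each class $\ell$. Admissibility together with the Gauss--Bonnet constraint of Proposition \ref{prop: cone met} forces a tuple $(\tilde a_1, \dots, \tilde a_n)$ with $\tilde a_i \equiv \ell d_i \pmod d$, $\tilde a_i > 0$, and $\sum_i \tilde a_i = d(n-2)$. Writing $\tilde a_i = a_i(\ell) + d\,m_i$ converts this into counting nonnegative solutions of a single linear equation, of which there are $\binom{v_\ell + n - 2}{\,n-1\,}$; the admissibility of each lift follows from the congruence criterion of the earlier theorem, since reducing mod $d$ always returns $a(d_1,\dots,d_n)$. Because $\binom{v_\ell + n-2}{n-1} \geq v_\ell$ for every $\ell$, with equality precisely when $v_\ell \leq 1$, summing over $\ell$ produces at least $\sum_\ell v_\ell = g$ admissible metrics of multiplier form. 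When $n = 3$ one has $v_\ell \in \{0,1\}$, the inequality is an equality, and the count collapses to the exact value $g$ of Theorem \ref{thm: multipliers}.

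The main obstacle is not the computation but the passage from $n = 3$ to $n \geq 4$, and it is precisely the phenomenon the preceding example flags. As soon as some eigenspace satisfies $v_\ell \geq 2$ --- for the fourfold cover with indices $(1,1,1,1)$ one has $v_1 = 2$, yet only $\ell = 2$ reduces to a strictly positive tuple of sum $d(n-2)$ --- no single reduced multiplier can realize that eigenspace, so counting reduced tuples alone undershoots $g$. The argument survives only by counting the full congruence class of positive tuples, and the two delicate points are (i) checking that these lifts are genuinely admissible and pairwise distinct cone metrics rather than repetitions, and (ii) confirming that the combinatorial $v_\ell$ really is the eigenspace dimension, so that $\sum_\ell v_\ell = g$ is the honest genus and the bound $\binom{v_\ell+n-2}{n-1} \geq v_\ell$ is being summed against the correct total.
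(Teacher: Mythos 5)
The first thing to say is that the paper offers no proof of this statement: it is explicitly a conjecture, supported only by the exact count in the case $n=3$ (Theorem~\ref{thm: multipliers}) and by the machine check of all coverings with $g\le 5$ in Appendix~\ref{genera}. Your proposal therefore cannot be compared against a proof in the paper; what it can be compared against is the paper's intent, and on the two points of interpretation you are right: the sentence immediately following the conjecture (``we allow $a\cdot d_i \pmod d$ to be greater than $d-1$'') mandates exactly your congruence-class reading, and the literal reduced reading is indeed false, since for the fourfold cover over $(1,1,1,1)$ only $(2,2,2,2)$ is a reduced multiple with the correct angle sum, while $g=3$.

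Checking the argument itself, I find it correct, and it upgrades the conjecture to a theorem. The points you flag as delicate are both easily discharged. For (i): a tuple $\tilde a_i\equiv \ell d_i\pmod d$ with $\tilde a_i>0$ and $\sum_i\tilde a_i=d(n-2)$ is admissible by Corollary~\ref{cor: admissible metric} verbatim; distinct tuples give distinct cone-angle assignments at the $p_i$, hence distinct metrics; and no tuple lies in two classes, since $(\ell-\ell')d_i\equiv0\pmod d$ for all $i$ together with $\gcd(d_1,\dots,d_n)=1$ forces $\ell\equiv\ell'$. For (ii): nothing in your proof needs $v_\ell$ to be an eigenspace dimension. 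Your second paragraph already establishes $\sum_{\ell=1}^{d-1}v_\ell=g$ arithmetically --- the identity $\sum_\ell k(\ell)=\frac12\bigl(nd-\sum_i\gcd(d,d_i)\bigr)$ follows by pairing $\ell$ with $d-\ell$ --- and Proposition~\ref{prop: g(X)} does the rest, so the Chevalley--Weil gloss can be kept as motivation but carries no logical weight. The two steps worth writing out in full are the inequality $v_\ell\ge0$ (true because $k(\ell)$ is an integer and $k(\ell)<m(\ell)$) and the count $\binom{v_\ell+n-2}{n-1}$, which requires shifting $m_i\mapsto m_i-1$ at the indices with $a_i(\ell)=0$ before applying stars and bars; with that shift the count is right, vanishing exactly when $v_\ell=0$. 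Compared with the paper's own $n=3$ argument --- the involution $a\mapsto(d-1)a$ pairing admissible reduced multiples with non-admissible ones, which genuinely breaks for $n\ge4$ --- your class-by-class count both explains the failure (a class with $v_\ell\ge2$ contains no reduced representative, but does contain $\binom{v_\ell+n-2}{n-1}\ge v_\ell$ positive lifts) and yields the sharper exact count $\sum_\ell\binom{v_\ell+n-2}{n-1}$, which collapses to $g$ precisely when every $v_\ell\le1$, e.g.\ when $n=3$.
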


Recall Corollary \ref{cor: admissible metric}. In other words, we allow $a \cdot d_i \pmod d$ to be greater than $d - 1.$ Furthermore, the cone metrics that arise this way yield a basis of holomorphic 1-forms. In Appendix~\ref{genera}, we check that the conjecture is true for all coverings of lower genera $(g \leq 5).$

\subsection*{Octa-4, flat structures and algebraic equations}
\label{subsec: cone}
We will mainly discuss two topics in this section. First, we will find the flat structures on the underlying surface of the Octa-4 that are compatible with its conformal type. In Example \ref{exmp: 125_}, we found cone metrics that arose from multipliers and found an explicit basis of holomorphic 1-forms. We will show that the geometric representations of the 1-forms are translational structures. Then, with the given basis of holomorphic 1-forms we will find an algebraic equation that describes the Octa-4 surface and show that it is projectively equivalent to Fermat's quartic. \\

First we look at $\omega_1$ using the notation from Example~\ref{exmp: 125_}. We map a hyperbolic $(\frac{\pi}{4}, \frac{\pi}{4}, \frac{\pi}{4})$-triangle to a Euclidean $(\frac{\pi}{8}, \frac{2 \pi}{8}, \frac{5 \pi}{8})$-triangle. Then, by Schwarz reflection principle we obtain a conformally equivalent flat metric (Figure \ref{flat}). The hyperbolic geodesics are mapped to straight lines in the Euclidean plane. In other words, the identification of the edges are preserved in the flat structure by translations. We put a translational structure $d z$ at the interior and along the edges of the flat polygon. The cone angle is $2 \pi$ at $\widetilde{p_1}$ and $\widetilde{p_2}_i,$ hence they are generic points. However, the cone angle at $\widetilde{p_3}$ is $\frac{5 \pi}{4} \times 8 = 2 \pi (4 + 1).$ This corresponds to a holomorphic 1-form that has a zero of order four at $\widetilde{p_3}.$

\begin{figure}[htbp] 
   \centering
   \includegraphics[width=4in]{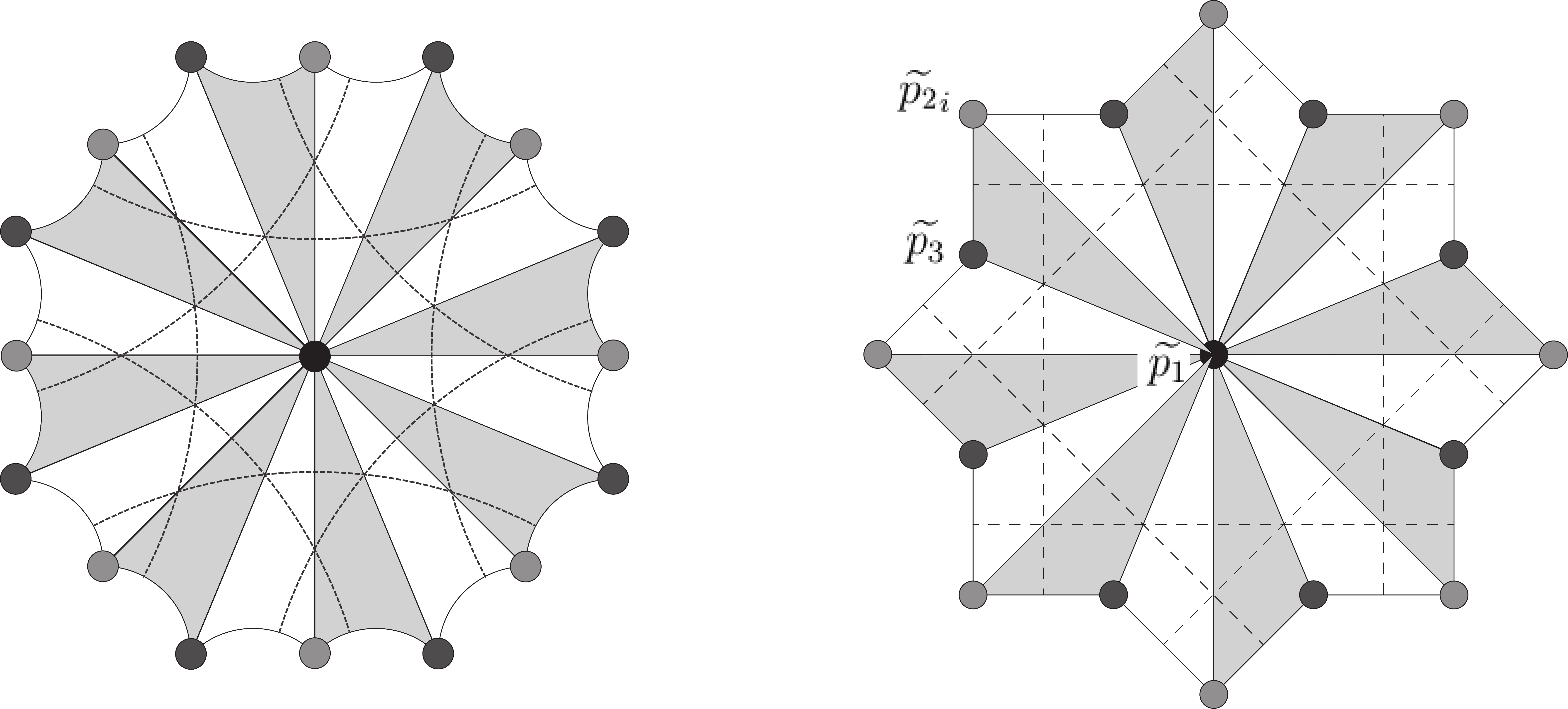} 
   \caption{A translational structure on $(X, \omega_1)$}
   \label{flat}
\end{figure}

Next we look at the translational structure induced from $\omega_2$ and map the hyperbolic triangles to Euclidean $(\frac{2 \pi}{8}, \frac{4 \pi}{8}, \frac{2 \pi}{8})$-triangles, where the edges are again identified by translations (Figure \ref{flat2}). Then around each $\widetilde{p_1}, \widetilde{p_2}_1, \widetilde{p_2}_2,$ and $\widetilde{p_3},$ we get a cone angle of $2 \pi (1 + 1).$ This corresponds to $\omega_2$ that has simple zeros at $\widetilde{p_1}, \widetilde{p_2}_1, \widetilde{p_2}_2,$ and $\widetilde{p_3}.$ 

\begin{figure}[htbp] 
   \centering
   \includegraphics[width=4in]{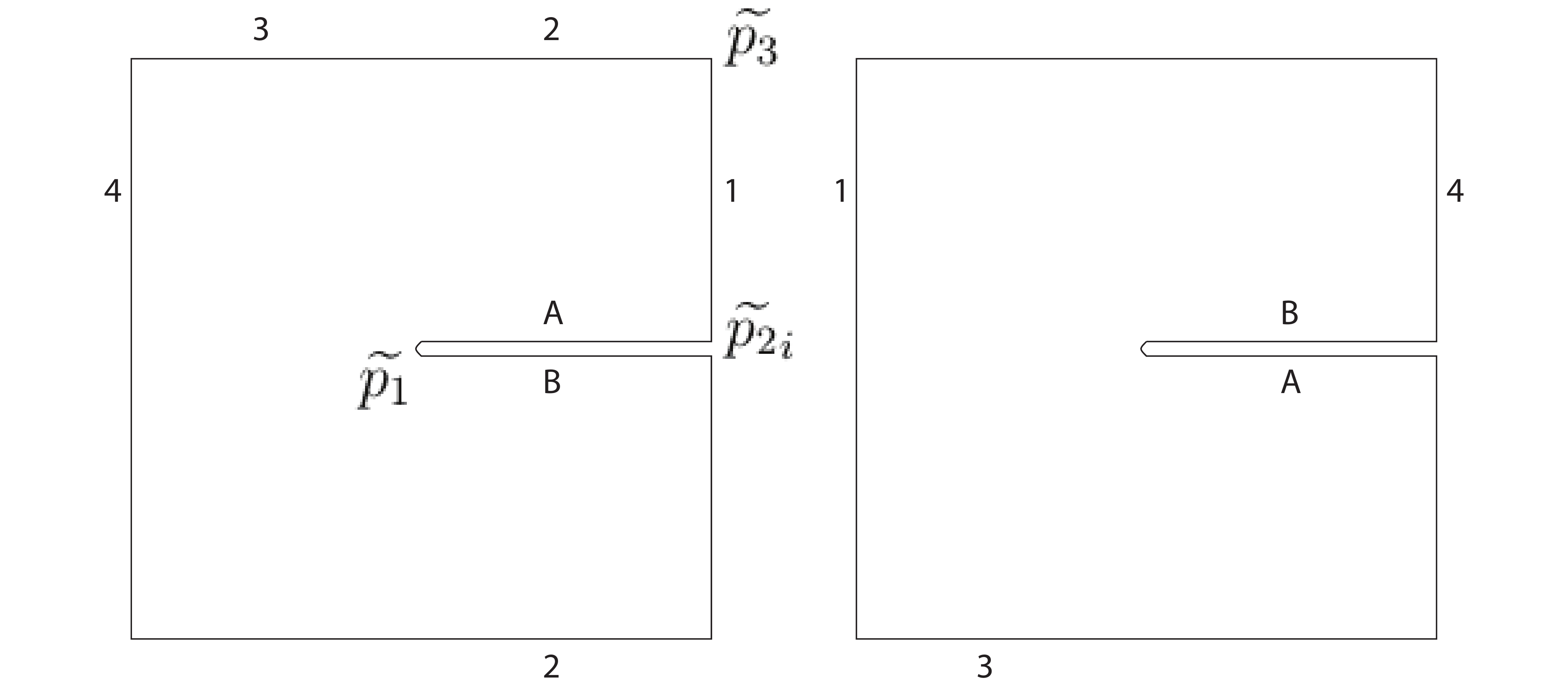} 
   \caption{A translational structure on $(X, \omega_2)$}
   \label{flat2}
\end{figure}

Similarly, we can map hyperbolic triangles to $\left(\frac{5 \pi}{8}, \frac{2 \pi}{8}, \frac{\pi}{8}\right)$-triangles for $\omega_3$ and show that the cone angle is $2 \pi (4 + 1)$ at $\widetilde{p_1};$ $2 \pi$ at $\widetilde{p_2}_1,$ $\widetilde{p_2}_2,$ and $\widetilde{p_3}.$ This yields $\omega_3,$ therefore we get $\{\omega_i\}$ as a basis of holomorphic 1-forms.\\

As every compact Riemann surface is a complex algebraic curve, Proposition 2.1 of Chapter 7 from \cite{miranda1995algebraic} states that if $X$ is a non-hyperelliptic algebraic curve of genus $\geq 3,$ then it can be embedded into $\mathbb{P}^{g - 1}$ as a smooth projective curve of degree $2 g - 2.$ As the Octa-4 is a genus three curve, we will find a quartic curve to show that is not hyperelliptic. \\
 
We will find an algebraic description of the Octa-4 surface that is equivalent to its conformal type. With $\omega_i$ as defined, we define holomorphic functions $f, g: X \rightarrow \widetilde{\mathbb{C}}$ so that $$(f) := \left(\frac{\omega_1}{\omega_2}\right) = -\widetilde{p_1} - \widetilde{p_2}_1 - \widetilde{p_2}_2 + 3 \widetilde{p_3}, \qquad (g) := \left(\frac{\omega_3}{\omega_2}\right) = 3 \widetilde{p_1} - \widetilde{p_2}_1 - \widetilde{p_2}_2 - \widetilde{p_3}.$$ Then, with $(f^3 g) = -4 \widetilde{p_2}_1 - 4 \widetilde{p_2}_2+ 8 \widetilde{p_3}$ and $(f g^3) = 8 \widetilde{p_1} - 4 \widetilde{p_2}_1 - 4 \widetilde{p_2}_2$ and after proper scaling of the functions we get \begin{equation} \label{eqn: Fermat} f^3 g -1 = f g^3 \Rightarrow \omega_1^3 \omega_3 - \omega_1 \omega_3^3 = \omega_2^4. \end{equation}

In \cite{lee2017triply}, it is shown that the quartic equation in (\ref{eqn: Fermat}) is projectively equivalent to Fermat's quartic.

\begin{thm*} [\cite{lee2017triply}]
The conformal structure on the underlying surface of the Octa-4 is conformally equivalent to Fermat's quartic.
\end{thm*}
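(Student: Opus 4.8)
The plan is to exhibit both the Octa-4 quartic (\ref{eqn: Fermat}) and Fermat's quartic as members of the one-parameter family of plane quartics of the shape $\ell_1\ell_2\ell_3\ell_4 = m^4$, where $\ell_1,\dots,\ell_4$ are linear forms in two of the variables and $m$ is the third; such a curve is the cyclic fourfold cover of $\CP^1$ branched over the four points $\{\ell_i = 0\}$, and its projective-equivalence class is pinned down by the cross-ratio of those four branch points. First I would factor the left side of (\ref{eqn: Fermat}): since $\omega_1^3\omega_3 - \omega_1\omega_3^3 = \omega_1\omega_3(\omega_1-\omega_3)(\omega_1+\omega_3)$, the equation reads $\omega_1\omega_3(\omega_1-\omega_3)(\omega_1+\omega_3) = \omega_2^4$, displaying $X$ as a fourfold cover of the $[\omega_1:\omega_3]$-line branched over the points $\omega_1/\omega_3 \in \{0,\infty,1,-1\}$. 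These four points are in harmonic position, with cross-ratio $-1$.

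Next I would put Fermat's quartic into the same normal form. Isolating one coordinate in $u^4+v^4+w^4=0$ gives $w^4 = -(u^4+v^4) = -\prod_{k}(u-\zeta_k v)$ with $\zeta_k^4=-1$, so Fermat is the fourfold cover branched over the four fourth roots of $-1$; these sit at the vertices of a square inscribed in the unit circle, and their cross-ratio is again $-1$. Since the two branch divisors are both harmonic, a M\"obius transformation of $\CP^1$ carries one ordered quadruple to the other. Concretely, I would verify that the linear substitution $u=\omega_3+i\,\omega_1$, $v=\omega_3-i\,\omega_1$ sends the four Octa-4 lines to the four Fermat lines and satisfies $\omega_1\omega_3(\omega_1-\omega_3)(\omega_1+\omega_3) = c\,(u^4-v^4)$ for an explicit nonzero constant $c$ (one finds $c=i/8$). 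Rescaling the remaining coordinate by setting $w=\lambda\omega_2$ with $\lambda^4=1/c$ then upgrades this to a linear isomorphism $[\omega_1:\omega_2:\omega_3]\mapsto[u:w:v]$ of homogeneous coordinates, i.e. a projective automorphism of $\CP^2$ carrying (\ref{eqn: Fermat}) to $u^4-v^4=w^4$; a final diagonal scaling $v\mapsto \zeta_8 v$, $w\mapsto \zeta_8 w$ brings this to the standard form $u^4+v^4+w^4=0$.

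The routine inputs are that in each case the four linear forms are pairwise distinct, which is what guarantees that both quartics are smooth plane curves (any singular point of $\prod\ell_i = m^4$ would force $m=0$ together with two coincident lines among the $\ell_i$, while the common vertex $\ell_1=\cdots=\ell_4=0$ lies off the curve), and the elementary computation of the constant $c$. The conceptual heart of the argument — and the step I expect to require the most care — is the passage from the mere M\"obius-equivalence of the two branch divisors to an honest projective equivalence of the two plane quartics: this is exactly what the rescaling of the third coordinate achieves, and it works precisely because both defining polynomials are already in the split form (product of linear forms) $=$ (fourth power). Once that bookkeeping is in place, the equality of cross-ratios delivers the desired projective transformation and hence the theorem.
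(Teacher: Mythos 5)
Your proposal is correct, and the computation checks out: with $u=\omega_3+i\omega_1$, $v=\omega_3-i\omega_1$ one gets $u-v=2i\omega_1$, $u+v=2\omega_3$, $u-iv=(1-i)(\omega_3-\omega_1)$, $u+iv=(1+i)(\omega_3+\omega_1)$, so $u^4-v^4=-8i\,\omega_1\omega_3(\omega_1-\omega_3)(\omega_1+\omega_3)$, confirming your constant $c=i/8$, and the subsequent rescalings $w=\lambda\omega_2$, $v\mapsto\zeta_8 v$, $w\mapsto\zeta_8 w$ are legitimate diagonal projective changes of coordinates. Note, however, that this is not a point where you can be compared against a full argument in the paper: the paper's own treatment consists of deriving equation (\ref{eqn: Fermat}) from the divisors of $f=\omega_1/\omega_2$ and $g=\omega_3/\omega_2$ (which you take as given), and then \emph{citing} \cite{lee2017triply} for the statement that this quartic is projectively equivalent to Fermat's quartic. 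Your argument supplies precisely that outsourced step, and it does so in a conceptually satisfying way: the factorization $\omega_1\omega_3(\omega_1-\omega_3)(\omega_1+\omega_3)=\omega_2^4$ exhibits the curve as the fourfold cyclic cover of $\CP^1$ branched over the harmonic quadruple $\{0,\infty,1,-1\}$, Fermat's quartic is the analogous cover over the fourth roots of $-1$ (also harmonic), and the equality of cross-ratios is then upgraded to a projective equivalence by the explicit linear substitution. You also correctly isolate the one genuinely delicate point --- that M\"obius equivalence of branch divisors does not automatically give projective equivalence of the plane curves --- and resolve it by the rescaling of the third coordinate, which works exactly because both polynomials are in the split form (product of linear forms) $=$ (fourth power). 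Your smoothness check is a useful sanity verification, though strictly speaking it is not needed for the equivalence itself, since the curve in question is the canonical image of the non-hyperelliptic genus-three surface $X$ and is therefore automatically a smooth quartic.
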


\subsection{Wronski metric and Weierstrass points}
\label{sec: wronski}

In this section, we look for Weierstrass points on compact Riemann surfaces. As they carry information about the automorphism group of the surface, it allows one to distinguish one Riemann surface from another. In this section, we will discuss how one can locate all Weierstrass points on a given surface by defining the Wronski metric. Lastly, we will locate all Weierstrass points on the underlying surface of the Octa-4 surface and show that they correspond to the vertices on its triply periodic polyhedral realization in $\R^3.$\\

The following definitions, theorems, and propositions can be found in \cite{farkas1992riemann}.

\begin{thm*}[Weierstrass ``gap'' theorem] \label{thm: gap} Let $X$ be a Riemann surface of positive genus and let $p \in X$ be an arbitrary point. Then there are precisely $g$ integers $$1 = n_1 < n_2 < \cdots < n_g < 2 g$$ such that there does not exist a meromorphic function $f$ on $X$ that is holomorphic on $X \setminus \{p\}$ with a pole of order $n_i$ at $p.$
\end{thm*}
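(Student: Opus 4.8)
The plan is to study the nested spaces $L(np) = \{ f : (f) \geq -n\,p \}$ of meromorphic functions that are holomorphic on $X \setminus \{p\}$ with a pole of order at most $n$ at $p,$ and to read off the gaps from how their dimensions $\ell(np) := \dim L(np)$ grow with $n.$ The reformulation I would use is that an integer $n \geq 1$ is a \emph{non-gap} (some $f$ has a pole of order exactly $n$ at $p$) precisely when $\ell(np) = \ell((n-1)p) + 1,$ and a \emph{gap} when $\ell(np) = \ell((n-1)p).$ Thus the whole problem reduces to the combinatorics of the nondecreasing sequence $\ell(0\cdot p), \ell(1\cdot p), \ell(2\cdot p), \dots,$ which starts at $\ell(0 \cdot p) = 1$ since only constants are allowed.

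First I would establish two structural facts. (i) \emph{Each step raises the dimension by at most one:} since $L((n-1)p) \subseteq L(np),$ any two elements of $L(np)$ with a pole of exact order $n$ can be scaled and subtracted to cancel their leading Laurent coefficients at $p,$ landing back in $L((n-1)p);$ hence $\ell(np) - \ell((n-1)p) \in \{0,1\}.$ (ii) \emph{The large-$n$ values are forced by Riemann-Roch:} writing $K$ for the canonical divisor with $\deg K = 2g - 2,$ Riemann-Roch gives $\ell(np) = n - g + 1 + \ell(K - np),$ and $\deg(K - np) = 2g - 2 - n < 0$ once $n \geq 2g-1,$ so $\ell(K - np) = 0$ and $\ell(np) = n - g + 1$ there. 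In particular $\ell((2g-1)p) = g,$ and for $n \geq 2g$ each step adds exactly one, so every such $n$ is a non-gap and all gaps lie in $\{1, \dots, 2g-1\},$ giving $n_g < 2g.$

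The count now follows by telescoping: the number of non-gaps in $\{1, \dots, 2g-1\}$ equals $\ell((2g-1)p) - \ell(0\cdot p) = g - 1,$ so the number of gaps in that range is $(2g-1) - (g-1) = g,$ producing exactly the asserted integers $1 = n_1 < \dots < n_g < 2g.$ Finally $n_1 = 1$ always holds, because a non-gap at $1$ would furnish a meromorphic function with a single simple pole, i.e. a degree-one map $X \to \CP^1,$ which forces $g = 0$ and contradicts the positivity of the genus. I expect the only genuine subtlety to be fact (i), the Laurent-tail cancellation that caps each increment at one, together with pinning down $\ell((2g-1)p) = g$ so that the global count lands on exactly $g$ gaps; the remainder is bookkeeping with Riemann-Roch.
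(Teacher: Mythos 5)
Your proof is correct. The paper does not prove this statement at all---it quotes it as a classical theorem from \cite{farkas1992riemann}---and your argument (reading gaps off the dimension sequence $\ell(np)$, the Laurent-tail cancellation capping each increment at one, Riemann--Roch forcing $\ell(np)=n-g+1$ for $n\geq 2g-1$, and the telescoping count giving exactly $g$ gaps with $n_1=1$) is precisely the standard proof found in that reference; the only implicit hypothesis worth flagging is compactness of $X$, which your appeals to Riemann--Roch and to $\ell(0\cdot p)=1$ both require and which the paper's setting supplies.
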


\begin{defn*} Given the gap sequence $n_1, \ldots , n_g$ at $p \in X,$ the \textit{weight} of a point $p$ is defined by $\textrm{wt}_p := \sum\limits_{i = 1}^g (n_i - i).$ If $\textrm{wt}_p > 0,$ then we say that $p$ is a \textit{Weierstrass point.} 
\end{defn*}

\begin{prop*} Let $X$ be a compact Riemann surface of genus $\geq 2.$ Let $\textrm{wt}_p$ be the weight of $p \in X.$ Then $\sum\limits_{p \in X} \textrm{wt}_p = (g - 1) g (g + 1).$
\end{prop*}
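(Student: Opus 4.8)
The plan is to use the Wronskian of a basis of holomorphic 1-forms and to reinterpret the total weight as the degree of a line bundle, so that the Gauss--Bonnet/degree count does the combinatorial bookkeeping automatically.

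First I would fix a basis $\varphi_1, \ldots, \varphi_g$ of the $g$-dimensional space of holomorphic 1-forms on $X$. In a local coordinate $z$, write $\varphi_i = f_i(z)\, dz$ and form the Wronskian
$$W(z) = \det\left( f_i^{(j-1)}(z) \right)_{1 \le i,\, j \le g}.$$
A direct computation with the chain rule shows that under a holomorphic change of coordinate $z = z(w)$ the Wronskian transforms by
$$W_w(w) = W_z(z(w)) \cdot \left( \frac{dz}{dw} \right)^{g(g+1)/2},$$
which is exactly the transformation law for a local representative of a section of $K^{\otimes g(g+1)/2}$, where $K$ is the canonical bundle. Hence $W(z)\,(dz)^{\otimes g(g+1)/2}$ is a globally well-defined holomorphic section of $K^{\otimes g(g+1)/2}$, and it is not identically zero because the $\varphi_i$ are linearly independent.

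The main step is to show that the order of vanishing of $W$ at a point $p$ equals $\textrm{wt}_p$. By Riemann--Roch duality, the gap sequence $1 = n_1 < \cdots < n_g < 2g$ for functions at $p$ is dual to the orders of vanishing attained by holomorphic differentials at $p$: an integer $n$ is a gap if and only if some holomorphic 1-form vanishes to order exactly $n - 1$ at $p$. Setting $a_i = n_i - 1$, I would choose an adapted basis $\psi_1, \ldots, \psi_g$ whose vanishing orders at $p$ are precisely $a_1 < \cdots < a_g$; since passing to this basis multiplies $W$ by a nonzero constant, the vanishing order of $W$ is unchanged. Writing $\psi_i = (c_i z^{a_i} + \cdots)\, dz$ in a coordinate centered at $p$ and extracting the leading term of the determinant (a generalized Vandermonde that is nonzero because the $a_i$ are distinct), one finds the order of vanishing to be
$$\sum_{i=1}^g \bigl( a_i - (i-1) \bigr) = \sum_{i=1}^g (n_i - i) = \textrm{wt}_p.$$

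Finally, the total number of zeros of a nonzero holomorphic section of a line bundle equals the degree of that bundle, so summing the local vanishing orders gives
$$\sum_{p \in X} \textrm{wt}_p = \deg\!\left( K^{\otimes g(g+1)/2} \right) = \frac{g(g+1)}{2}\,\deg K = \frac{g(g+1)}{2}\,(2g-2) = (g-1)\,g\,(g+1),$$
using $\deg K = 2g-2$. The hard part is the middle step: rigorously matching the analytically defined vanishing order of the Wronskian at $p$ with the combinatorially defined weight $\textrm{wt}_p$, which requires both the Riemann--Roch translation between the function-theoretic gaps and the vanishing orders of the canonical series, and the adapted-basis computation of the Wronskian's leading term. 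The transformation law and the degree count are routine by comparison.
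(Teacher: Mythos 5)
Your proposal is correct, and it is essentially the classical argument: the paper itself gives no proof of this proposition --- it is quoted verbatim from Farkas--Kra \cite{farkas1992riemann}, which is where the Wronskian proof you outline appears. Your route also matches the machinery the paper develops immediately afterward in the same subsection: the paper's remark that ``a zero of the Wronskian is a Weierstrass point'' whose order ``equals its weight,'' and its Proposition on the chain rule for the Wronskian (showing $\mathcal{W}$ is a $\frac{g(g+1)}{2}$-differential), are exactly your two key steps, from which the degree count $\frac{g(g+1)}{2}(2g-2) = (g-1)g(g+1)$ follows. The one point worth making fully explicit in your middle step is that no cancellation can spoil the leading term: expanding each adapted basis element $\psi_i$ as a power series and using multilinearity, the Wronskian of monomials $z^{b_1},\ldots,z^{b_g}$ vanishes identically when two exponents coincide and otherwise has order $\sum b_i - \binom{g}{2}$, so the unique minimal-order contribution comes from $(b_1,\ldots,b_g) = (a_1,\ldots,a_g)$ with the nonvanishing generalized Vandermonde coefficient you identify; with that observation the identification of the vanishing order with $\mathrm{wt}_p$ is rigorous.
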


\begin{prop*} Let $W(X)$ be the finite set of Weierstrass points on $X.$ If $\phi \in \textrm{Aut}(X),$ then $\phi (W(X)) = W(X).$ In fact, the gap sequences at $p \in X$ and $\phi(p)$ are the same.
\end{prop*}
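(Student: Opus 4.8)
The plan is to exploit the fact that an automorphism $\phi \in \textrm{Aut}(X)$ is a biholomorphism, so that pullback by $\phi$ induces a field automorphism of the field of meromorphic functions on $X$ which preserves the order of a pole at corresponding points. The key reduction is to establish the second (stronger) assertion first: once I know that the gap sequence at $p$ coincides with the gap sequence at $\phi(p)$ for every $p$, the weight $\textrm{wt}_p = \sum_{i=1}^g (n_i - i)$, being determined entirely by the gap sequence, satisfies $\textrm{wt}_p = \textrm{wt}_{\phi(p)}$. Hence $p$ is a Weierstrass point if and only if $\phi(p)$ is one, and $\phi(W(X)) = W(X)$ follows at once, with finiteness of $W(X)$ already supplied by the preceding proposition.

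To prove that the gap sequences agree, I would set $p' = \phi(p)$ and argue through the pullback $f \mapsto f \circ \phi$. Fix an integer $n$, and recall that $n$ is a \emph{non-gap} at a point when there exists a meromorphic function on $X$, holomorphic off that point, with a pole of order exactly $n$ there. Suppose such an $f$ exists at $p'$. Then $f \circ \phi$ is meromorphic on $X$, holomorphic on $X \setminus \{p\}$, and, because $\phi$ is a local biholomorphism near $p$, has a pole of order exactly $n$ at $p$. Thus $n$ is a non-gap at $p'$ if and only if it is a non-gap at $p$; applying the same reasoning to $\phi^{-1}$ yields the reverse implication. Consequently the set of non-gaps, and therefore its complementary gap sequence $1 = n_1 < \cdots < n_g < 2g$ furnished by the Weierstrass gap theorem, is identical at $p$ and at $\phi(p)$.

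The only point requiring genuine care — the closest thing to an obstacle — is verifying that the order of the pole is preserved \emph{exactly}, not merely up to an inequality; this is precisely where biholomorphy is essential rather than mere holomorphy. A local coordinate check settles it: if $w = \psi(z)$ with $\psi'(p) \neq 0$ carries $p$ to $p'$, and $f$ has leading Laurent term $c\, w^{-n}$ at $p'$, then $f \circ \phi$ has leading term $c\,(\psi'(p))^{-n} z^{-n}$ at $p$, so the order is unchanged. Everything else is formal, and no input beyond the Weierstrass gap theorem and the preceding two propositions is needed.
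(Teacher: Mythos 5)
Your proof is correct. The paper itself gives no proof of this proposition---it is one of the results quoted directly from Farkas--Kra \cite{farkas1992riemann}---and your argument is precisely the standard one: pull back a function with a pole of exact order $n$ at $\phi(p)$ along $\phi$ to exhibit $n$ as a non-gap at $p$, use $\phi^{-1}$ for the converse, observe that the weight $\textrm{wt}_p = \sum_{i=1}^g (n_i - i)$ is determined by the gap sequence, and note (as your local-coordinate check correctly isolates) that biholomorphy is what guarantees pole orders are preserved exactly rather than merely bounded.
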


In general, there is no straightforward way of locating all Weierstrass points even on a highly symmetric surface. For example, on Klein's quartic (Example \ref{exmp: 124_}) the gap sequence at each $\widetilde{p_i}$ is $1, 2, 4,$ hence $\textrm{wt}_{\widetilde{p_i}} = 1,$ therefore the surface is not hyperelliptic. Since the genus of Klein's quartic is three, all weights should sum up to 24. We use $\widehat\phi \in \textrm{Aut}(X)$ to locate the other Weierstrass points. For instance, there are seven copies of $\widetilde{q} \in X$ for $q \notin \{p_i\},$ and if $\widetilde{q}$ is a Weierstrass point, then so is $\widehat\phi(q).$ However, it is not apparent where the Weierstrass points are located nor can we conclude the distribution of weights. 

The following proposition from Chapter 3, Section 5 in \cite{farkas1992riemann} tells us that the Wronskian of a basis of holomorphic 1-forms induces a metric on the surface. With this metric, we can find all Weierstrass points on a surface without direct information of its automorphism group. 

\begin{defn*} Given a basis of holomorphic functions $\{f_1, \ldots , f_g\}$ on a Riemann surface $X$ of genus $g$, the Wronskian defined by $$\mathcal{W}(z) := \textrm{det} \left( \frac{d^j f_k(z)}{d z^j} \right)_{j = 0, \ldots , g - 1, \, k = 1, \ldots , g}$$ is a non-trivial holomorphic function on $X$ that induces a metric which we call the \textit{Wronski metric.} 
\end{defn*}

\begin{rem} A change of basis results in a non-zero constant multiple of the Wronskian, hence the zeros are independent of the change of basis (from Chapter 3, Section 5 in \cite{farkas1992riemann}). By induction on $g,$ one can show that a zero of the Wronskian is a Weierstrass point on $X.$ The order of a zero at a point equals its weight which encodes the information about the cone angle. Hence, the Wronski metric is a cone metric. \end{rem}

Let $X$ be a $d$-fold cyclically branched cover over an $n$-punctured sphere $Y$ defined by branching indices $(d_1, \ldots , d_n).$ To compute the Wronski metric on $X,$ we need to find an appropriate coordinate chart $w$ on $X$ which is induced from the pullback of a coordinate chart on $Y.$ Let $z$ be a local coordinate chart on $Y$ such that $z(p_i) = 0.$ Then its pullback on $X$ has a zero of order $\frac{d}{\gcd(d, d_i)}$ at $\widetilde{p_i},$ so we write $w$ as $w(z) = z^{\frac{d}{\gcd(d, d_i)}}$ on $X$ near $\widetilde{p_i}.$ On the other hand, let $\omega_k = f_k(z) d z$ be a holomorphic 1-form on $X$ that is induced from an admissible cone metric defined by cone angles $\frac{2 \pi}{d} (a_1^k, \ldots , a_n^k) \footnote{$k$ is a superscript to denote the cone angles that yield a holomorphic 1-form $\omega_k.$},$ then $\omega_k$ has a zero of order $\frac{a_i^k}{\gcd(d, d_i)} - 1$ at $\widetilde{p_i}.$ Since $d z$ has a zero of order $\frac{d}{\gcd(d, d_i)} - 1,$ $f$ has a zero of order $\frac{a_i^k - d}{\gcd(d, d_i)}$ at $\widetilde{p_i}.$ Furthermore, we can write $f_k(z) = g_k(z) h(z)$ for $k = 1, \ldots , g$ where $g_k(z) = z^{\frac{a_i^k}{\gcd(d, d_i)}}$ and $h(z) = z^{-\frac{d}{\gcd(d, d_i)}}.$ Then, $$\begin{array}{rcl}\mathcal{W}(z) & := & \textrm{det} \left( \frac{d^j f_k(z)}{d z^j} \right)_{j = 0, \ldots , g - 1 \, k = 1, \ldots , g}\\
& = & \textrm{det} \left( f_k^{(j)} \right)\\
& = & h^g \cdot \underbrace{\textrm{det} \left( g_k^{(j)}\right)}_\text{$\mathcal{W}_1(z)$}\end{array}.$$ In other words, the Weierstrass points that are not preimages of any $p_i$ arise as zeros of $\mathcal{W}_1(z).$ However, the weight at $\widetilde{p_i}$ is computed as follows.

\begin{prop}[Chain rule for Wronskian] \label{prop: wronski} Given a basis of holomorphic 1-forms on a compact Riemann surface of genus $g,$ the Wronskian of the basis is a non-trivial holomorphic $\frac{g (g + 1)}{2}$-differential on $X$ and $$\mathcal{W}(\phi(w)) = [ h(\phi(w)) ]^g \cdot [ \phi'(w) ]^{\frac{(g - 1) g}{2}} \cdot \mathcal{W}_1(\phi(w))$$ where the derivatives are taken with respect to $w.$
\end{prop}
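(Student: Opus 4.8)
The plan is to deduce the identity from two scaling laws for Wronskian determinants of \emph{functions}, which I then combine with the factorization $\mathcal{W} = h^g\,\mathcal{W}_1$ recorded just above. Write $\omega_k = f_k(z)\,dz$ and let $z = \phi(w)$ be the change to the good local coordinate $w$ at the ramification point. The first law is the common-factor rule: for any $h$ and any $g_1, \dots, g_g$,
$$\det\big((h\,g_k)^{(j)}\big)_{j,k} \;=\; h^{\,g}\,\det\big(g_k^{(j)}\big)_{j,k},$$
which is proved by Leibniz-expanding each derivative and removing the strictly lower-order terms by column operations; this is exactly the manipulation that yielded $\mathcal{W} = h^g\mathcal{W}_1$. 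The second, central law describes how a Wronskian behaves under the substitution $z = \phi(w)$, with all derivatives now taken in $w$.

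For the substitution law I would invoke Fa\`a di Bruno's formula: for each $j$ there are universal polynomials $B_{j,l}$ in the derivatives $\phi', \phi'', \dots$ with
$$\frac{d^{\,j}}{dw^{\,j}}\big(f_k \circ \phi\big) \;=\; \sum_{l=0}^{j} B_{j,l}\,\big(f_k^{(l)} \circ \phi\big), \qquad B_{j,l} = 0 \ \ (l > j), \quad B_{j,j} = (\phi')^{\,j}.$$
Hence the matrix $\big(\tfrac{d^{j}}{dw^{j}}(f_k\circ\phi)\big)_{j,k}$ factors as $B\cdot F$, where $B = (B_{j,l})_{0 \le j,l \le g-1}$ is lower triangular with diagonal $(\phi')^{0}, \dots, (\phi')^{g-1}$ and $F = \big(f_k^{(l)}\circ\phi\big)_{l,k}$ is the pulled-back Wronskian matrix. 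Taking determinants and using $\det B = (\phi')^{\,0 + 1 + \cdots + (g-1)} = (\phi')^{\frac{g(g-1)}{2}}$ gives
$$\det\Big(\tfrac{d^{j}}{dw^{j}}(f_k\circ\phi)\Big) \;=\; (\phi')^{\frac{g(g-1)}{2}}\;\mathcal{W}(\phi(w)),$$
the $z$-Wronskian composed with $\phi$. Verifying the two triangularity claims $B_{j,l} = 0$ for $l > j$ and $B_{j,j} = (\phi')^{j}$ is the one genuinely computational point, and I expect it to be the main obstacle; it is cleanest to prove by induction on $j$, differentiating the entries of the previous row and reading off that the new diagonal entry acquires exactly one further factor of $\phi'$, rather than by writing Fa\`a di Bruno's coefficients out in closed form.

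It then remains to assemble the pieces. Substituting the factorization $\mathcal{W} = h^g\mathcal{W}_1$ into the displayed substitution law immediately yields the stated identity, with $[h(\phi(w))]^{g}$ coming from the common-factor rule and $[\phi'(w)]^{\frac{g(g-1)}{2}}$ from $\det B$. The differential weight is then accounted for separately: rewriting each holomorphic $1$-form in the $w$-coordinate as $\omega_k = (f_k\circ\phi)\,\phi'\,dw$ contributes one extra Jacobian factor per form, i.e.\ a further $(\phi')^{g}$ by the common-factor rule, so the total power of $\phi'$ carried by the honest $1$-form Wronskian is $\frac{g(g-1)}{2} + g = \frac{g(g+1)}{2}$; this is precisely the assertion that $\mathcal{W}$ transforms as a $\frac{g(g+1)}{2}$-differential. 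Finally, non-triviality is the statement $\mathcal{W} \not\equiv 0$: since $\{\omega_k\}$ is a basis the functions $f_k$ are linearly independent over $\C$, and an identically vanishing Wronskian would force a nontrivial $\C$-linear relation among them, a contradiction; holomorphy of the resulting $\frac{g(g+1)}{2}$-differential follows because the transformation law re-expresses it in the good coordinate $w$, where the apparent poles of the $f_k$ at the branch points are absorbed by the Jacobian and the factor $h$.
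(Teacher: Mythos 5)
Your proof is correct and takes essentially the same route as the paper: both arguments rest on the lower-triangular (Fa\`a di Bruno) factorization of the chain-rule matrix, whose determinant contributes $[\phi'(w)]^{\frac{(g-1)g}{2}}$, combined with the common-factor rule supplying $[h(\phi(w))]^g$; you simply apply the two steps in the opposite order (chain rule on the $f_k$ first, then factor out $h^g$, whereas the paper factors out $h^g$ first and applies the chain rule to the $g_k$). Your added verifications of the $\frac{g(g+1)}{2}$-differential weight and of non-triviality address parts of the statement that the paper's proof leaves implicit.
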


\begin{proof}
Let $z = \phi(w)$ then by definition $\mathcal{W}(z) = \mathcal{W}(\phi(w)) = [ h(\phi(w)) ]^g \cdot \textrm{det} \left( g_k^{(j)}(\phi(w))\right)$ where $$\begin{array}{rcl}\textrm{det} \left( g_k^{(j)}(\phi(w))\right) & = & \textrm{det} \begin{pmatrix}
g_k(\phi(w))\\
g_k'(\phi(w)) \cdot \phi'(w)\\
g_k''(\phi(w)) \cdot \left( \phi'(w) \right)^2 + g_k'(\phi(w)) \cdot \phi''(w)\\
\vdots \\
g_k^{(g - 1)}(\phi(w)) \cdot (\phi'(w))^{g - 1} + \cdots + g_k'(\phi(w)) \cdot \phi^{(g - 1)}(w)\end{pmatrix}\\
&&\\
& = & [ \phi'(w) ]^{\frac{(g - 1) g}{2}} \textrm{det} \begin{pmatrix}
g_k(\phi(w))\\
g_k'(\phi(w))\\
g_k''(\phi(w))\\
\vdots\\
g_k^{(g - 1)}(\phi(w))\end{pmatrix},\end{array}$$ hence $$\mathcal{W}(\phi(w)) = [ h(\phi(w)) ]^g \cdot [ \phi'(w) ]^{\frac{(g - 1) g}{2}} \cdot \mathcal{W}_1(\phi(w)).$$ 
\end{proof}

\begin{cor} \label{cor: wronski} $\mathcal{W}(w)$ has a zero of order $$\textrm{wt}_i := \frac{d}{\gcd(d, d_i)} \cdot \left( \frac{(g - 1) g}{2} + b_i \right) - \frac{g (g + 1)}{2}$$ at $\widetilde{p_i}$ where $b_i$ is defined as the order of $\mathcal{W}_1(z)$ at $\widetilde{p_i}.$
\end{cor}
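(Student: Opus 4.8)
The plan is to obtain $\textrm{wt}_i$ as the order of vanishing of the Wronski metric at $\widetilde{p_i}$. By the remark preceding Proposition~\ref{prop: wronski}, the Wronskian of a basis of holomorphic $1$-forms is a holomorphic $\frac{g(g+1)}{2}$-differential whose order of vanishing at a point equals the Weierstrass weight there, so it suffices to compute that order in a chart adapted to the covering. Near $\widetilde{p_i}$ I use the local coordinate $w$ on $X$ and the coordinate $z$ on $Y$ with $z(p_i) = 0$, related by the local model $z = \phi(w) = w^m$ of the branched covering, where $m := \frac{d}{\gcd(d, d_i)}$ is the local degree of the covering map at $\widetilde{p_i}$ (Proposition~\ref{prop: g(X)}).

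I would then apply Proposition~\ref{prop: wronski} with this $\phi$ and record the order in $w$ of each factor. Since $\phi(w) = w^m$ we have $\phi'(w) = m w^{m - 1}$, hence $\textrm{ord}_w(\phi') = m - 1$; the common factor gives $h(\phi(w)) = w^{-m}$, so $[h(\phi(w))]^g$ contributes $-gm$; and, $\mathcal{W}_1$ being a function on the base with $\textrm{ord}_z(\mathcal{W}_1) = b_i$, its pullback $\mathcal{W}_1(\phi(w)) = \mathcal{W}_1(w^m)$ contributes $m b_i$. The one point that needs care is that passing from the Wronskian of the coefficient functions $f_k$ to the coefficient of an honest $\frac{g(g+1)}{2}$-differential in the $w$-chart introduces an additional factor $[\phi'(w)]^g$, so that the total power of $\phi'$ is $[\phi'(w)]^{(g-1)g/2 + g} = [\phi'(w)]^{g(g+1)/2}$. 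Summing the four contributions gives $\textrm{wt}_i = -gm + \frac{g(g+1)}{2}(m - 1) + m b_i$, and rewriting $\frac{g(g+1)}{2} - g = \frac{(g-1)g}{2}$ yields the stated formula.

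The main obstacle is exactly this bookkeeping of the powers of $\phi'$ together with a consistent choice of the coordinate in which each quantity is measured: the factor $[\phi'(w)]^{(g-1)g/2}$ coming from the chain rule of Proposition~\ref{prop: wronski} is not the full differential-transformation factor, and one must add the $[\phi'(w)]^g$ arising from $\omega_k = f_k\, dz = f_k\, \phi'(w)\, dw$, while reading $\textrm{ord}_z(h) = -1$ and $\textrm{ord}_z(\mathcal{W}_1) = b_i$ in the base coordinate $z$ (equivalently $\textrm{ord}_w(h) = -m$ upstairs) rather than mixing the two charts. A cleaner route that avoids the chain rule altogether is to work downstairs first: from $\mathcal{W}(z) = h(z)^g \mathcal{W}_1(z)$ with $\textrm{ord}_z(h) = -1$ one gets $\textrm{ord}_z(\mathcal{W}) = b_i - g$, and transforming the $\frac{g(g+1)}{2}$-differential to the $w$-chart multiplies this order by $m$ and adds $\frac{g(g+1)}{2}(m - 1)$, giving $\textrm{wt}_i = m(b_i - g) + \frac{g(g+1)}{2}(m - 1)$, which simplifies to the same expression. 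As a sanity check, for the Octa-$4$ surface ($g = 3$) the preimage $\widetilde{p_1}$ has $m = 8$ and $b_1 = -2$, so the formula returns $\textrm{wt}_1 = 2$, consistent with Fermat's quartic having twelve Weierstrass points each of weight two.
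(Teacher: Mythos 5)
Your proof is correct, and its core is the same as the paper's: specialize Proposition~\ref{prop: wronski} to the local model $z = \phi(w) = w^{m}$ with $m = \frac{d}{\gcd(d, d_i)}$, and add up the orders of the factors at $\widetilde{p_i}$. Where you go beyond the paper is in the two subtleties you flag, and both are genuine: the paper's one-line proof asserts $\frac{\gcd(d,d_i)}{d}\,\textrm{wt}_i = -\frac{\gcd(d,d_i)}{d}\,g - \left(\frac{\gcd(d,d_i)}{d} - 1\right)\frac{(g-1)g}{2} + b_i$, and its first term only comes out right if one both includes the conversion factor $[\phi'(w)]^{g}$ arising from $\omega_k = f_k\,dz = f_k\,\phi'(w)\,dw$ (so that the total differential-transformation factor is $[\phi'(w)]^{g(g+1)/2}$, not just $[\phi'(w)]^{(g-1)g/2}$) and reads $\textrm{ord}_z(h) = -1$ in the base coordinate (equivalently $\textrm{ord}_w(h\circ\phi) = -m$), rather than taking the preceding text's $h(z) = z^{-d/\gcd(d,d_i)}$ at face value; the paper makes neither step explicit, so your bookkeeping is exactly what is needed to make its displayed equation rigorous. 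Your alternative derivation --- computing $\textrm{ord}_z(\mathcal{W}) = b_i - g$ downstairs and then transforming the $\frac{g(g+1)}{2}$-differential to the $w$-chart, which multiplies the order by $m$ and adds $\frac{g(g+1)}{2}(m-1)$ --- is equivalent but cleaner, since it bypasses the chain-rule proposition and its chart ambiguities altogether; both routes give $\textrm{wt}_i = m(b_i - g) + \frac{g(g+1)}{2}(m - 1)$, which is the stated formula, and your Octa-4 check ($m = 8$, $b_1 = -2$, $g = 3$, giving $\textrm{wt}_1 = 2$) agrees with the computation in Appendix~\ref{wronski}.
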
 

\begin{proof}
By Proposition \ref{prop: wronski}, we have $$\frac{\gcd(d, d_i)}{d} \times \textrm{wt}_i = - \frac{\gcd(d, d_i)}{d} \times g - \left(\frac{\gcd(d, d_i)}{d} - 1\right) \times \frac{(g - 1) g}{2} + b_i$$ and the order of $\mathcal{W}(w)$ at $\widetilde{p_i}$ is as desired.
\end{proof}

\subsection*{Weierstrass points on the Octa-4 surface}
\label{subsec: wronski}

In this section, we will locate all Weierstrass points on the Octa-4 surface in two different ways. In \cite{lee2017triply}, the author uses the fact that all vertices on $\Pi$ are similar and shows that all Weierstrass points on the compact quotient of the Octa-4 surface correspond to the vertices on the polyhedral surface. 

\begin{thm*} [\cite{lee2017triply}] The Weierstrass points on the underlying surface of the Octa-4 surface correspond to the vertices on the polyhedral surface.
\end{thm*}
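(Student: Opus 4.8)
The plan is to compute the Weierstrass weight at one vertex explicitly, spread it over all vertices by symmetry, and then close the argument using the global identity that the total weight on a genus-three surface equals $(g-1)g(g+1) = 24$. The whole proof reduces to showing that exactly enough weight is forced onto the vertices to saturate this bound, leaving no room elsewhere.

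First I would examine the center of the tessellation, namely the octavalent vertex $\widetilde{p_1}$. From the explicit basis of holomorphic $1$-forms in Example~\ref{exmp: 125_}, with divisors $(\omega_1) = 4\widetilde{p_3}$, $(\omega_2) = \widetilde{p_1} + \widetilde{p_2}_1 + \widetilde{p_2}_2 + \widetilde{p_3}$, and $(\omega_3) = 4\widetilde{p_1}$, the orders of vanishing at $\widetilde{p_1}$ are $0, 1, 4$. Since these are distinct, $\{\omega_1, \omega_2, \omega_3\}$ is already an adapted basis at $\widetilde{p_1}$, so the gap sequence is $\{1, 2, 5\}$ and $\mathrm{wt}_{\widetilde{p_1}} = (1-1)+(2-2)+(5-3) = 2 > 0$; the same computation at $\widetilde{p_3}$ (vanishing orders $4,1,0$) again gives weight $2$. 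Thus the two branch preimages lying over vertices are Weierstrass points of weight two.

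Next I would propagate this using the automorphism group. Since $\Pi$ is a regular polyhedral surface, its orientation-preserving symmetry group acts transitively on the vertices, and the stabilizer of each vertex is the order-eight rotation fixing it; as $\lvert\mathrm{Aut}(X)\rvert = 96$, orbit--stabilizer gives $96/8 = 12$ vertices. Because automorphisms preserve gap sequences and hence weights, every vertex has weight exactly $2$, so the twelve vertices carry total weight $12\cdot 2 = 24$, which already equals the full weight $(g-1)g(g+1) = 24$. Hence no positive weight can remain at any non-vertex point, and the Weierstrass points are precisely the vertices.

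The hard part will be justifying that all vertices are genuinely equivalent --- that a vertex not lying over a branch point of the order-eight quotient carries the same weight as $\widetilde{p_1}$ --- and that the count really closes with no hidden Weierstrass points; both are secured by vertex-transitivity together with the total-weight identity. As an independent confirmation I would run the Wronskian argument of Corollary~\ref{cor: wronski}: it pins the weights at the branch preimages $\widetilde{p_1}, \widetilde{p_3}$ (each $2$) and at $\widetilde{p_2}_1, \widetilde{p_2}_2$ (each $0$, so these are \emph{not} Weierstrass points), while the remaining Weierstrass points appear as zeros of the reduced Wronskian $\mathcal{W}_1$. The only delicate computation there is to check that $\mathcal{W}_1$ vanishes exactly at the ten remaining vertices; by the same bookkeeping ($10 \cdot 2 = 24 - 4$) this is forced as soon as one knows these ten points are Weierstrass points of weight two.
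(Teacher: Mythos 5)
Your main argument is correct, but it is not the paper's proof: the paper establishes this theorem by the Wronski metric computation (Corollary~\ref{cor: wronski} and Appendix~\ref{wronski}), obtaining $(b_i) = (-2, -1, -2),$ weight $2$ at \emph{every} branch preimage, and $\mathcal{W}_1(z) = (1 + 3z)^2,$ whose zero at $z = -\frac{1}{3}$ has eight preimages of weight $2$ each. The route you take --- weight $2$ at one vertex from the explicit basis of Example~\ref{exmp: 125_}, vertex-transitivity of $\textrm{Aut}(X),$ and saturation of the total weight $(g-1)g(g+1) = 24$ by $12 \cdot 2$ --- is essentially the argument of the cited reference \cite{lee2017triply} (``all vertices are similar''), which this paper mentions but does not carry out. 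That approach is legitimate and efficient, but two points need repair. Transitivity does not follow from Definition~\ref{defn: reg}: regularity of the Euclidean tiling is a local metric condition, and the order-eight rotation is not a Euclidean symmetry of $\Pi;$ what you need is that the hyperbolic rotations $a, b$ generating $\textrm{Aut}(X)$ act transitively on the vertices of the $\{3,8\}$ tessellation. Also, the count of $12$ vertices should come from the combinatorics ($32$ faces, $48$ edges, $\chi = -4$), after which orbit--stabilizer shows the vertex stabilizer has order exactly eight; as written, your use of orbit--stabilizer presupposes the stabilizer is no larger than $\langle a \rangle.$

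Your ``independent confirmation'' paragraph, however, is genuinely wrong. The points $\widetilde{p_2}_1$ and $\widetilde{p_2}_2$ are themselves vertices: they are fixed by $a^2,$ and an order-four rotation preserving a triangle tessellation can only fix vertices (edge midpoints have stabilizer of order two, face centers of order three). Correspondingly, the Wronskian gives them weight $2,$ not $0$: by Corollary~\ref{cor: wronski} with $\frac{d}{\gcd(d, d_2)} = 4$ and $b_2 = -1,$ one gets $4 \cdot (3 - 1) - 6 = 2,$ and the paper's computation indeed returns $\textrm{wt}_i = 2$ for all $i.$ Note that your adapted-basis shortcut fails at these points, since the basis orders there are $0, 1, 0$; in fact a suitable combination of $\omega_1$ and $\omega_3$ vanishes to order $4$ at $\widetilde{p_2}_i.$ Your alternative bookkeeping --- weights $2 + 2$ at $\widetilde{p_1}, \widetilde{p_3},$ weight $0$ at $\widetilde{p_2}_i,$ and ten further vertices of weight $2$ --- is not merely incorrect but impossible: Weierstrass points away from the branch locus arise as full fibers over zeros of $\mathcal{W}_1,$ hence come in groups of eight points of equal weight, so the residual $24 - 4 = 20$ units of weight could never be distributed over ten such points. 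The correct picture is $4 \cdot 2$ at the four branch preimages plus $8 \cdot 2$ on the fiber over $z = -\frac{1}{3},$ and these $4 + 8 = 12$ points are exactly the vertices --- consistent with, and in fact forced by, your own main argument.
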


Next, we compute the Wronksi metric on the compact quotient of the Octa-4 surface using Corollary~\ref{cor: wronski} to locate all Weierstrass points. Given the branching indices $(1, 2, 5)$ and a chart $(z(p_i)) = (0, 1, -1),$ we have $(b_i) = (-2, -1, -2),$ $\textrm{wt}_i= 2$ for all $i,$ and $\mathcal{W}_1(z) = (1 + 3 z)^2$ (Appendix~\ref{wronski}). In other words, there exists a $\widetilde{q} \in X$ where $z(q) = -\frac{1}{3}$ and $\textrm{wt}(\widetilde{q}) = 2.$ This point is fixed by an involution defined by $\mu = 5$ and $\nu = 4$ (Example \ref{exmp: 125}), that interchanges $\widetilde{p_1}$ and $\widetilde{p_3}$ and interchanges $\widetilde{p_2}_1$ and $\widetilde{p_2}_2.$ In particular, this involution is not hyperelliptic. Hence there is no triply periodic minimal surface whose underlying structure is conformally equivalent to the Octa-4 surface. 

\begin{thm*} [\cite{meeks1990theory}] If $M$ is a minimal surface of genus three, then $M$ is hyperelliptic. \end{thm*}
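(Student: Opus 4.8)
The plan is to produce the hyperelliptic pencil directly from the Gauss map of the minimal immersion. In the present context $M$ is the compact genus-three quotient underlying a hypothetical triply periodic minimal surface $\widetilde{M} \subset \R^3$, taken modulo its period lattice $\Lambda$. Since translations of $\R^3$ fix the normal direction, the unit normal of $\widetilde{M}$ is $\Lambda$-invariant and descends to a well-defined Gauss map $g: M \to \mathbb{S}^2 \cong \CP^1$ on the closed surface. The whole argument then reduces to computing $\deg(g)$ and recognizing that a degree-two map to $\CP^1$ is exactly a hyperelliptic structure.

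First I would invoke the classical fact that the Gauss map of a minimal surface is (anti-)conformal, so after fixing the complex orientation $g$ becomes a meromorphic function, i.e. a holomorphic branched covering $g: M \to \CP^1$. In particular $g$ is nonconstant, since a constant Gauss map forces $\widetilde{M}$ to be a plane and a plane has no compact genus-three quotient. Next I would compute the degree via total curvature: for a minimal surface the Gauss map pulls back the round area form of $\mathbb{S}^2$ to $-K\,dA$, where $K \le 0$, so integrating over the closed surface gives $\int_M (-K)\,dA = 4\pi\deg(g)$. On the other hand Gauss--Bonnet yields $\int_M K\,dA = 2\pi\chi(M) = 2\pi(2 - 2\cdot 3) = -8\pi$. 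Combining the two identities gives $4\pi\deg(g) = 8\pi$, hence $\deg(g) = 2$. (More generally the same bookkeeping gives $\deg(g) = \gamma - 1$ in genus $\gamma$, which is why genus three is the distinguished case: only there does the Gauss map land in degree two.)

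Finally, a nonconstant degree-two holomorphic map $g: M \to \CP^1$ from a compact Riemann surface of genus $\ge 2$ is \emph{by definition} a hyperelliptic structure: the nontrivial deck transformation of the two-sheeted cover is the hyperelliptic involution, whose fixed points are the $2g+2 = 8$ branch points of $g$, consistent with Riemann--Hurwitz ($-4 = 2\cdot 2 - B$, so $B = 8$). Therefore $M$ is hyperelliptic. The hard part, and the only genuinely nonformal input, is the pair of classical facts about minimal immersions—the conformality of the Gauss map and the identity $\int_M(-K)\,dA = 4\pi\deg(g)$ relating total curvature to the degree; once these are granted, the remainder is Gauss--Bonnet arithmetic and the definition of hyperellipticity. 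Applied to the Octa-4 surface, whose natural order-two symmetry was shown above \emph{not} to be hyperelliptic, the contrapositive rules out any triply periodic minimal realization of its conformal type.
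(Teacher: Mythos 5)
Your proof is correct, and it is essentially the classical argument behind the cited result: the paper itself states this theorem as an external citation to Meeks and gives no proof, and Meeks' proof is precisely your computation — the Gauss map descends to the compact quotient, has degree $\frac{1}{4\pi}\int_M(-K)\,dA = \gamma - 1 = 2$ by Gauss--Bonnet, and a degree-two meromorphic function on a genus-three surface is by definition a hyperelliptic structure. Nothing is missing; the conformality of the Gauss map and the total-curvature identity are the standard inputs, and your Riemann--Hurwitz consistency check ($B = 8 = 2g+2$ branch points) confirms the bookkeeping.
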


\begin{cor*} [\cite{lee2017triply}] There is no triply periodic minimal surface that has the same conformal type as the Octa-4 surface.\end{cor*}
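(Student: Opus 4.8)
The plan is to prove the corollary by contradiction, leveraging Meeks' theorem stated immediately above together with the non-hyperellipticity of the Octa-4 surface established earlier in this section. First I would suppose that some triply periodic minimal surface $M$ carries the same conformal structure as the compact quotient of the Octa-4 surface. Since that quotient is a genus three Riemann surface, $M$ would be a minimal surface of genus three, and Meeks' theorem would force $M$---and hence the Octa-4 surface itself---to be hyperelliptic.

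The second step is to contradict this by confirming that the Octa-4 surface is genuinely non-hyperelliptic. I would invoke the Wronski metric computation just carried out: it exhibits a Weierstrass point $\widetilde{q}$ of weight $\textrm{wt}_i = 2$ (indeed weight $2$ at every $\widetilde{p_i}$ as well). On a hyperelliptic surface of genus three, however, every Weierstrass point has gap sequence $1, 3, 5$ and therefore weight $\frac{g(g-1)}{2} = 3$; the $2g + 2 = 8$ Weierstrass points then account for the full total weight $(g-1)g(g+1) = 24$. A point of weight $2$ is incompatible with this rigid distribution, so the surface cannot be hyperelliptic. Alternatively, one can argue directly from the algebraic model: Equation~(\ref{eqn: Fermat}) realizes the surface as a smooth plane quartic, whose canonical embedding is the plane quartic itself, whereas the canonical image of a hyperelliptic curve of genus three is a conic.

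Either route closes the argument: hyperellipticity is simultaneously forced (via Meeks) and excluded (via the Weierstrass data), so no such minimal surface $M$ can exist. I do not expect a genuine obstacle here, since the substantive work---computing the Weierstrass weights and identifying the quartic model---is already in hand. The only point requiring care is to upgrade the observation that the distinguished involution fixing $\widetilde{q}$ ``is not hyperelliptic'' to the stronger statement that \emph{no} hyperelliptic involution exists at all; the weight computation supplies this cleanly, since it constrains \emph{every} Weierstrass point, not merely the fixed locus of one particular automorphism.
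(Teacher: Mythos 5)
Your proposal is correct and takes essentially the same route as the paper: Meeks' theorem forces any genus three minimal surface to be hyperelliptic, while the Wronski/Weierstrass-weight computation (every $\widetilde{p_i}$ and the extra point $\widetilde{q}$ have weight $2$, impossible on a hyperelliptic genus three surface where all Weierstrass points have weight $3$), or alternatively the smooth plane quartic model of Equation~(\ref{eqn: Fermat}), shows the Octa-4 quotient is not hyperelliptic. Your explicit upgrade from the paper's remark that one particular involution ``is not hyperelliptic'' to the global statement that no hyperelliptic involution can exist is exactly the right point of care, and the weight argument supplies it.
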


\section{Regular Triply Periodic Polyhedral Surfaces}
\label{ch4 reg tpps}
In Section~\ref{ch2 octa-4}, we discussed a regular triply periodic polyhedral surface (Definition~\ref{defn: reg}) that was not included in Coxeter-Petrie's classification of infinite regular polyhedral surfaces due to its lack of Euclidean isometries. To loosen the criteria and broaden the classification, we observe a common factor between the construction of the Mucube, Muoctahedra, Mutetrahedra, and the Octa-4 surface. The Mucube (Muoctahedra and Mutetrahedra, respectively) is the boundary of a triply periodic polyhedron built by cubes (truncated octahedra and truncated tetrahedra, respectively) in a periodic manner. Similarly, the Octa-4 surface arises as the boundary surface of a polyhedron built by regular octahedra. In this section, we seek more examples of triply periodic polyhedral surfaces that arise from such a construction. We formulate a gluing pattern of regular solids using tools from graph theory. We define a \textit{decoration} of a skeletal graph in $\R^3$ to construct triply periodic polyhedra. Then, we put restrictions on the decoration to find regular triply periodic polyhedra and classify all triply periodic polyhedral surfaces that arise as the boundary of such decorations.

\subsection{Triply periodic symmetric skeletal graphs}
\label{sec: defs}

As we are interested in triply periodic polyhedra, we look at their deformation retracts as triply periodic skeletal graphs in $\R^3.$ A \textit{skeletal graph} or a \textit{1-skeleton} of a topological space $X$ is a simplicial complex that is a union of the 0-simplices and 1-simplices of $X.$ 

\begin{exmp} [Octa-4]
\label{exmp: rad octa-4} 
The Octa-4 surface is the boundary of the triply periodic polyhedron we built with regular octahedra. The skeletal graph of the polyhedron is its deformation retract. Each Type B octahedron is adjacent to four (Type A) octahedra, hence we retract each Type B octahedron to its center, a 0-simplex. Similarly, each Type A octahedron is adjacent to two (Type B) octahedra, so we retract each Type A octahedron to a 1-simplex that is incident to two 0-simplices (Type B octahedra). Then the skeletal graph of the solid polyhedron is connected and triply periodic where all of its 0-simplices are incident to four 1-simplices.
\end{exmp}

We are interested in connected, simple, and symmetric skeletal graphs that are embedded in $\R^3.$ All definitions follow from \cite{iyanaga1977encyclopedic}. 

\begin{defn}
A \textit{skeletal graph} $\Gamma = \{V, E\}$ consists of a set $V$ of vertices (0-simplices) and a set $E$ of edges (1-simplices). An edge $e \in E$ is a 2-element subset of $V$ which we denote as an unordered pair $e = \{v_1, v_2\}$ for some $v_1, v_2 \in V.$ A pair of vertices $\{v, v'\}$ is \textit{connected} if sequence of edges leads from $v$ to $v'.$ A \textit{connected graph} is a graph in which every unordered pair of vertices in the graph is connected. We say that a graph is \textit{regular} if every vertex has the same number of incident edges. The number of edges incident to a vertex is called the \textit{degree} of the vertex. If all vertices have the same degree $d,$ we say that a graph is a $d$-regular graph or a regular graph of degree $d.$ We say that a graph is \textit{symmetric} if given any two edges $\{v_1, v_2\}, \{v_1', v_2'\},$ there is an automorphism $\varphi: V \rightarrow V$ such that $\varphi(v_1) = v_1'$ and $\varphi(v_2) = v_2'.$ Lastly, a \textit{simple graph} is a graph that does not allow neither multiple edges nor loops, \textit{multiple edges} are two or more edges that connect the same two vertices, and a \textit{loop} is an edge that connects a vertex to itself. 
\end{defn}

If a graph is symmetric, then its group of automorphisms acts transitively on the edges and hence on the vertices. Note that a connected symmetric graph is regular. In the Euclidean setting, we require the automorphisms to be Euclidean isometries. Therefore, we need to define lengths of edges and angles between two incident edges. We define the length of an edge $e = \{v_1, v_2\}$ by $\|v_2 - v_1\|$ using the Euclidean norm and the angles between two edges $e = \{v, v_1\}$ and $e' = \{v, v_1'\}$ by $\arccos\left(\frac{(v_1 - v) \cdot (v_1' - v)}{\|v_1 - v\| \|v_1' - v\|}\right).$ We say that a skeletal graph $\Gamma$ is \textit{triply periodic} (or \textit{doubly periodic,} respectively) if $\Gamma$ is invariant under $\Lambda,$ a rank-three (or two, respectively) lattice. Given a periodic skeletal graph $\Gamma,$ we define its \textit{compact quotient graph} by $\Gamma' = \{V', E'\} := \Gamma / \Lambda.$ Figure~\ref{fig: symm quot} shows examples of doubly periodic graphs, namely the square tiling and the hexagonal tiling on $\R^2.$ On each graph, the fundamental piece is bounded by the dotted lines. To the rights are their compact quotient graphs. 

\begin{figure}[htbp] 
\centering
\begin{minipage}{.5\textwidth}
	\centering
	\includegraphics[width=2in]{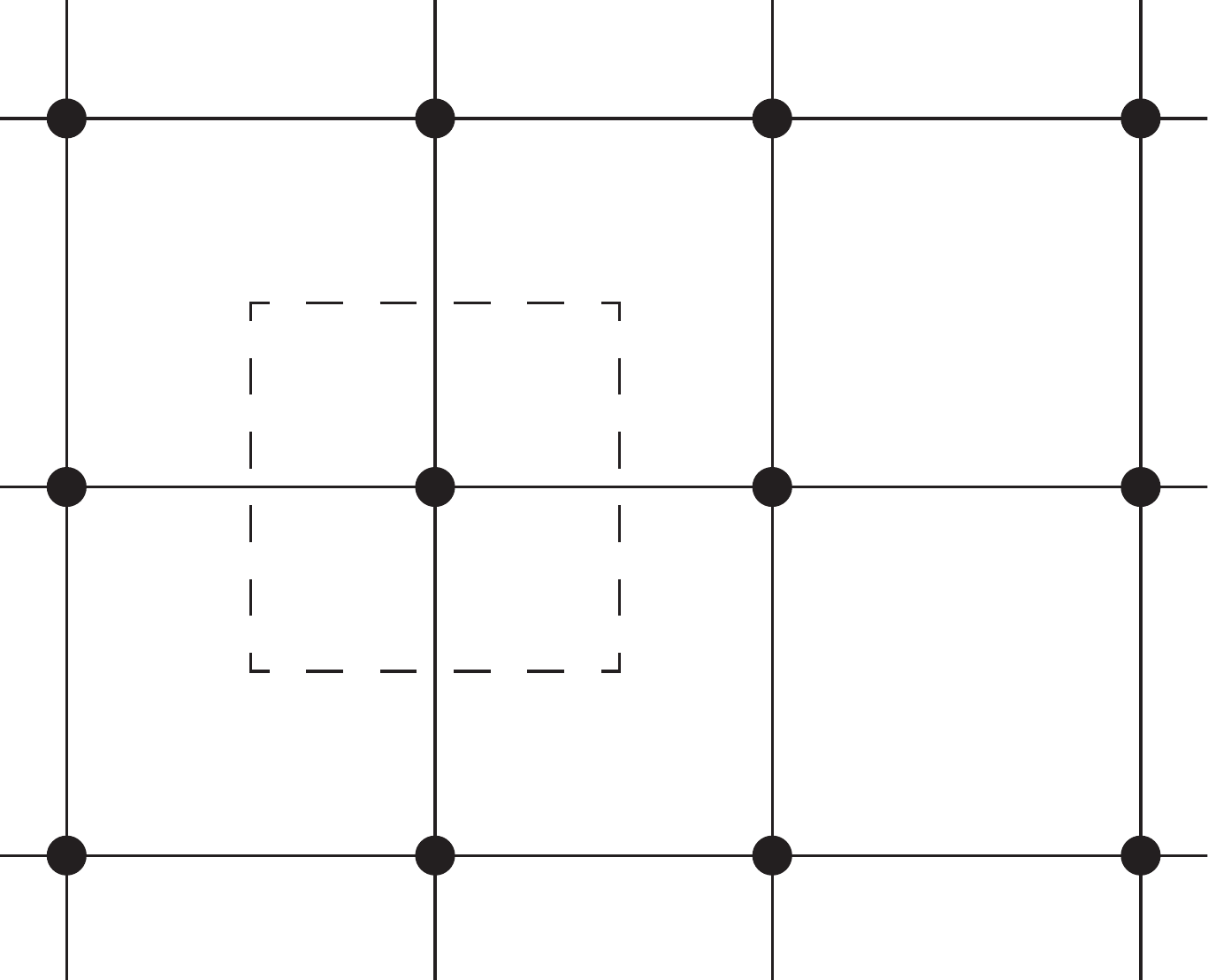}
	\end{minipage}
\begin{minipage}{.5\textwidth}
	\centering
	\includegraphics[width=1.5in]{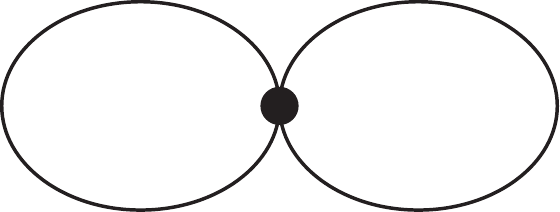}
	\end{minipage}
\vspace{0.35cm}

\begin{minipage}{.5\textwidth}
	\centering
	\includegraphics[width=2in]{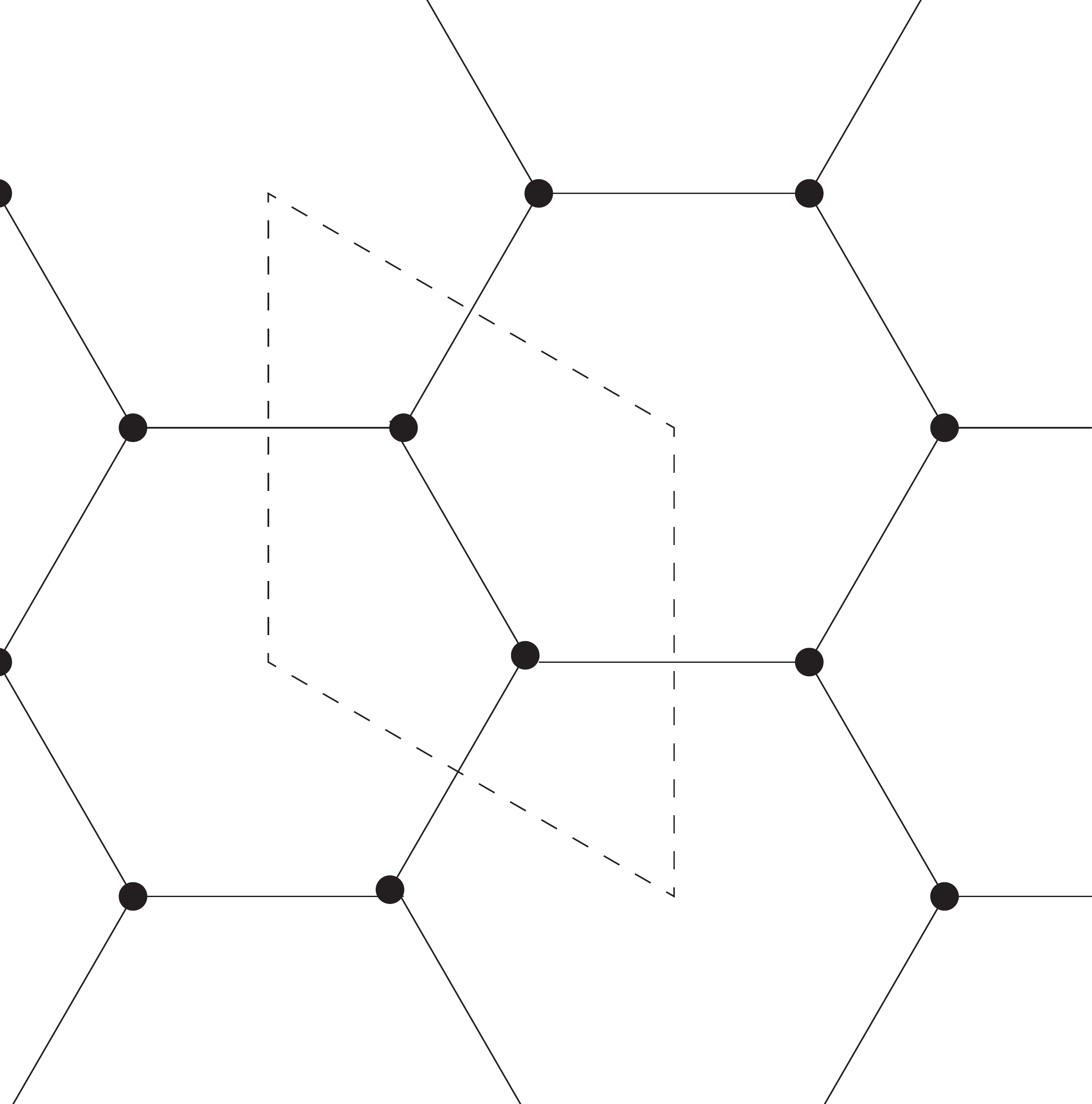}
	\end{minipage}
\begin{minipage}{.5\textwidth}
	\centering
	\includegraphics[width=1.2in]{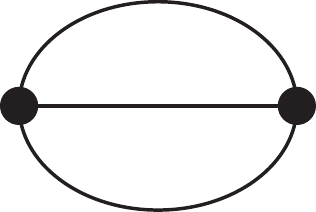}
	\end{minipage}
	\caption{Doubly periodic graphs and their quotients}
	\label{fig: symm quot}
\end{figure}

Next, we define the genus of periodic skeletal graphs. 

\begin{defn}\label{defn: genus graph} Given a graph $\Gamma$ that has $\textrm{v}$ 0-simplices and $\textrm{e}$ 1-simplices, we define the genus of $\Gamma$ as $\textrm{genus}(\Gamma) := \textrm{e} - \textrm{v} + 1.$ 
\end{defn}

Given a skeletal graph $\Gamma$ in $\R^3,$ we let $\mathcal{N}_{\delta}(\Gamma)$ be the $\delta$-tubular neighborhood of $\Gamma$ for small enough $\delta > 0$ so that $\mathcal{N}_{\delta}(\Gamma)$ embeds in $\R^3.$ Then $\partial \mathcal{N}_{\delta}(v)$ is isotopic to a sphere with $d$-holes and $\partial \mathcal{N}_{\delta}(e)$ is isotopic to a cylinder. The Euler characteristic of a $d$-punctured sphere is $2 - d$ and the Euler characteristic of a cylinder is zero. The genus of $\Gamma$ is defined as the genus of $\partial \mathcal{N}_{\delta}(\Gamma).$ For a $d$-regular periodic skeletal graph, we consider its quotient $\Gamma' = \{V', E'\}$ and denote by $\textrm{v} = |V'|.$ Then $\textrm{e} := |E'| = \frac{\textrm{v} d}{2}.$ Then, \begin{equation}\label{eqn: graph genus} 2 - 2 \cdot \textrm{genus}(\Gamma) = \textrm{v} (2 - d) + \frac{\textrm{v} d}{2} \cdot 0.\end{equation} We label each quotient graph by $\Gamma_{g, \textrm{v}, d}'.$ \\

The following proposition proves that given $g > 1,$ there are finitely many symmetric quotient graphs of genus $g.$ 

\begin{prop}\label{prop: fin quot graph} Let $\Gamma$ be a periodic skeletal graph of genus $> 1$ whose quotient is symmetric. Then, the degree of the graph is $\geq 3.$ Furthermore, for a given genus $> 1,$ there are finitely many symmetric quotient graphs of the type $\Gamma_{g, \textrm{v}, d}'.$
\end{prop}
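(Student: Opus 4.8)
The plan is to reduce both claims to the graph genus formula (\ref{eqn: graph genus}), using only the elementary fact, already noted in the text, that a connected symmetric graph is regular.

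First I would fix the degree. Since $\Gamma'$ is connected and symmetric, its automorphism group acts transitively on vertices, so all vertices share a common degree $d$; the same $d$ is then the degree of the periodic cover $\Gamma.$ This makes (\ref{eqn: graph genus}) applicable, and rearranging it gives the single relation I will use throughout,
$$2\,\textrm{genus}(\Gamma) - 2 = \textrm{v}\,(d - 2), \qquad \textrm{v} = |V'| \geq 1.$$

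For the degree bound I would simply inspect signs. Writing $g = \textrm{genus}(\Gamma) > 1,$ the left-hand side $2g - 2$ is a positive integer while $\textrm{v} \geq 1,$ forcing $d - 2 > 0.$ The excluded small cases are immediate: $d = 2$ gives $2g - 2 = 0,$ i.e. $g = 1,$ and $d = 1$ gives $2g - 2 = -\textrm{v} < 0,$ i.e. $g \leq 0;$ both contradict $g > 1.$ Hence $d \geq 3.$

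For the finiteness statement I would fix $g > 1,$ so that $2g - 2$ is a fixed positive integer, and read the relation above as a factorization $\textrm{v}\cdot(d-2) = 2g-2$ into two positive integers (using $d - 2 \geq 1$ from the first part). A positive integer has only finitely many such factorizations, so there are only finitely many admissible pairs $(\textrm{v}, d),$ namely $\textrm{v}$ ranging over divisors of $2g - 2$ and $d = 2 + (2g-2)/\textrm{v}.$ For each fixed pair the quotient graph has exactly $\textrm{v}$ vertices and $\textrm{e} = \textrm{v}d/2$ edges, both now determined, and up to isomorphism there are only finitely many graphs on a fixed vertex set with a prescribed number of edges; a fortiori only finitely many symmetric ones. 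Combining the two finite counts gives finitely many $\Gamma'_{g,\textrm{v},d}.$

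The one place that genuinely needs care — the closest thing to an obstacle — is this last combinatorial count, because the quotient $\Gamma' = \Gamma/\Lambda$ need not be a simple graph even though $\Gamma$ is: collapsing by the lattice can produce loops and multiple edges, as is already visible for the square and hexagonal tilings in Figure~\ref{fig: symm quot}. I would therefore run the count over multigraphs with loops allowed, noting that there are at most $\binom{\textrm{v}}{2} + \textrm{v}$ edge ``slots'' (unordered pairs of vertices together with loops) and that distributing the fixed number $\textrm{e}$ of edges among these slots admits only finitely many outcomes. Everything else is a direct consequence of (\ref{eqn: graph genus}).
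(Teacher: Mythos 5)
Your proof is correct and takes essentially the same route as the paper's: both rest on the identity $2g - 2 = \textrm{v}\,(d-2)$ from (\ref{eqn: graph genus}), deduce $d \geq 3$ from the positivity of $2g-2$, and obtain finiteness from the finitely many factorizations of $2g-2$ into $\textrm{v}\cdot(d-2)$. Your closing count of multigraphs with a fixed number of vertices and edges makes explicit a point the paper treats only implicitly through its case distinction ($\textrm{v}=1$ forces the bouquet of $g$ loops; $\textrm{v}\geq 2$ forces multiple edges and no loops), but the substance of the argument is the same.
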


\begin{proof} We refer to (Equation~\ref{eqn: graph genus}). For any given $g > 1,$ $2 g - 2$ has finitely many divisors. Hence, there are only finitely many quotient graphs that can be labeled as $\Gamma_{g, \textrm{v}, d}'$ for a fixed $g.$ If $\textrm{v} = 1,$ then $d = 2 g,$ so the quotient graph has one vertex and $g$ loops. If $\textrm{v} \geq 2,$ then due to connectivity and symmetry, we can only allow multiple edges on the quotient graph. Then, $\textrm{e} = g + \textrm{v} + 1$ and $d = \frac{2 g - 2}{\textrm{v}} + 2,$ given that this number is an integer. Therefore, $d \geq 3.$
\end{proof}

The following shows all symmetric quotient graphs of genus three and four. If $g = 3,$ then $4 = \textrm{v} (d - 2),$ hence $(\textrm{v}, d)$ is $(1, 6), (2, 4),$ or $(4, 3).$ If $g = 4,$ then $6 = \textrm{v} (d - 2),$ so $(\textrm{v}, d)$ is $(1, 8), (2, 5), (3, 4),$ or $(6, 3).$

\begin{table}[htbp]
\centering
\begin{tabular}[h]{|c|c|c|c|c|c|c|c|}
\hline
$g$ & $\textrm{v}$ & $d$ & & $g$ & $\textrm{v}$ & $d$ & \\ \hline
3 & 1 & 6 & \includegraphics[height=0.4in]{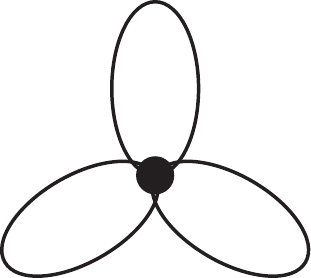} & 4 & 1 & 8 & \includegraphics[height=0.4in]{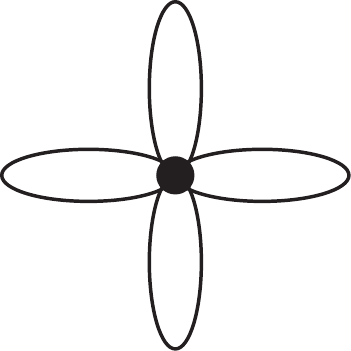} \\
3 & 2 & 4 & \includegraphics[height=0.4in]{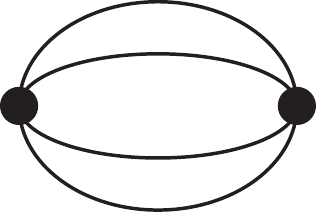} & 4 & 2 & 5 & \includegraphics[height=0.4in]{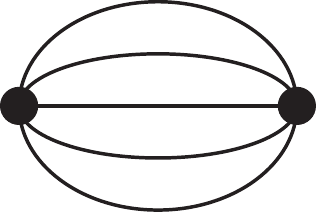} \\
3 & 4 & 3 & \includegraphics[height=0.4in]{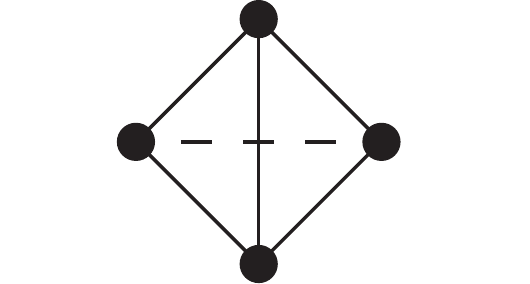} & 4 & 3 & 4 & \includegraphics[height=0.4in]{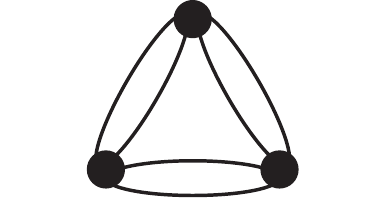} \\
& & & & 4 & 6 & 3 & \includegraphics[height=0.4in]{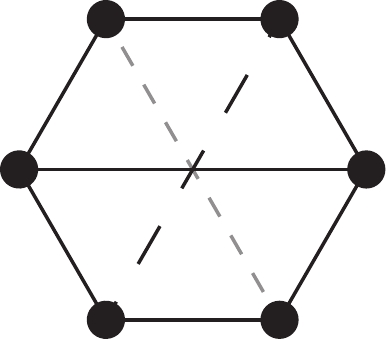} \\ \hline
\end{tabular}
	\label{tab: symmetric quotient graphs}
	\caption{Symmetric quotient graphs of $\textrm{genus}(\Gamma) = 3, 4$}
\end{table}

Although we require the skeletal graphs $\Gamma$ to be simple, we do not impose this on their quotient graphs. 

\subsection{Classification of regular triply periodic polyhedral surfaces}
\label{sec: RAD}

In Example~\ref{exmp: rad octa-4}, we showed that a triply periodic polyhedron can be retracted to a triply periodic skeletal graph in $\R^3.$ In this section, we will define a \textit{decoration} of a skeletal graph as a way to associate a polyhedral surface. Given a skeletal graph, we will regard its tubular neighborhood that consists of solids.

\begin{defn} \label{defn: deco} Given a skeletal graph $\Gamma$ embedded in $\R^3,$ a \textit{decoration} of $\Gamma$ is a polyhedron achieved by replacing the 0-simplices and 1-simplices with convex polyhedral solids (including the empty solid) so that i) there is a deformation retract of the polyhedron to the skeletal graph and ii) the polyhedra are identified only along faces. In essence, if a 0-simplex and a 1-simplex in $\Gamma$ are incident, then their corresponding replacement solids in the decoration are identified along a face. A \textit{regular decoration} of $\Gamma$ is a decoration whose boundary surface is regular. That is, the surface is tiled by regular $p$-gons with the same valency at every vertex. Moreover, the solids must be adjacent to only one type of polygon. A regular decoration is denoted by $\{p, q| r\}$ if the boundary surface is tiled by regular $p$-gons, $q$ at each vertex, and the intersection of adjacent solids is a regular $r$-gon. The notation $\{p, q|r\}$ is called the Schl{\"a}fli symbols.
\end{defn} 

In Proposition~\ref{prop: fin quot graph} we showed that given a periodic skeletal graph of $g > 1$ whose quotient is symmetric, the degree of the graph is at least three. Note that each 0-simplex is incident to $d$ 1-simplices and each 1-simplex is incident to two 0-simplices. Hence, if a polyhedron is adjacent to three or more polyhedra, then it retracts to its center as a 0-simplex. On the other hand, if a polyhedron is adjacent to only two polyhedra, then it retracts to a 1-simplex that is incident to the centers of the two adjacent polyhedra.\\

The next definition puts restrictions on the solids that replace the 0- and 1-simplices.

\begin{defn}
An \textit{Archimedean decoration} of $\Gamma$ is a decoration where only Platonic solids and Archimedean solids are allowed to replace the 0-simplices. For the 1-simplices, we only allow prisms and anti-prisms over regular polygons, and empty solids as replacement solids.
\end{defn}

If two solids that are adjacent to each other both retract to 0-simplices, we allow an empty solid to replace the 1-simplex that is incident to the 0-simplices.

\begin{exmp} For example, the Octa-4 surface (Example \ref{exmp: rad octa-4}) is a regular Archimedean decoration of $\Gamma_{3, 2, 4}$ where the 0-simplices are replaced with regular octahedra and the 1-simplices are replaced with triangular anti-prisms (which are essentially octahedra). The surface is denoted by $\{3, 8| 3\}$ in Schl{\"a}fli symbols. Also, the Mucube is a regular Archimedean decoration of $\Gamma_{3, 1, 6}'$ where the 0-simplex is replaced with a cube and the 1-simplices are replaced with square prisms (that is, cubes). The boundary surface is tiled by squares, six around each vertex, and the solids are only adjacent along squares, hence the Mucube is denoted as $\{4, 6| 4\}$ in Schl{\"a}fli symbols. The Muoctahedron is a regular Archimedean decoration of $\Gamma_{3, 1, 6}'$ where the 0-simplex is replaced with a truncated octahedron and the 1-simplices are replaced with empty solids. In other words, the truncated octahedra are adjacent to each other along their square faces. The surfaces is tiled by hexagons, four around each vertex, therefore denoted as $\{6, 4| 4\}.$ Lastly, the Mutetrahedron is a regular Archimedean decoration of $\Gamma_{3, 2, 4}'$ where one of the 0-simplices is replaced with a tetrahedron, the other is replaced with a truncated tetrahedron, and the 1-simplices are replaced with empty solids. In other words, the tetrahedra and truncated tetrahedra are adjacent to each other along the triangular faces. Its boundary surface is tiled by hexagons, three around each vertex, hence denoted by $\{6, 3| 3\}.$\\
\end{exmp}

In the remainder of the section, we will prove that there are only finitely many regular Archimedean decorations for a given skeletal graph. In Lemma \ref{lem: fin rad}, we will show all possible combinations of solids that we can use to decorate a skeletal graph. Then in Theorem \ref{thm: finite decorations3} and Theorem \ref{thm: finite decorations4}, we will show all regular polyhedral surfaces of genus three and four, respectively, that arise from a regular Archimedean decoration of a symmetric quotient graph.

\begin{lemma} \label{lem: fin rad} Given a skeletal graph $\Gamma \subset \R^3,$ there are at most finitely many regular Archimedean decorations of $\Gamma.$
\end{lemma}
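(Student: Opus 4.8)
The plan is to split the data of a regular Archimedean decoration into the solids assigned to the $0$-simplices and those assigned to the $1$-simplices, and to bound each independently. By the definition of an Archimedean decoration, every $0$-simplex is replaced by one of the five Platonic or thirteen Archimedean solids; this is a fixed finite list of eighteen combinatorial types, so there are only finitely many choices at each $0$-simplex. The only potential source of infinitely many decorations is therefore the assignment to the $1$-simplices, since prisms and antiprisms over a regular $r$-gon form infinite families as $r$ ranges over all integers $\geq 3$.

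First I would record what the regularity condition in Definition~\ref{defn: deco} forces on the gluing faces. A $1$-simplex solid is glued to its two incident $0$-simplex solids along faces, and by the requirement that adjacent solids meet in a regular $r$-gon, each gluing face is a regular $r$-gon. A prism (resp.\ antiprism) over a regular polygon is glued to the $0$-simplex solids exactly along its two base faces, so its base is this $r$-gon. Since the base must coincide with an actual face of a Platonic or Archimedean solid, and every face of such a solid is a regular $m$-gon with $3 \leq m \leq 10$, the base size $r$ is bounded by $10$. Consequently only finitely many prisms and finitely many antiprisms can occur as $1$-simplex solids, and together with the empty solid this gives a finite list of admissible replacements for each $1$-simplex.

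Next I would invoke the periodicity to conclude. Since $\Gamma$ is triply periodic, its compact quotient graph $\Gamma'$ has finitely many vertices and edges (Proposition~\ref{prop: fin quot graph}), and a decoration compatible with the lattice is determined by its restriction to $\Gamma'$. A decoration is thus specified by a choice of solid at each of the finitely many $0$-simplices (eighteen choices each) together with a choice at each of the finitely many $1$-simplices (finitely many choices each, by the previous step). A finite product of finite sets is finite, so there are at most finitely many regular Archimedean decorations of $\Gamma$.

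The step I expect to be the main obstacle is pinning down the prisms and antiprisms: one must argue carefully that regularity genuinely forces the base of each prism or antiprism to match a face of an adjacent solid, rather than, say, allowing a prism to be glued along a square side face. I would rule this out by noting that the exposed side faces of a prism are squares and of an antiprism are triangles, so that if a prism or antiprism is used its exposed side faces must coincide with the uniform regular $p$-gon tiling the boundary surface, forcing $p = 4$ or $p = 3$ respectively. This both confirms that the bases are the gluing faces and shows that the Schl\"afli data $\{p, q \mid r\}$ is constrained to finitely many values, completing the finiteness.
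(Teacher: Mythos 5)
Your treatment of the $1$-simplices is actually more careful than the paper's own proof: the observation that a prism or anti-prism must be glued along its bases to faces of Platonic or Archimedean solids (so the base is an $m$-gon with $3 \leq m \leq 10$), and that its exposed side faces must match the tiling polygon, forcing $p = 4$ or $p = 3$, is exactly the right way to kill the infinite families of prisms and anti-prisms. The paper leaves this entirely implicit; it simply speaks of ``one type of prism or anti-prism'' attached to the chosen faces. So on the local counting your proposal is sound and even an improvement.

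However, your final step has a genuine gap. From ``finitely many admissible solids at each simplex'' you conclude finiteness by taking a product over the simplices of the compact quotient $\Gamma'$ --- but that only counts decorations that descend to $\Gamma',$ i.e.\ lattice-periodic ones. Definition~\ref{defn: deco} does not require a decoration of the infinite graph $\Gamma$ to be invariant under the lattice, so your phrase ``a decoration compatible with the lattice is determined by its restriction to $\Gamma'$'' restricts the class of decorations without justification: a priori, with two or more admissible solids per simplex and infinitely many simplices, one could attempt uncountably many non-periodic assignments. What rules these out is not periodicity but regularity, and this is where the paper's (admittedly terse) proof does its work: it argues that a regular boundary surface forces one of three \emph{global} patterns --- all $0$-simplices carry the same Platonic/Archimedean solid and all $1$-simplices the same prism or anti-prism; all $0$-simplices the same solid and all $1$-simplices empty; or $0$-simplices with different solids, which forces every $1$-simplex to be empty. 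Each pattern involves only finitely many global choices, so finiteness follows without ever invoking periodicity of the decoration. To close your argument you would need to prove such a uniformity statement --- e.g.\ that the fixed Schl{\"a}fli data $\{p, q \mid r\}$ of the regular boundary surface, together with the constant vertex valency $q,$ pins down the solid at every $0$-simplex up to finitely many global choices --- which is in effect the content of the paper's trichotomy.
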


\begin{proof}
Recall that a decoration of a skeletal graph is a replacement of the 0- and 1-simplices with convex polyhedra. We will show that there are at most finitely many possible combinations of solids that we can use to replace the 0- and 1-simplices. One way to decorate a graph is to replace the 0-simplices with one type of Platonic or Archimedean solid and the 1-simplices with one type of prism or anti-prism. The Octa-4 surface and the Mucube arise from such construction. Secondly, we can replace the 0-simplices with one type of Platonic or Archimedean solid and the 1-simplices with the empty solid. An example is the Muoctahedra. Lastly, we replace the 0-simplices by different types of Platonic or Archimedean solids. This forces us to replace the 1-simplices with the empty solid, otherwise the boundary surface cannot be regular. The Mutetrahedra is constructed this way. 
\end{proof}

Combining Proposition \ref{prop: fin quot graph} and Lemma \ref{lem: fin rad}, in the following theorems we show all regular Archimedean decorations of symmetric quotient graphs of genus three and four.

\begin{thm} \label{thm: finite decorations3} There exist only finitely many genus three regular triply periodic polyhedral surfaces that are regular Archimedean decorations of $\Gamma_{3, 1, 6}'$ and $\Gamma_{3, 2, 4}'.$
\end{thm}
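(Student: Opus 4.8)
The plan is to deduce the theorem from the two finiteness results already established: Proposition~\ref{prop: fin quot graph}, which says that there are only finitely many symmetric quotient graphs of a given genus, and Lemma~\ref{lem: fin rad}, which says that a fixed skeletal graph $\Gamma \subset \R^3$ admits only finitely many regular Archimedean decorations. From the discussion following Proposition~\ref{prop: fin quot graph}, the genus-three symmetric quotient graphs are exactly those with $(\textrm{v}, d) \in \{(1,6),(2,4),(4,3)\}$. Since a decoration deformation retracts onto $\Gamma$, its boundary surface is isotopic to $\partial\mathcal{N}_\delta(\Gamma)$ and hence has genus equal to $\textrm{genus}(\Gamma)=3$; thus every genus-three regular triply periodic polyhedral surface of this type arises as a decoration of one of these graphs, and in the statement we restrict to the two named graphs $\Gamma'_{3,1,6}$ and $\Gamma'_{3,2,4}$.

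First I would give the immediate argument: apply Lemma~\ref{lem: fin rad} separately to each of the two fixed skeletal graphs to obtain finitely many regular Archimedean decorations of each, so that their union is finite. This already proves the theorem. The remaining work, which is the genuine content, is to make this bound effective by enumerating the decorations, since the explicit list both confirms finiteness concretely and identifies the surfaces.

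To carry out the enumeration I would use the degree $d$ of each graph to restrict the admissible $0$-simplex solids. A $0$-simplex of degree $d$ must be replaced by a Platonic or Archimedean solid carrying $d$ symmetrically placed gluing faces, with the complementary faces forming the boundary tiling. For $\Gamma'_{3,1,6}$ ($d=6$) the candidates are solids with six symmetric gluing faces, such as the cube (all six faces) or the truncated octahedron (its six squares); for $\Gamma'_{3,2,4}$ ($d=4$) one assigns to each of the two vertices a solid with four pairwise non-adjacent gluing faces, either equal (as with two octahedra) or distinct (as with a tetrahedron paired with a truncated tetrahedron). Because only $5+13=18$ Platonic and Archimedean solids exist, and the $1$-simplex replacements are restricted to a prism, anti-prism, or the empty solid over the chosen gluing polygon, only finitely many combinations arise. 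For each combination I would verify the regularity condition $\{p,q\mid r\}$, recovering the Mucube $\{4,6\mid 4\}$ and Muoctahedron $\{6,4\mid 4\}$ for $\Gamma'_{3,1,6}$, and the Octa-4 $\{3,8\mid 3\}$ and Mutetrahedron $\{6,3\mid 3\}$ for $\Gamma'_{3,2,4}$.

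The hard part will be the case-by-case verification that each surviving combination of solids actually yields a \emph{regular} boundary surface. Counting faces of a candidate solid is routine, but confirming uniform vertex valency $q$ together with the single-polygon adjacency requirement demands care at the seams where a $0$-simplex solid meets a $1$-simplex solid: an otherwise plausible choice can fail regularity precisely there, either by producing two different polygon types around a vertex or by producing vertices of differing valencies. This verification is where most of the labor lies, whereas the finiteness assertion itself follows at once from Lemma~\ref{lem: fin rad}.
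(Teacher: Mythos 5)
Your proposal is correct and follows essentially the same route as the paper: the paper's proof is exactly the degree-based case enumeration you describe (candidate Platonic/Archimedean solids with $d$ symmetrically placed gluing faces on the 0-simplices, then prisms, anti-prisms, or empty solids on the 1-simplices, with regularity checked case by case), and your preliminary appeal to Lemma~\ref{lem: fin rad} for bare finiteness matches the paper's own framing that the theorem comes from combining Proposition~\ref{prop: fin quot graph} with that lemma. One small caveat: your enumeration under-counts the paper's list --- the paper also finds an octahedron with six triangular anti-prisms for $\Gamma_{3, 1, 6}',$ and decorations by two icosahedra, two tetrahedra, and two cubes for $\Gamma_{3, 2, 4}'$ --- but since the theorem asserts only finiteness, this omission does not affect the validity of your argument.
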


\begin{proof} We will prove this theorem by case distinction. Given a symmetric quotient graph (Proposition \ref{prop: fin quot graph}), we sort out the Platonic and Archimedean solids that can replace the 0-simplices based on the degree of the graph. The solids must have $d$ faces placed in a symmetric way in $\R^3$ so that we can replace the 0-simplices of a degree $d$ graph. For this reason, we do not consider the quotient graph $\Gamma_{3, 4, 3}'$ as we cannot choose three faces from a Platonic or Achimedean solid in a symmetric way. Then, for each solid, depending on the remaining faces, we select the solids that can replace the 1-simplices of the graph. That is, we consider prisms if the remaining faces are squares and anti-prisms if the remaining faces are triangles.\\

We begin with the quotient graph $\Gamma_{3, 1, 6}'.$ Since $d = 6,$ the candidates that can replace the 0-simplex are a cube, octahedron, cuboctahedron (six squares and eight triangles), truncated cube (six octagons and eight triangles), truncated octahedron (six squares and eight hexagons), and a snub cube. Since $\textrm{v} = 1,$ we must choose the six $r$-gonal faces as three pairs of opposite faces. The three parallel translations will form a lattice of translations in $\R^3.$ \begin{itemize}\item If the six $r$-gonal faces are placed as the bases of a prism and the remaining faces are squares, then we consider $r$-gonal prisms to replace the 1-simplices. \textbf{A cube with six square prisms} (the Mucube) arises this way. \item If the $r$-gonal faces are placed as the bases of a prism but the rest of the faces are not squares, then we consider empty solids to replace the 1-simplices. \textbf{A truncated octahedron} (Muoctahedron) arises this way. Other cases are a cuboctahedron and a truncated cube. However, in both cases the boundaries of the decorations are disconnected. \item If the $r$-gonal faces are placed as the bases of an anti-prism involving a $\frac{\pi}{r}$-rotation and the rest of the faces are triangles, then we consider $r$-gonal anti-prisms to replace the 1-simplices. \textbf{An octahedron with six triangular anti-prisms} arises this way. \end{itemize}

Lastly, for the rest of the cases we consider empty solids. A snubcube is an Archimedean solid that has chirality, hence it cannot replace a 0-simplex of a quotient graph that has only one 0-simplex \footnote{However, it can replace a 0-simplex of a quotient graph if $\textrm{v} = 2$ and $d = 6.$ By (\ref{eqn: graph genus}), this is the quotient graph $\Gamma_{5, 2, 6}'.$}. \\

Next we look at $\Gamma_{3, 2, 4}'.$ As $d = 4,$ the candidates that can replace the 0-simplices are a tetrahedron, cube, octahedron, icosahedron, and a truncated tetrahedron. We claim that there are two distinct ways to choose four faces from the solids. \begin{itemize}\item One way is to observe the tetrahedral symmetry, where the corresponding solids are the tetrahedron, octahedron, icosahedron, and truncated tetrahedron. After we choose four faces from an octahedron or icosahedron by tetrahedral symmetry, the remaining faces are triangles, hence we replace the 1-simplices with triangular anti-prisms and achieve \textbf{two octahedra with four triangular anti-prisms} (Octa-4) and \textbf{two icosahedra with four triangular anti-prisms.} We can also inscribe a tetrahedron in a cube as we do an octahedron in \cite{lee2017triply}, hence we have a decoration by \textbf{two tetrahedra with triangular anti-prisms.} 

Once we choose four triangular faces from a truncated tetrahedron, the remaining faces are hexagons, hence the 1-simplices must be replaced with empty solids. Instead of replacing both 0-simplices with truncated tetrahedra, we replace one with a tetrahedron. \textbf{A truncated tetrahedron and tetrahedron with empty solids} yields the Mutetrahedron. On the other hand, if we choose four hexagonal faces from a truncated tetrahedron, the remaining faces are triangles, hence one can attempt to replace the 1-simplices with hexagonal anti-prisms. However, this does not yield an embedded surface in $\R^3.$ \item Another way of choosing four faces from a solid is to choose two pairs of opposite faces. Then we have two possibilities: \textbf{two cubes with four prisms} and \textbf{two octahedra with four triangular anti-prisms} (Figure~\ref{fig: 324}). 

\begin{figure}[htbp] 
\centering
\begin{minipage}{.5\textwidth}
	\centering
	\includegraphics[width=3in]{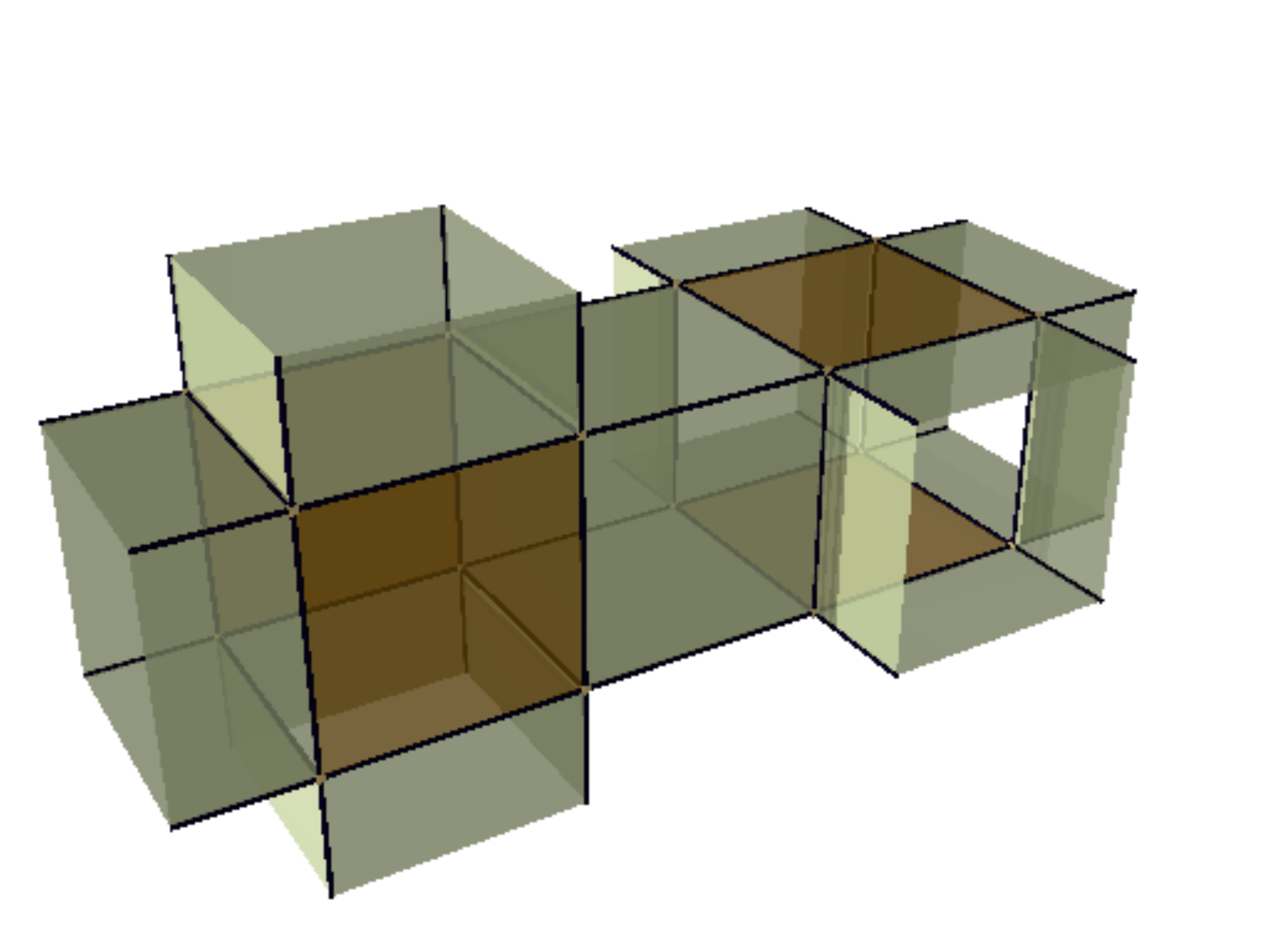} 
\end{minipage}%
\begin{minipage}{.5\textwidth}
	\centering
	\includegraphics[width=3in]{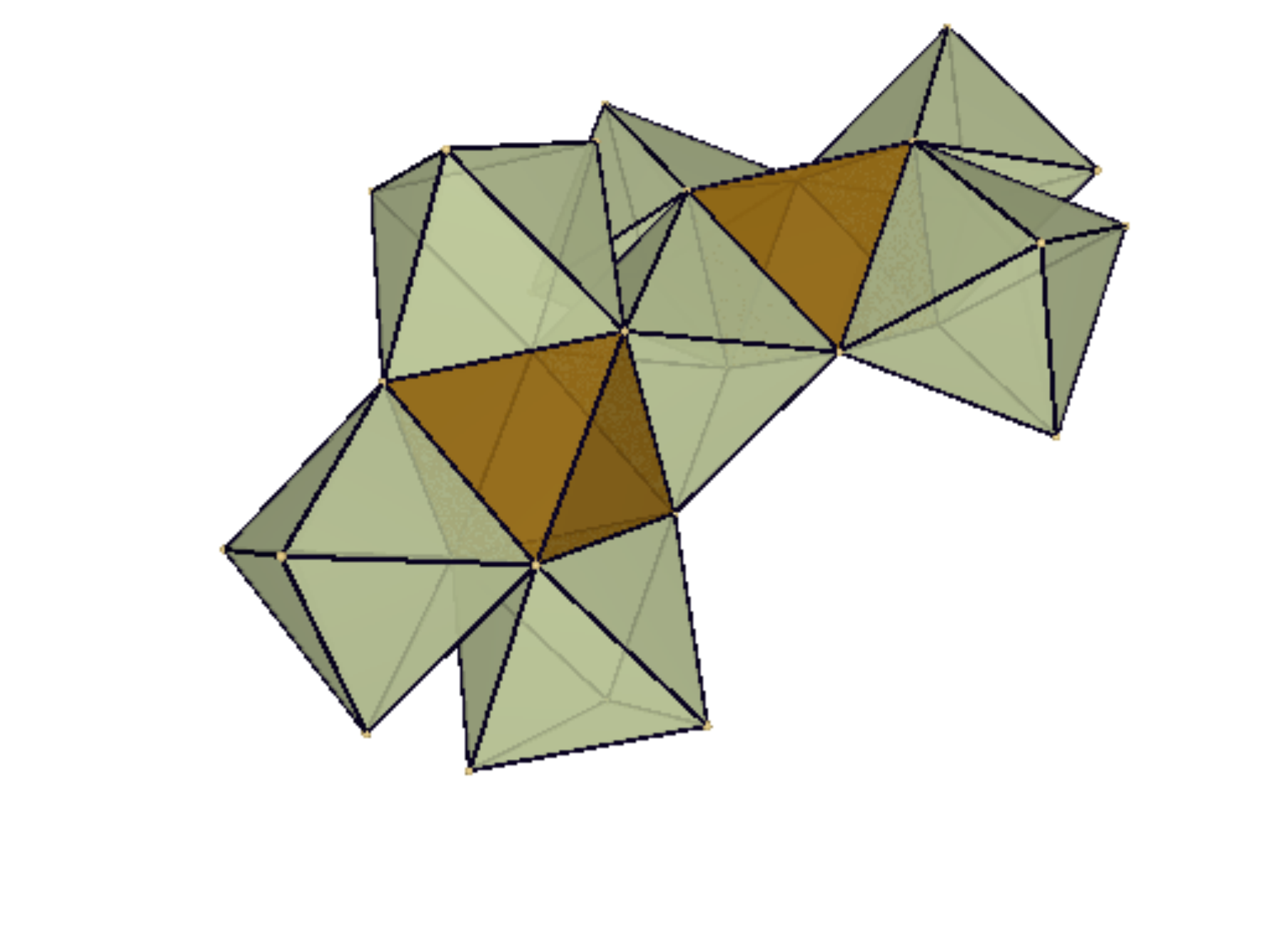} 
\end{minipage}
	\caption{Regular Archimedean decorations of $\Gamma_{3, 2, 4}'$ by cubes and octahedra, respectively}
	\label{fig: 324}
\end{figure}

\end{itemize}
\end{proof}

\begin{thm} \label{thm: finite decorations4} There exist only finitely many genus four regular triply periodic polyhedral surfaces that are regular Archimedean decorations of $\Gamma_{4, 1, 8}'$ and $\Gamma_{4, 3, 4}'.$
\end{thm}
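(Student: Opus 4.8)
The plan is to follow the same strategy as the proof of Theorem~\ref{thm: finite decorations3}, now carried out for genus four. By Proposition~\ref{prop: fin quot graph}, the symmetric quotient graphs of genus four are exactly those with $(\textrm{v}, d) \in \{(1,8),(2,5),(3,4),(6,3)\}$, namely $\Gamma_{4,1,8}'$, $\Gamma_{4,2,5}'$, $\Gamma_{4,3,4}'$, and $\Gamma_{4,6,3}'$. Since the statement singles out $\Gamma_{4,1,8}'$ and $\Gamma_{4,3,4}'$, the first task is to show that the remaining two graphs admit no regular Archimedean decoration; once attention is restricted to the two named graphs, finiteness is formal, being a direct consequence of Lemma~\ref{lem: fin rad}, so the real work lies in enumerating the candidate decorations and testing each one geometrically.

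First I would eliminate $\Gamma_{4,2,5}'$. Because the graph is symmetric, it is arc-transitive, so the point group stabilizing a $0$-simplex acts transitively on its five incident edges; this stabilizer fixes a vertex and conjugates the translation lattice to itself, hence is a crystallographic point group, and transitivity forces an orbit of size five, so $5 \mid |G|$. But by the crystallographic restriction theorem a point group preserving a rank-three lattice has only rotations of order $1,2,3,4,6$, so its order is never divisible by $5$. Thus no degree-five symmetric skeletal graph embeds triply periodically in $\R^3$, and $\Gamma_{4,2,5}'$ is ruled out. For the degree-three graph $\Gamma_{4,6,3}'$ I would invoke the same obstruction already recorded for $\Gamma_{3,4,3}'$ in the proof of Theorem~\ref{thm: finite decorations3}: one cannot select three mutually non-adjacent faces of a Platonic or Archimedean solid in a symmetric fashion yielding a regular decoration, so this case contributes nothing.

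Next I would run the case analysis on the two surviving graphs exactly as in the genus-three theorem. For $\Gamma_{4,1,8}'$ ($\textrm{v}=1$, $d=8$), the $0$-simplex must be replaced by a centrally symmetric solid carrying eight distinguished faces that form four pairs of opposite faces under octahedral symmetry, so that the four associated translations generate a rank-three lattice; the candidates are the octahedron, cuboctahedron, truncated cube, truncated octahedron, rhombicuboctahedron, and truncated cuboctahedron, while the cube has too few faces and the chiral snub cube is excluded exactly as in Theorem~\ref{thm: finite decorations3}. For each I would read off the leftover faces and decorate the $1$-simplices accordingly --- triangular anti-prisms or empty solids when the gluing faces are triangles, square prisms or empty solids when they are squares, and empty solids when the gluing faces are hexagons or octagons --- retaining only those whose boundary is tiled by a single regular polygon with uniform valency. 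For $\Gamma_{4,3,4}'$ ($\textrm{v}=3$, $d=4$) the two modes of choosing four faces from the genus-three analysis persist: a tetrahedrally symmetric choice (tetrahedron, octahedron, icosahedron, truncated tetrahedron) and a choice of two opposite pairs (cube, octahedron), now distributed over three $0$-simplices and possibly mixing solid types. In both cases the resulting list of combinations is finite, as guaranteed by Lemma~\ref{lem: fin rad}.

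The hard part will be the geometric verification rather than the bookkeeping. For every candidate I must confirm that the decoration has a connected boundary tiled by one type of regular $p$-gon with the same number $q$ of faces meeting at each vertex, and that it embeds in $\R^3$ without self-intersection while remaining invariant under a rank-three lattice. As in the genus-three case, several combinatorially admissible candidates are expected to fail one of these tests --- producing a disconnected boundary or a surface that is merely immersed rather than embedded, in analogy with the rejected cuboctahedron, truncated-cube, and hexagonal-anti-prism decorations of $\Gamma_{3,2,4}'$ and $\Gamma_{3,1,6}'$. Establishing embeddedness and exact regularity for the surviving cases, and disqualifying the rest, is where the substance of the argument lies; the finiteness assertion of the theorem then follows immediately.
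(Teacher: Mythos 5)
Your overall strategy matches the paper's: reduce via Proposition~\ref{prop: fin quot graph} to the four symmetric quotient graphs of genus four, discard $\Gamma_{4,2,5}'$ and $\Gamma_{4,6,3}'$, then enumerate solid replacements for the two remaining graphs and test each candidate. One step is genuinely better than the paper's: your elimination of $\Gamma_{4,2,5}'$ via the crystallographic restriction theorem (a vertex stabilizer acting transitively on five incident edges would force $5$ to divide the order of a point group preserving a rank-three lattice) is a clean, rigorous obstruction, whereas the paper only asserts in one line that five faces of a Platonic or Archimedean solid cannot be chosen ``in a symmetric way.''

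There is, however, a concrete gap in your enumeration for $\Gamma_{4,1,8}'$. By requiring the eight distinguished faces to form four opposite pairs \emph{under octahedral symmetry}, you exclude the icosahedron from your candidate list --- but the icosahedron is one of the paper's cases and yields an actual decoration, namely an icosahedron with eight triangular anti-prisms. The icosahedron is centrally symmetric, and eight of its twenty faces can be chosen to face the vertices of a circumscribed cube; that arrangement has only pyritohedral symmetry (order $24$), not full octahedral symmetry, yet it is symmetric enough to produce a triply periodic regular Archimedean decoration --- exactly parallel to the genus-three case, where the paper obtains two icosahedra with four triangular anti-prisms from a tetrahedrally symmetric choice of faces. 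So your symmetry hypothesis is too strong, and the resulting classification would be missing a surface. (Your extra candidate, the truncated cuboctahedron, is harmless: its leftover squares and octagons prevent a regular boundary, so it dies in the testing phase.) A second, lesser issue is that you explicitly defer the case-by-case geometric verification of connectedness, embeddedness, and regularity; that verification is where the paper's proof does its actual work, so as written your text is a blueprint for the proof rather than the proof itself.
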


\begin{proof} The proof of this theorem will follow the proof of Theorem \ref{thm: finite decorations3}. Proposition \ref{prop: fin quot graph} yields symmetric quotient graphs $\Gamma_{4, 1, 8}',$ $\Gamma_{4, 3, 4}',$ $\Gamma_{4, 2, 5},$and $\Gamma_{4, 6, 3}',$ but we will will not consider the latter two as we cannot choose five faces (or three, respectively) from a Platonic or Achimedean solid in a symmetric way in $\R^3.$\\

First we look at $\Gamma_{4, 1, 8}'.$ Since $d = 8,$ we seek solids that have eight faces placed in a symmetric way in $\R^3.$ These solids are an octahedron, icosahedron, cuboctahedron, truncated cube, truncated octahedron, and a rhombicuboctahedron. We seek four pairs of opposite $r$-gonal faces. \begin{itemize} \item We choose eight faces from an octahedron (or icosahedron) so that when an octahedron (or icosahedron) is inscribed in a regular cube, the eight faces ``face'' the vertices of the cube. This yields \textbf{an octahedron with eight triangular anti-prisms} and \textbf{an icosahedron with eight triangular anti-prisms.} \item For the cuboctahedron, rhombicuboctahedron (eight triangles and eighteen squares), and truncated cube, we choose eight faces as four pairs of opposite triangular faces. These faces are placed as bases of an anti-prism involving a $\frac{\pi}{3}$-rotation. As there is only one 0-simplex in this quotient graph, we cannot replace the 1-simplices with prisms nor empty solids. On the other hand, the remaining faces are not triangular faces, which implies that we cannot replace the 1-simplices with anti-prisms either. \item After choosing four pairs of opposite faces from a truncated octahedron (six squares and eight hexagons), we are left with parallel square faces, hence we can replace the 1-simplices with hexagonal prisms or empty solids. Replacing the 1-simplices with empty solids yields a disconnected boundary surface, however \textbf{a truncated octahedron with hexagonal prisms} (Figure \ref{fig: truncated octa8_poly}) yields a triply periodic polyhedral surface. 

\begin{figure}[htbp] 
   \centering
	\includegraphics[width=3in]{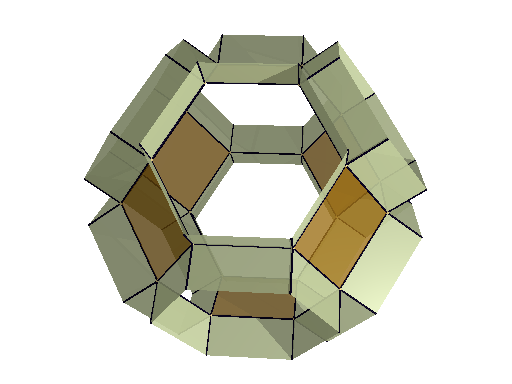}
	\caption{Fundamental piece of the Truncated Octa-8 surface}
	\label{fig: truncated octa8_poly}
\end{figure}
\end{itemize}

Lastly, we look at $\Gamma_{4, 3, 4}'.$ We mimic the argument from the decorations of $\Gamma_{3, 2, 4}'$ from the previous theorem. Then, \textbf{three cubes with four square prisms} and \textbf{three octahedra with four triangular anti-prisms} yield triply periodic polyhedral surfaces (Figure \ref{fig: 434}).

\begin{figure}[htbp] 
\centering
\begin{minipage}{.5\textwidth}
	\centering
	\includegraphics[width=3in]{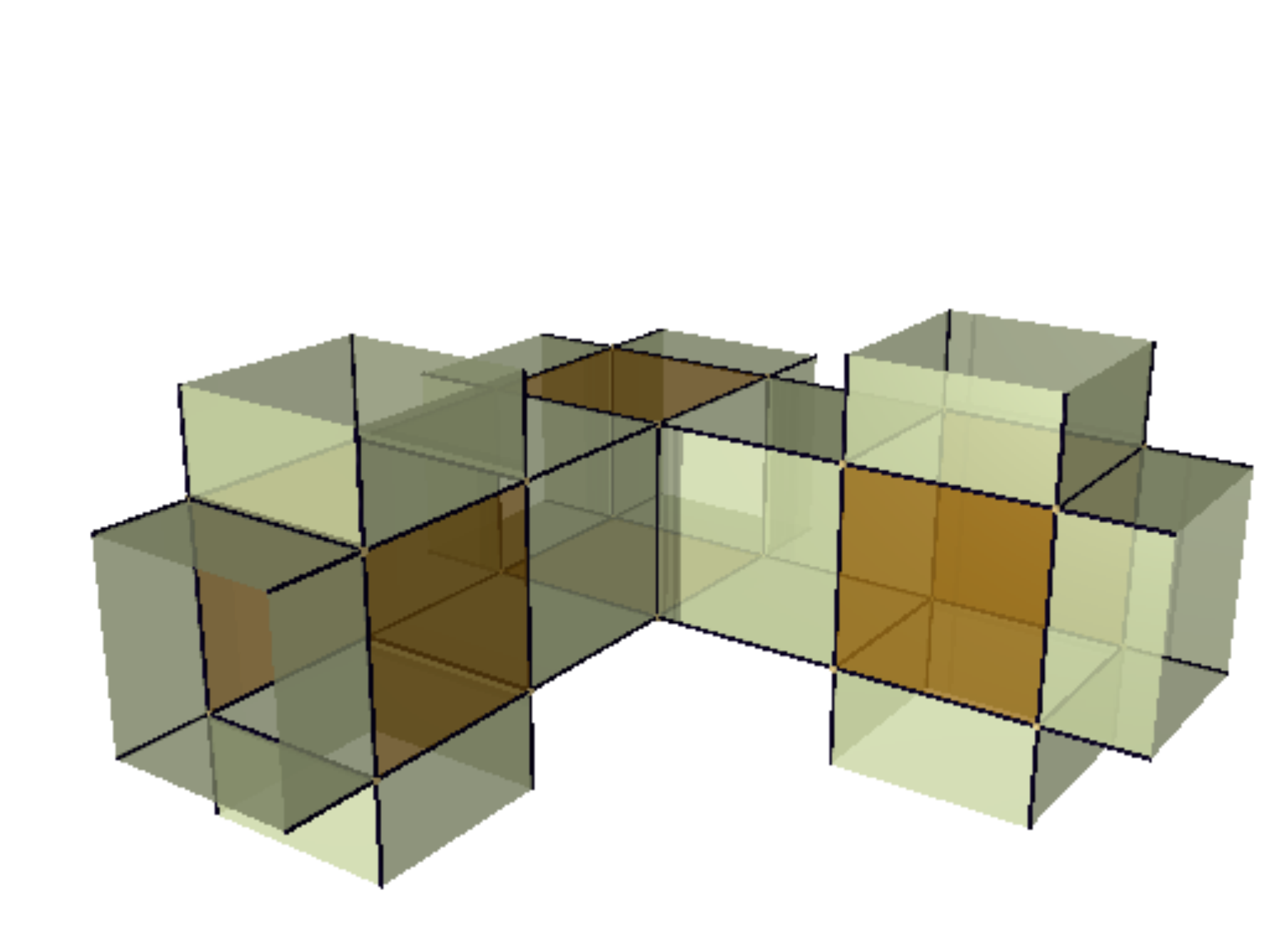} 
\end{minipage}%
\begin{minipage}{.5\textwidth}
	\centering
	\includegraphics[width=3in]{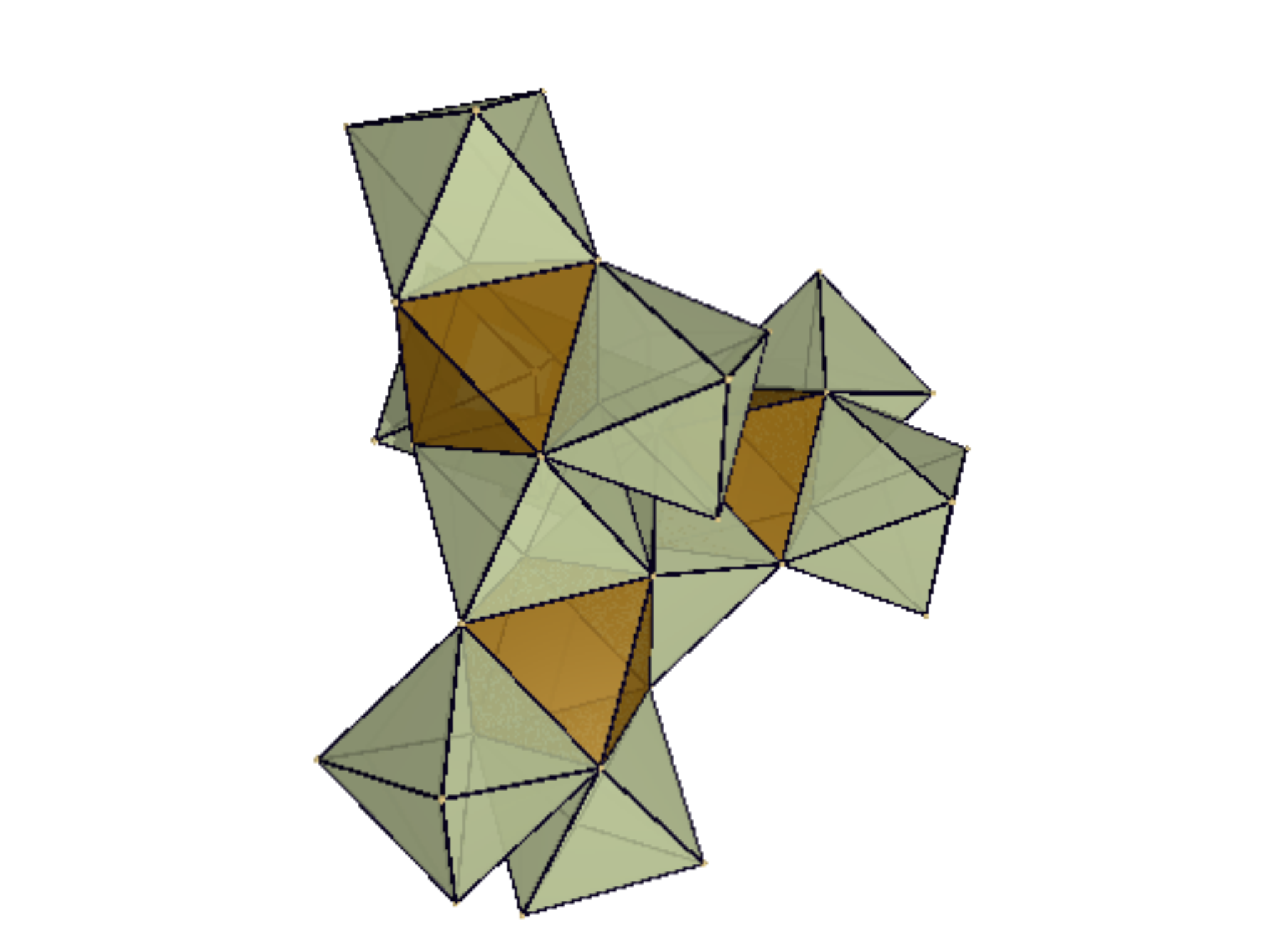} 
\end{minipage}
	\caption{Regular Archimedean decorations of $\Gamma_{4, 3, 4}'$ by cubes and octahedra, respectively}
	\label{fig: 434}
\end{figure}

\end{proof}

\section[Conformal Structures of Regular Triply Periodic Polyhedral Surfaces]{Conformal Structures of Regular Triply Periodic Polyhedral Surfaces as Cyclically Branched Covers Over Spheres}
\label{ch5 main result}
In this section, we will study the conformal structures of the triply periodic polyhedral surfaces we found in Theorem~\ref{thm: finite decorations3} and Theorem~\ref{thm: finite decorations4}. Since the surfaces are triply periodic, we look at their compact quotients which carry the same conformal type of the polyhedral surface. Since the polyhedral surfaces are regular (Definition~\ref{defn: reg}), they carry many symmetries on the underlying surfaces that are intrinsic to the surface and independent of the embedding in $\R^3.$ To identify the conformal structure of a surface, we need a large enough symmetry group so that the quotient is conformally rigid. The simplest case is where such group is a cyclic group. We are particularly interested in surfaces whose quotient via a cyclic group is a sphere. \\

With tools developed from Section~\ref{ch3 cyc br cov}, we will determine the conformal structure on the underlying surfaces that are induced from the cone metrics. In Subsection~\ref{sec: three mu}, we look at hyperbolic structures on the Mucube, Muoctahedron, and Mutetrahedron. We will show that all three surfaces are biholomorphic to each other by analysing their cone metrics. We will show that the surfaces have conformally equivalent descriptions of the genus three Schwarz minimal P- and D-surfaces. In Subsection~\ref{sec: (1,4,7)}, we will look at a regular Archimedean decoration of $\Gamma_{4, 1, 8}'$ by an octahedron and eight triangular anti-prisms. Its cone metric shows that the compact quotient is a twelvefold cyclically branched cover over a thrice punctured sphere. We will show that its conformal structure is equivalent to that of the genus four Schoen's minimal I-WP surface. In Subsection~\ref{sec: (1,2,4,3)}, we will look at another example of a regular Archimedean decoration of $\Gamma_{4, 1, 8}'$ by a truncated octahedron and hexagonal prisms. We show that the compact quotient is a fivefold cyclically branched cover over a 4-punctured sphere. We also show that the surface has the same conformal description as Kepler's small stellated dodecahedron and Bring's curve \cite{weber2005kepler}. In the last section, we will revisit the Octa-4 surface. As we already determined the conformal structure throughout Section~\ref{ch2 octa-4} and Section~\ref{ch3 cyc br cov}, we will take an excursion from the main discussion. As pointed out earlier, the surface cannot be conformally equivalent to any minimal surface. However, we find a harmonic map on the surface that has the same period matrix as the Octa-4 surface. In other words, we find a smooth surface that has the same conformal type as the Octa-4 surface. 

\subsection{Mucube, Muoctahedron, and Mutetrahedron}
\label{sec: three mu}
In this section, we will study the three regular triply periodic polyhedral surfaces that appear in \cite{coxeter1937regular}. First we will look at their hyperbolic structures and show that all three surfaces have the same hyperbolic description. Due to various rotational symmetries on the underlying surface, we will show that the surface has multiple descriptions as a cyclically branched covering over a punctured sphere. Then, we will find the conformal structure that is induced from the cone metrics. With admissible cone metrics, we find a basis of holomorphic 1-forms on the surface and find an algebraic description of the surface. We will locate all Weierstrass points on the underlying surface as well as on the polyhedral surfaces. Lastly, we will show that the surfaces can be immersed into $\R^3$ as triply periodic minimal surfaces.\\

Here we will find the hyperbolic tessellation for each of the Mucube, Muoctahedron, and Mutetrahedron. We will focus on the symmetries on the surfaces and show that all three have the same hyperbolic description. We begin with the Mucube which is denoted by $\{4, 6| 4\}$ in Schl{\"a}fli symbols. Its fundamental piece (quotient via the lattice of translations) is tiled by twelve squares. We map a Euclidean square to a hyperbolic $\frac{\pi}{3}$-square and by Schwarz reflection principle obtain the following hyperbolic tessellation (Figure~\ref{fig: 464}). As we used Petrie polygons (Definition~\ref{defn: petrie}) to find identification of edges in Figure~\ref{hyperbolic_tiling} for the Octa-4 surface, we look at polyhedral geodesics for the Mucube. Consider any surface tiled by regular $p$-gons where $p$ is even, either the polyhedral surface embedded in Euclidean space or the hyperbolic setting. We define a polyhedral geodesic as a geodesic that goes through opposite sides of a $p$-gon. Then, all polyhedral geodesics on the Mucube, in either Euclidean or hyperbolic tiling, are closed after going through four squares.

\begin{figure}[htbp] 
   \centering
   \includegraphics[width=2in]{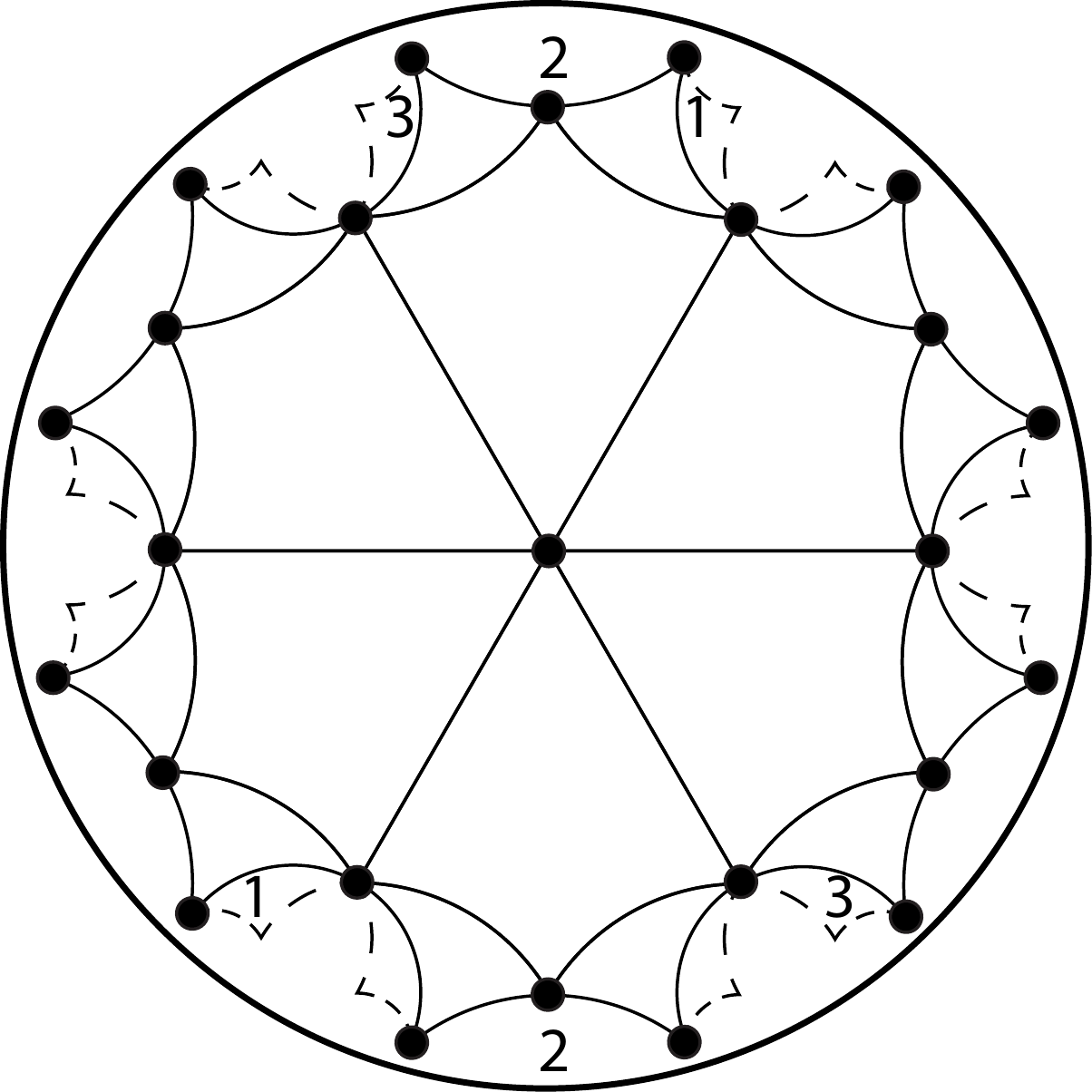} 
	\caption{Hyperbolic description of the Mucube}
	\label{fig: 464}
\end{figure}

Given a polygonal tiling, its dual tiling is formed by taking the center of each polygon as a vertex and joining the centers of polygons that share an edge. As the Muoctahedron is denoted by $\{6, 4| 4\},$ the hexagonal tiling of the Muoctahedron is dual to the square tiling of the Mucube. We map the Euclidean hexagons to hyperbolic $\frac{\pi}{2}$-hexagons. Figure~\ref{fig: 644} shows the hyperbolic description of the Muoctahedron, where the identification of edges is identical to that of the Mucube.

\begin{figure}[htbp] 
   \centering
   \includegraphics[width=2in]{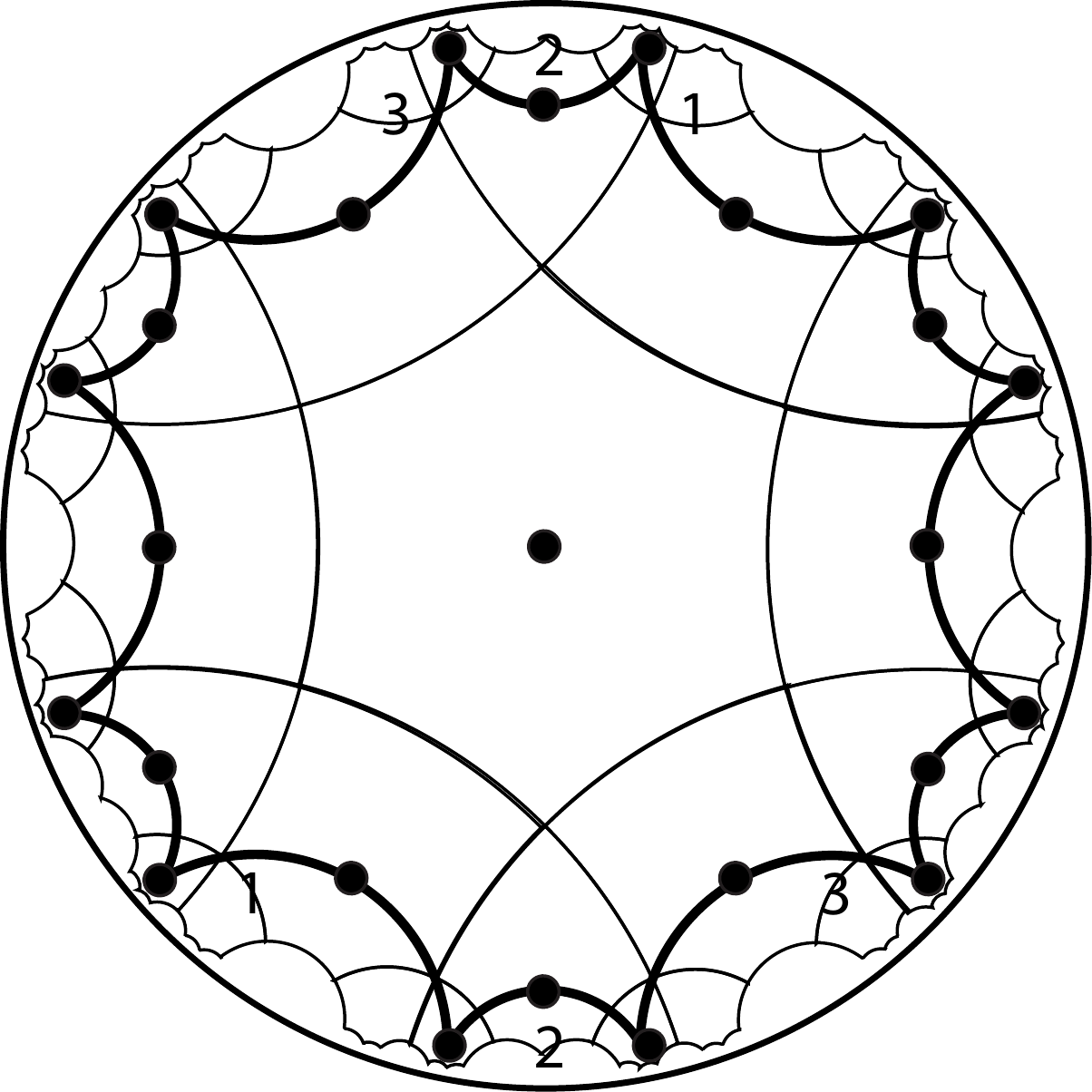} 
	\caption{Hyperbolic description of the Muoctahedron}
	\label{fig: 644}
\end{figure}

The Mutetrahedron, denoted by $\{6, 6| 3\},$ is also tiled by hexagons. The fundamental piece consists of four hexagons that are twice the size of the hexagons on the Muoctahedron. The identification of edges suggests that this has the same hyperbolic description as the two surfaces above. 

\begin{figure}[htbp] 
   \centering
   \includegraphics[width=2in]{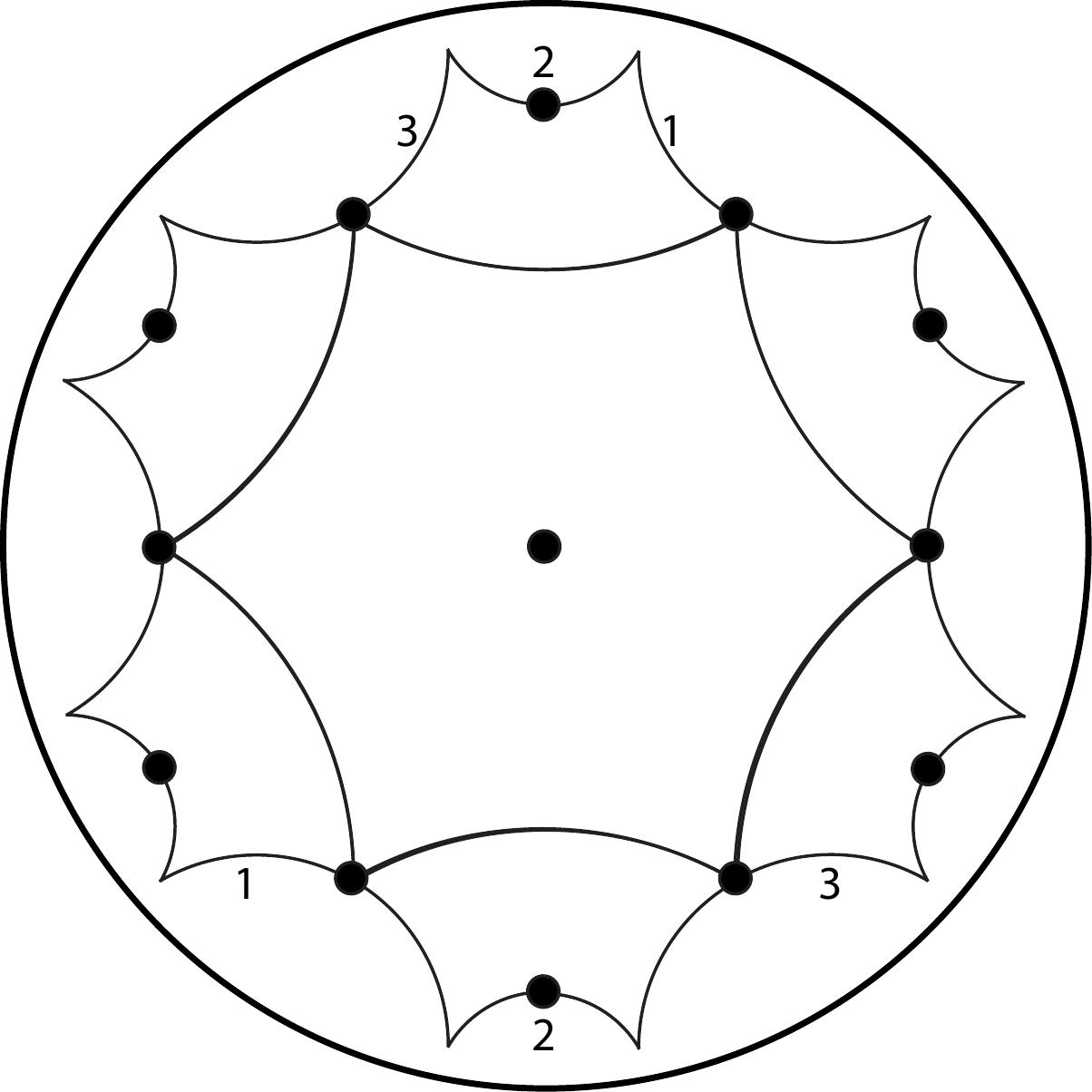} 
	\caption{Hyperbolic description of the Mutetrahedron}
	\label{fig: 663}
\end{figure}

The previous figures suggest that the hyperbolic geodesics are preserved under an order six rotational symmetry. We take the quotient of the hyperbolic fundamental domain by the cyclic group of order six. Then, the quotient is a doubled hyperbolic rhombus of angles $\left( \frac{\pi}{6}, \frac{\pi}{2}, \frac{\pi}{6}, \frac{\pi}{2} \right),$ which is topologically a sphere. We label the center of the tessellation as $p_1,$ and using the same argument as in Theorem~\ref{thm: 125}, we have $d_i = 1.$ We label the remaining vertices of the quotient rhombus as $p_2,$ $p_3,$ and $p_4$ counterclockwise. There exist one $\widetilde{p_3}$ and three of each $\widetilde{p_2}$ and $\widetilde{p_4},$ hence the branching indices $(d_i)$ satisfy $\gcd(6, d_3) = 1$ and $\gcd(6, d_2) = \gcd(6, d_4) = 3.$ Therefore, the underlying surface of the Mucube (equivalently the Muoctahedron or Mutetrahedron) is a sixfold cyclically branched cover over a 4-punctured sphere with branching indices $(d_i) = (1, 3, 5, 3).$ The admissible cone metrics $\frac{2 \pi }{6} (1, 3, 5, 3), \frac{2 \pi }{6} (3, 3, 3, 3),$ and $\frac{2 \pi }{6} (5, 3, 1, 3)$ arise from the multipliers from which we get a basis of holomorphic 1-forms $\{\omega_i\}$ where the divisors are $(\omega_1) = 4 \widetilde{p_3}, (\omega_2) = 2 \widetilde{p_1} + 2 \widetilde{p_3},$ and $(\omega_3) = 4 \widetilde{p_1}.$ Given an explicit basis of 1-forms, we have an algebraic equation $\omega_1 \omega_3 = \omega_2^2$ that describes the abstract surface. By Corollary 1.5 of Chapter 3 from \cite{miranda1995algebraic}, a nonsingular conic (curve of degree two) is isomorphic to $\mathbb{P}^1.$ This shows that the surface is hyperelliptic.\\

We will show that the surface can also be described as a fourfold cyclically branched covering over a sphere. The hyperbolic hexadecagon in Figure~\ref{fig: 464_} represents the fundamental piece of the Mucube where a square face is at the center of the tessellation. We find the identification of the edges using polyhedral geodesics and observe that an order four rotational symmetry preserves the geodesics. We take the quotient of the hyperbolic fundamental domain by the cyclic group of order four. This yields a doubled hyperbolic rhombus of angles $\left( \frac{\pi}{4}, \frac{3 \pi}{4}, \frac{3 \pi}{4}, \frac{\pi}{4} \right)$ which is topologically a sphere. We label the vertices of the rhombus as $p_i$ with $p_1$ at the center of the tessellation so that $d_1 = 1.$ The cone angles at the cone points imply that the abstract surface is a fourfold cyclically branched cover over a 4-punctured sphere with branching indices $(d_1, d_2, d_3, d_4)$ such that $\gcd(4, d_i) = 1$ for each $i$ and that $\sum d_i \equiv 0 \pmod 4.$ Then the branching indices must be either $(1, 1, 1, 1)$ or $(1, 1, 3, 3).$ Since we already know that the surface is hyperelliptic, we claim that $(1, 1, 1, 1)$ is not of our desire by showing that such covering is not hyperelliptic. If $(d_1, d_2, d_3, d_4) = (1, 1, 1, 1),$ then we get a basis of holomorphic 1-forms with $(\omega_1) = 4 \widetilde{p_2},$ $(\omega_2) = \widetilde{p_1} + \widetilde{p_2} + \widetilde{p_3} + \widetilde{p_4},$ and $(\omega_3) = 4 \widetilde{p_1}.$ We refer to the discussion in Subsection~\ref{sec: wronski}. Since $\textrm{wt}_1 = 2,$ the covering cannot be hyperelliptic. 

\begin{figure}[htbp] 
   \centering
   \includegraphics[width=2in]{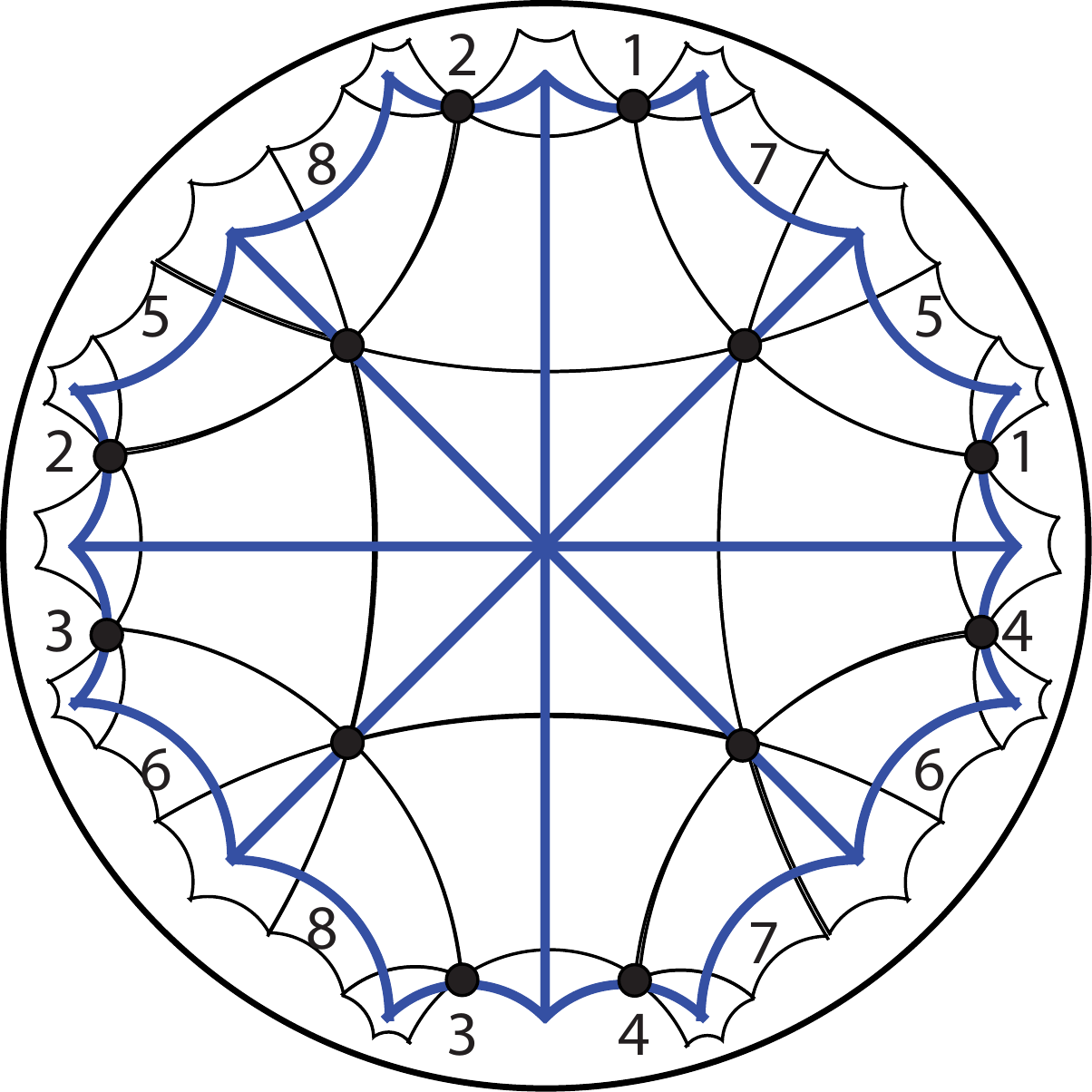} 
	\caption{Another hyperbolic description of the Mucube}
	\label{fig: 464_}
\end{figure}

Note that there are no larger cyclic groups that preserve the hyperbolic geodesics of the surfaces. We have shown the following proposition.

\begin{prop} The compact quotients of the Mucube, Muoctahedron, and Mutetrahedron are biholomorphic to each other. Moreover, the underlying surface has at least two distinct descriptions as cyclically branched coverings over a 4-punctured sphere. The coverings are defined as a sixfold cover defined by branching indices $(1, 3, 5, 3)$ and as a fourfold cover defined by branching indices $(1, 1, 3, 3).$
\end{prop}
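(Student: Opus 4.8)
The plan is to assemble the proposition from three separate pieces of information, each of which has essentially already been established in the surrounding discussion, and then argue that these pieces combine to give the full claim. First, I would establish the biholomorphism of the three compact quotients. The preceding figures (Figures~\ref{fig: 464}, \ref{fig: 644}, and \ref{fig: 663}) exhibit the Mucube, Muoctahedron, and Mutetrahedron as surfaces tiled by regular hyperbolic polygons under the Schwarz reflection principle, and in each case the identification of edges is the same. Since the Muoctahedron's hexagonal tiling is dual to the Mucube's square tiling, and the Mutetrahedron is tiled by hexagons twice the size of the Muoctahedron's with matching edge identifications, all three hyperbolic tessellations determine the same Fuchsian group up to conjugation. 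I would make explicit that ``same identification of edges'' means the quotient surfaces are related by an isometry of the hyperbolic disk preserving the tiling, hence they are conformally (indeed biholomorphically) the same Riemann surface.

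Next I would verify the sixfold covering description. The order-six rotational symmetry preserving the hyperbolic geodesics yields a quotient which is a doubled hyperbolic rhombus with angles $\left(\frac{\pi}{6}, \frac{\pi}{2}, \frac{\pi}{6}, \frac{\pi}{2}\right)$, topologically a sphere, so the surface is a sixfold cyclically branched cover over a $4$-punctured sphere. Labeling the center of the tessellation as $p_1$ (giving $d_1 = 1$ by the same argument as in the proof of the Octa-4 identification, Theorem~\ref{thm: 125}) and reading off the preimage counts --- one $\widetilde{p_3}$ and three each of $\widetilde{p_2}$ and $\widetilde{p_4}$ --- forces $\gcd(6, d_3) = 1$ and $\gcd(6, d_2) = \gcd(6, d_4) = 3$, which together with the closure condition $\sum d_i \equiv 0 \pmod 6$ from Theorem~\ref{thm: wdefd cyc br cov} pins down $(d_i) = (1, 3, 5, 3)$. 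Using Proposition~\ref{prop: g(X)} one checks that $\textrm{genus}(X) = 3$, consistent with the known genus.

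For the fourfold description I would run the analogous analysis using the order-four rotational symmetry visible in Figure~\ref{fig: 464_}, obtaining a doubled rhombus with angles $\left(\frac{\pi}{4}, \frac{3\pi}{4}, \frac{3\pi}{4}, \frac{\pi}{4}\right)$ and hence a fourfold cover over a $4$-punctured sphere. The constraints $\gcd(4, d_i) = 1$ for all $i$ and $\sum d_i \equiv 0 \pmod 4$ leave only $(1,1,1,1)$ and $(1,1,3,3)$ as candidates. The crucial step is to eliminate $(1,1,1,1)$: here I would invoke the Wronski-metric computation from Subsection~\ref{sec: wronski}, which for branching indices $(1,1,1,1)$ gives a basis of holomorphic $1$-forms with divisors $(\omega_1) = 4\widetilde{p_2}$, $(\omega_2) = \widetilde{p_1} + \widetilde{p_2} + \widetilde{p_3} + \widetilde{p_4}$, $(\omega_3) = 4\widetilde{p_1}$, and a Weierstrass weight $\textrm{wt}_1 = 2$ at $\widetilde{p_1}$, showing that cover is \emph{not} hyperelliptic. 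Since the sixfold description already shows our surface \emph{is} hyperelliptic --- the algebraic equation $\omega_1 \omega_3 = \omega_2^2$ is a nonsingular conic, isomorphic to $\mathbb{P}^1$ by Corollary~1.5 of Chapter~3 of \cite{miranda1995algebraic}, so the $2$-to-$1$ quotient map realizes the hyperelliptic involution --- the $(1,1,1,1)$ cover cannot be our surface, leaving $(1,1,3,3)$.

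The main obstacle I anticipate is the first step: rigorously justifying that ``identical edge identifications'' among the three tessellations yields a genuine biholomorphism rather than merely a homeomorphism. One must confirm that the Euclidean-to-hyperbolic mapping of faces is conformal on each tile (it sends regular Euclidean $p$-gons to regular hyperbolic $p$-gons and extends by Schwarz reflection), so that the three surfaces carry the same conformal structure and not just the same topological type. Once the conformality of the tessellation maps is in hand, the matching edge identifications give the biholomorphism directly, and the remaining two covering descriptions follow from the now-routine Riemann-Hurwitz and closure-condition bookkeeping.
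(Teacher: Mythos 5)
Your proposal is correct and follows essentially the same route as the paper: the biholomorphism is read off from the identical edge identifications of the three hyperbolic tessellations, the sixfold cover with indices $(1,3,5,3)$ comes from the order-six rotation and preimage counts, and the fourfold candidate $(1,1,1,1)$ is eliminated exactly as the paper does, by contrasting the hyperellipticity forced by the conic $\omega_1 \omega_3 = \omega_2^2$ with the Weierstrass weight $\textrm{wt}_1 = 2$ of the $(1,1,1,1)$ cover. Your flagged concern about upgrading ``same edge identifications'' to a genuine biholomorphism is a fair point of rigor, but the paper itself treats this step at the same level of detail you do.
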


Next we locate the Weierstrass points on the underyling surface of the Mucube, Muoctahedra, and Mutetrahedron. We view the surface as a sixfold covering defined by branching indices $(1, 3, 5, 3).$ Then $\textrm{wt}_1 = \textrm{wt}_3 = 3,$ hence $\widetilde{p_1}$ and $\widetilde{p_3}$ are Weierstrass points that are fixed by the hyperelliptic involution. In Figure~\ref{fig: 464}, the $180^{\circ}$-rotation about the center of the tessellation is an involution that fixes $\widetilde{p_1}$ and $\widetilde{p_3}.$ In fact, it fixes all $\widetilde{p_i},$ hence all $\widetilde{p_i}$ are Weierstrass points. As we found eight Weierstrass points, they are the only Weierstrass points. Furthermore, they are marked in Figure~\ref{fig: 464}, Figure~\ref{fig: 644}, and Figure~\ref{fig: 663}, corresponding to vertices on the Mucube, the centers of the hexagons on the Muoctahedron, and the vertices and centers of the hexagons on the Mutetrahedron.

Lastly, using the following theorem and definition, we show that we can immerse the underlying surface of the Mucube, Muoctahedron, and Mucube into $\R^3$ as a minimal surface. \\

\begin{thm*} [\cite{weber2001clay}] Given holomorphic 1-forms $\omega_1, \omega_2,$ and $\omega_3$ such that $\sum \omega_i^2 \equiv  0$ and $\sum |\omega_i|^2 \neq 0,$ we have $z \mapsto \textrm{Re} \int^z \left(\omega_1, \omega_2, \omega_3\right)$ that defines a conformally parametrized minimal surface in $\R^3,$ and every conformal minimal surface parametrization arises this way.
\end{thm*}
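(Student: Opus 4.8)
The plan is to prove both implications of this Weierstrass--Enneper correspondence by working in a local conformal coordinate $z = u + i v$ and writing $\omega_i = \phi_i \, dz$ for functions $\phi_i$. The whole argument rests on one algebraic identity relating $\sum \phi_i^2$ to the first fundamental form of $X := \textrm{Re} \int^z (\omega_1, \omega_2, \omega_3)$. First I would note that $X = \textrm{Re}\, F$ for the vector-valued holomorphic primitive $F$ with $F' = (\phi_1, \phi_2, \phi_3)$, so that $X_u = \textrm{Re}(\phi_1, \phi_2, \phi_3)$ and $X_v = -\textrm{Im}(\phi_1, \phi_2, \phi_3)$. Expanding $\sum \phi_i^2 = \sum (\textrm{Re}\,\phi_i + i\,\textrm{Im}\,\phi_i)^2$ and separating real and imaginary parts then yields
$$\sum_{i=1}^3 \phi_i^2 = \left( |X_u|^2 - |X_v|^2 \right) - 2 i \, (X_u \cdot X_v),$$
so the hypothesis $\sum \omega_i^2 \equiv 0$ is exactly equivalent to the conformality conditions $|X_u|^2 = |X_v|^2$ and $X_u \cdot X_v = 0$. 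Likewise $\sum |\phi_i|^2 = |X_u|^2 + |X_v|^2$, so the hypothesis $\sum |\omega_i|^2 \neq 0$ says precisely that the induced metric is nondegenerate, i.e. that $X$ is an immersion.

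For the forward direction it then remains to show that $X$ is minimal. Since each coordinate $X_i = \textrm{Re}\, F_i$ is the real part of a holomorphic function, it is harmonic, $\Delta X_i = 0$. For a conformal immersion the mean curvature vector is a nonzero multiple of the Laplacian taken in the conformal parameters, namely $\Delta X = 2 \lambda^2 \vec{H}$ with $\lambda^2 = |X_u|^2 = |X_v|^2 \neq 0$, so harmonicity forces $\vec{H} \equiv 0$ and $X$ is a conformally parametrized minimal surface. For the converse I would start from an arbitrary conformal minimal immersion $X$ and set $\phi_i := \partial X_i / \partial z$ and $\omega_i := 2 \phi_i \, dz$. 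Conformality together with minimality makes each $X_i$ harmonic, whence $\partial_{\bar z} \phi_i = \tfrac14 \Delta X_i = 0$ and each $\phi_i$ is holomorphic; the identity above read in reverse gives $\sum \omega_i^2 \equiv 0$ and $\sum |\omega_i|^2 \neq 0$; and the elementary computation $dX_i = 2\,\textrm{Re}(\phi_i\, dz)$ shows $X = \textrm{Re}\int^z (\omega_1, \omega_2, \omega_3)$ up to an additive constant, so every conformal minimal parametrization arises this way.

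I expect the main obstacle to be not either algebraic verification but the global single-valuedness of the primitive $\textrm{Re}\int^z(\omega_1, \omega_2, \omega_3)$ once the domain is not simply connected. On the compact quotient surfaces of interest the integral is only defined up to its periods $\textrm{Re}\oint_\gamma (\omega_1, \omega_2, \omega_3)$ over closed loops $\gamma$, and these must lie in the translation lattice $\Lambda \subset \R^3$ for $X$ to descend to a well-defined triply periodic immersion. I would therefore phrase the correspondence on a simply connected fundamental domain (or the universal cover), where no period obstruction occurs, and record the real-period conditions $\textrm{Re}\oint_\gamma (\omega_1, \omega_2, \omega_3) \in \Lambda$ separately as the compatibility requirement to be verified for each explicit basis of holomorphic 1-forms. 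This is the step that demands genuine geometric input rather than formal manipulation.
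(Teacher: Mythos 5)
Your proposal is correct. Note, however, that the paper itself offers no proof of this statement: it is quoted as a known theorem from \cite{weber2001clay} (the classical Weierstrass--Enneper representation), so there is no internal argument to compare yours against. What you have written is the standard proof, and it is sound: the identity $\sum \phi_i^2 = \left(|X_u|^2 - |X_v|^2\right) - 2i\, X_u \cdot X_v$ together with $\sum |\phi_i|^2 = |X_u|^2 + |X_v|^2$ correctly converts the two hypotheses into conformality plus regularity, harmonicity of $\mathrm{Re}\,F$ plus the formula $\Delta X = 2\lambda^2 \vec{H}$ for conformal immersions gives minimality, and the converse via $\phi_i = \partial X_i/\partial z$, $\omega_i = 2\phi_i\, dz$ is the standard reverse construction (your bookkeeping of the factor of $2$ is consistent). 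Your closing observation is also the right one to make in the context of this paper: the theorem as stated is local (or simply connected), and when it is applied to compact quotients of triply periodic surfaces the real periods $\mathrm{Re}\oint_\gamma (\omega_1, \omega_2, \omega_3)$ must lie in the translation lattice; this is precisely the geometric condition implicitly at stake when the paper matches bases of holomorphic 1-forms to the Schwarz P-, D- and Schoen I-WP surfaces in Section 5.
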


\begin{defn*} Given holomorphic 1-forms $\omega_i$ for $i = 1, 2, 3,$ the Weierstrass representation is defined by $$z \mapsto \textrm{Re} \int^z \left(\frac{1}{2} \left(\frac{1}{G} - G\right), \frac{i}{2} \left(\frac{1}{G} + G\right), 1\right) d h$$ where $G := \frac{-\omega_1 + i \omega_2}{\omega_3}$ is the Gauss map.
\end{defn*}

\begin{thm} The underlying conformal structure of Schwarz minimal P-surface is compatible with that of the Mucube and Muoctahedron. That of Schwarz minimal D-surface is compatible with the Mutetrahedron.
\end{thm}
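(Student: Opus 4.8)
The plan is to realize the common underlying surface explicitly as a minimal surface by feeding the basis of holomorphic $1$-forms into Weber's Weierstrass representation, and then to recognize the resulting Gauss map and branch data as those of the Schwarz P- and D-surfaces. Throughout I work with the genus-three hyperelliptic surface $X$ shared by the Mucube, Muoctahedron, and Mutetrahedron (by the previous proposition all three are biholomorphic), together with the basis $\{\omega_1,\omega_2,\omega_3\}$ computed above, whose divisors are $(\omega_1)=4\widetilde{p_3}$, $(\omega_2)=2\widetilde{p_1}+2\widetilde{p_3}$, $(\omega_3)=4\widetilde{p_1}$, and which satisfy the conic relation $\omega_1\omega_3=\omega_2^2$.

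The first step is to produce a conformal minimal immersion. Since $\omega_1^2+\omega_2^2+\omega_3^2$ is not visibly isotropic, I pass to the combination $\eta_1=\tfrac12(\omega_1-\omega_3)$, $\eta_2=\tfrac{i}{2}(\omega_1+\omega_3)$, $\eta_3=\omega_2$, for which the conic relation gives $\eta_1^2+\eta_2^2+\eta_3^2=-\omega_1\omega_3+\omega_2^2=0$. Moreover $\sum_i|\eta_i|^2\neq 0$ everywhere, because $\omega_1$ and $\omega_3$ have disjoint zero divisors $4\widetilde{p_3}$ and $4\widetilde{p_1}$, so $\eta_1$ and $\eta_2$ never vanish simultaneously. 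Weber's theorem then applies, and $z\mapsto \mathrm{Re}\int^{z}(\eta_1,\eta_2,\eta_3)$ is a conformal minimal immersion whose Gauss map is $G=\dfrac{-\eta_1+i\eta_2}{\eta_3}=\dfrac{-\omega_1}{\omega_2}$, with divisor $2\widetilde{p_3}-2\widetilde{p_1}$. Thus $G$ has degree two, matching the fact that a genus-three triply periodic minimal surface has Gauss map of degree $g-1=2$, and $G$ is forced to be the hyperelliptic projection of $X$.

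The second step identifies the branch locus of $G$, which is exactly the set of eight Weierstrass points $\widetilde{p_i}$ located earlier. Here $G$ sends $\widetilde{p_3}$ to $0$ and $\widetilde{p_1}$ to $\infty$, while the three copies of $\widetilde{p_2}$ and the three copies of $\widetilde{p_4}$ map to two triangles of points at reciprocal radii, offset by the order-three rotation. I would use the octahedral automorphism group of $X$, acting on $\CP^1$ through $G$, to show that this configuration is precisely the eight vertices of a cube inscribed in $\CP^1$ and viewed along a body diagonal (two antipodal vertices at the poles, the remaining six forming an antiprism of two triangles). This pins $X$ down as the hyperelliptic curve branched over the cube vertices, i.e.\ the curve $w^2=z^8\pm 14z^4+1$, which is exactly the underlying Riemann surface of the Schwarz P- and D-surfaces. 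Combined with the biholomorphism of the three Mu-surfaces and with the classical fact that P and D are conjugate minimal surfaces (hence carry one and the same conformal structure), this establishes the conformal equivalence asserted in the theorem.

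Finally, to attach the names P and D correctly I pass to the associate family $z\mapsto \mathrm{Re}\int^{z} e^{i\theta}(\eta_1,\eta_2,\eta_3)$, of which the P- and D-surfaces are the conjugate members $\theta=0$ and $\theta=\tfrac{\pi}{2}$. I would determine the correct $\theta$ for each polyhedron by checking that the resulting periods span the translation lattice realizing the Mucube/Muoctahedron (respectively the Mutetrahedron), reducing the period integrals to the few generators produced by the reflection and rotation symmetries of $X$. \emph{The main obstacle is this last step}: the conformal identification is essentially forced by the octahedral symmetry once the Gauss map is in hand, but verifying that the immersion closes up on the correct lattice and matching the associate-family parameter $\theta$ to each specific periodic polyhedron requires an honest period computation rather than a pure symmetry argument.
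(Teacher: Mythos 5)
Your construction is exactly the paper's proof: your $\eta_i$ coincide (up to sign) with the paper's $\varphi_i$ once one takes $dh=\omega_2$, the isotropy check $\eta_1^2+\eta_2^2+\eta_3^2=-\omega_1\omega_3+\omega_2^2=0$ is the same use of the conic relation, and Weber's Weierstrass representation theorem is invoked identically. The additional material you supply --- identifying the branch points of the degree-two Gauss map with the eight Weierstrass points in a cube configuration, and flagging the period/associate-family computation needed to attach the names P and D to the specific polyhedra --- goes beyond the paper, whose proof simply asserts that final identification, so the ``obstacle'' you honestly flag is precisely the step the published proof also leaves without detail.
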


\begin{proof} 
Given the basis of holomorphic 1-forms $\{\omega_i\}$ from the previous computation, define $\varphi_i$ by $$\varphi_1 = \frac{-\omega_1 + \omega_3}{2 \omega_2} d h, \qquad \varphi_2 = i \frac{\omega_1 + \omega_3}{2 \omega_2} d h, \qquad \varphi_3 = d h$$ so that $\varphi_i$ satisfy $\sum \varphi_i^2 \equiv 0$ and $\sum |\varphi_i|^2 \neq 0.$ In other words, the following Weierstrass representation $$\textrm{Re} \int^z \left(\frac{1}{2} \left(\frac{1}{G} - G\right), \frac{i}{2} \left(\frac{1}{G} + G\right), 1\right) d h$$ is a conformal parametrization of a minimal surface in $\R^3$ where $G$ is the Gauss map. Schwarz minimal P-surface has the Mucube and Muoctahedron as the underlying conformal structure and Schwarz minimal D-surface has the Mutetrahedron as the underlying conformal structure. \end{proof}

\subsection{Octa-8}
\label{sec: (1,4,7)}
In this section, we look at a genus four triply periodic polyhedral surface that arises as a regular Archimedean decoration of $\Gamma_{4, 1, 8}'.$ The graph is decorated by an octahedron and eight triangular anti-prisms so we call it the Octa-8 surface. We will show that its compact quotient is a twelvefold cyclically branched covering over a thrice punctured sphere. We will find an explicit basis of holomorphic 1-forms on the surface and show that the surface is conformally equivalent to the genus four Schoen's minimal I-WP surface. \\

The topological construction of the Octa-8 surface is similar to that of the Octa-4 surface. Instead of attaching four Type A octahedra on a Type B octahedron, we attach eight. The fundamental piece of the Octa-8 surface is tiled by 24 triangles where every vertex is twelve-valent. We map a flat triangle to a hyperbolic $\frac{\pi}{6}$-triangle, then tile the hyperbolic disk using Schwarz reflection principle. Figure~\ref{fig: octa8} shows the hyperbolic description along with the identification of edges. Its hyperbolic description displays an order-twelve rotational symmetry which preserves the hyperbolic geodesics. We take the quotient of the hyperbolic fundamental piece by the cyclic group of order twelve, which results in a doubled triangle. We label the center of the tessellation by $p_1,$ thus $d_1 = 1.$ The identification of edges suggests that $\gcd(12, d_3) = 1$ and $(12, d_2) = 4.$ Therefore, the covering is defined by branching indices $(d_1, d_2, d_3)  = (1, 4, 7).$ 

\begin{figure}[htbp] 
   \centering
   \includegraphics[width=2in]{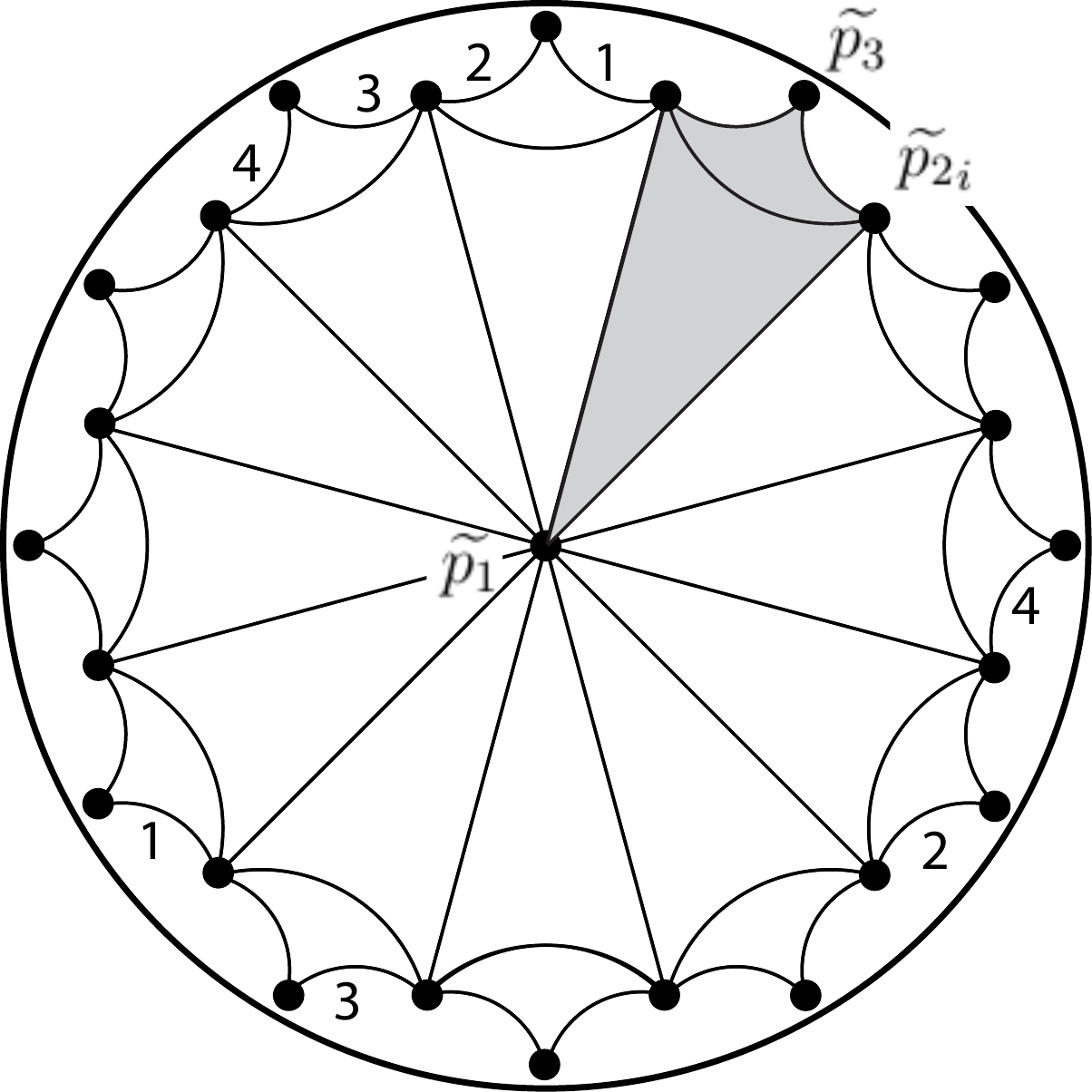} 
	\caption{Hyperbolic description of the Octa-8 surface}
	\label{fig: octa8}
\end{figure}

Given the branching indices, the admissible cone metrics arise from $(1, 4, 7),$ $(2, 8, 2),$ $(4, 4, 4),$ and $(7, 4, 1),$ which yield a basis of holomorphic 1-forms $\{\omega_i\}$ with the following divisors $$\begin{array}{ccccccc}(\omega_1) = & & & & & & 6 \widetilde{p_3}\\
(\omega_2) = & \widetilde{p_1} & + \widetilde{p_2}_1 & + \widetilde{p_2}_2 & + \widetilde{p_2}_3 & + \widetilde{p_2}_4 & + \widetilde{p_3}\\
(\omega_3) = & 3 \widetilde{p_1} & & & & & + 3 \widetilde{p_3}\\
(\omega_4) = & 6 \widetilde{p_1}. & & & & &
\end{array}$$ 

By Weierstrass gap theorem, we have $\textrm{wt}(\widetilde{p_1}) = \textrm{wt}(\widetilde{p_3}) = 4.$ In other words, the surface is not hyperelliptic. Since the Octa-8 surface is a non-hyperelliptic genus four surface, its canonical curve embeds in $\mathbb{P}^3.$ We refer to the following proposition (Proposition 2.6, Chapter 7 from \cite{miranda1995algebraic}) that shows that all genus four non-hyperelliptic curves are in the complete intersection of a cubic and a quadric polynomial. 

\begin{prop*} Let $X$ be a non-hyperelliptic algebraic curve of $g = 4.$ Then $X$ embeds in $\mathbb{P}^3$ as a smooth curve of degree six defined by the vanishing of a quadric and a cubic polynomial.
\end{prop*}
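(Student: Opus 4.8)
The plan is to realize $X$ through its canonical embedding and then cut it out as the intersection of a quadric and a cubic, locating both surfaces by Riemann--Roch dimension counts. First I would use the hypothesis that $X$ is non-hyperelliptic of genus four: the canonical map $\phi_K \colon X \to \mathbb{P}^{g-1} = \mathbb{P}^3$ is then a closed embedding whose image is nondegenerate (it spans all of $\mathbb{P}^3$) and has degree $\deg K = 2g - 2 = 6$. I will identify $X$ with this smooth canonical curve, so that $\mathcal{O}_X(1) = K$ and $\mathcal{O}_X(m) = mK$; smoothness of the image is automatic since $\phi_K$ is an embedding of the smooth curve $X$.

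Next I would produce the quadric by comparing two vector spaces. The homogeneous quadrics on $\mathbb{P}^3$ form a space of dimension $\binom{5}{2} = 10$, and the restriction map carries it into $H^0(X, 2K)$. Since $\deg(2K) = 12 > 2g - 2$, Riemann--Roch gives $h^0(2K) = 12 - 4 + 1 = 9$, so the kernel of restriction has dimension at least $10 - 9 = 1$, yielding a quadric $Q$ vanishing on $X$. I would then argue that $Q$ is irreducible: if $\operatorname{rank} Q \leq 2$ it would be a union of hyperplanes, and the irreducible curve $X$ would lie in one of them, contradicting nondegeneracy; hence $\operatorname{rank} Q \geq 3$ and $Q$ is irreducible.

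The cubic is found the same way. Cubics on $\mathbb{P}^3$ form a space of dimension $\binom{6}{3} = 20$, while $h^0(3K) = 18 - 4 + 1 = 15$, so the cubics vanishing on $X$ form a space of dimension at least $20 - 15 = 5$. Among these, the products $Q \cdot L$ with $L$ a linear form span only a $4$-dimensional subspace, since $L \mapsto QL$ is injective. As $5 > 4$, there is a cubic $C$ vanishing on $X$ that is not a multiple of $Q$. Because $Q$ is irreducible and $Q \nmid C$, the surfaces share no common component, so $Q \cap C$ is a purely one-dimensional scheme of degree $2 \cdot 3 = 6$ by B\'ezout. Since $X \subseteq Q \cap C$ with $X$ an irreducible reduced curve of degree $6$, matching degrees forces $X = Q \cap C$, the desired complete intersection.

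The step needing the most care is this final identification, turning the inclusion $X \subseteq Q \cap C$ into an equality; it depends squarely on the two points I would have to justify carefully, namely the irreducibility of $Q$ (to exclude a common component with $C$) and the B\'ezout degree count, after which equality of degrees closes the argument. A cleaner but heavier route is to invoke Max Noether's theorem that the canonical curve is projectively normal, which makes every restriction map $H^0(\mathbb{P}^3, \mathcal{O}(m)) \to H^0(X, mK)$ surjective and pins the spaces of quadrics and cubics through $X$ to dimensions exactly $1$ and $5$; this is not strictly required for existence but removes any slack in the dimension inequalities.
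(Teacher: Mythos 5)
Your proposal is correct. The paper does not prove this proposition at all --- it quotes it verbatim as Proposition 2.6 of Chapter 7 from \cite{miranda1995algebraic} --- and your argument (canonical embedding, Riemann--Roch counts $10-9=1$ and $20-15=5$ against the spaces of quadrics and cubics, irreducibility of $Q$ via its rank, then B\'ezout to force $X = Q \cap C$) is precisely the standard proof found in that cited source, so there is nothing to reconcile.
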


Given the basis of holomorphic 1-forms on the Octa-8 surface that arise from admissible cone metrics, we have $\omega_1 \omega_4 = \omega_3^2.$ Furthermore, we define meromorphic 1-forms $f, g,$ and $h$ by $$\begin{array}{rl}(f) & := \left(\frac{\omega_1}{\omega_2}\right) = - \widetilde{p_1} - \widetilde{p_2}_1 - \widetilde{p_2}_2 - \widetilde{p_2}_3 - \widetilde{p_2}_4 + 5 \widetilde{p_3},\\
(g) & := \left(\frac{\omega_3}{\omega_2}\right) = 2 \widetilde{p_1} - \widetilde{p_2}_1 - \widetilde{p_2}_2 - \widetilde{p_2}_3 - \widetilde{p_2}_4 + 2 \widetilde{p_3},\\
(h) & := \left(\frac{\omega_4}{\omega_2}\right) = 5 \widetilde{p_1} - \widetilde{p_2}_1 - \widetilde{p_2}_2 - \widetilde{p_2}_3 - \widetilde{p_2}_4 - \widetilde{p_3}.\end{array}$$ Then $$\left(f^2 g\right) = -3 \widetilde{p_2}_1 - 3 \widetilde{p_2}_2 - 3 \widetilde{p_2}_3 - 3 \widetilde{p_2}_4 + 12 \widetilde{p_3} \quad \textrm{and} \quad \left(h^2 g\right) = 12 \widetilde{p_1} - 3 \widetilde{p_2}_1 - 3 \widetilde{p_2}_2 - 3 \widetilde{p_2}_3 - 3 \widetilde{p_2}_4.$$ After appropriate scaling, we have $f^2 g - 1 = h^2 g$ which yields a cubic $\omega_1^2 \omega_3 - \omega_2^3 = \omega_3 \omega_4^2.$\\

On the other hand, we can define meromorphic functions $u$ and $v$ such that $(u) : = \left(\frac{\omega_2}{\omega_4}\right)$ and $(v) := \left(\frac{\omega_3}{\omega_4}\right).$ Then after scaling, we have $\frac{v^5}{u^3} - 1 = \frac{v}{u^3},$ or in other words, $v^5 - v = u^3$ or $\omega_3^5 - \omega_3 \omega_4^4 = \omega_2^3 \omega_3^2.$ The equation $u^3 = v^5 - v$ suggests that the abstract surface is a threefold branched cover over a sphere branched at $h = 0,$  $\infty,$ and the fourth roots of unity. According to the classification of cyclically branched coverings over spheres (Appendix~\ref{wronski}), the branching indices of the threefold covering must be either $(1, 1, 1, 1, 1, 1)$ or $(1, 1, 1, 2, 2, 2).$ We claim that it cannot be the latter by computing the weight distributions. On the Octa-8 surface, we have $\textrm{wt}(\widetilde{p_1}) = \textrm{wt}(\widetilde{p_3}) = 4.$ Computation of the Wronskian (Corollary \ref{cor: wronski}) tells us that $\textrm{wt}(\widetilde{p_2}_i) = 4$ for $i = 1, 2, 3, 4,$ and that there are 36 points of $\textrm{wt} = 1$ each. All six branched points on the $(1, 1, 1, 1, 1, 1)$ covering also have $\textrm{wt}_i = 4,$ and the surface has 36 points of $\textrm{wt} = 1$ each. On the other hand, the six branched points on the covering defined by $(1, 1, 1, 2, 2, 2)$ have $\textrm{wt}_i = 2,$ hence this surface cannot have the same conformal structure as the Octa-8 surface. \\

In the following theorem, we prove that the threefold cyclic cover defined by $(1, 1, 1, 1, 1, 1)$ is the order three Gauss map on a genus four minimal surface, namely, Schoen's minimal I-WP surface. The fundamental piece of Schoen's minimal I-WP surface can be viewed as a central chamber with handles toward the vertices of a cube. We will show that the underlying conformal structure of Schoen's minimal I-WP surface is compatible with that of the Octa-8 surface.

\begin{thm}\label{thm: iwp} Given the cone metrics on the compact quotient of the Octa-8 surface, the Weierstrass representation defines a conformally parametrized genus four minimal surface, namely Schoen's minimal I-WP surface.
\end{thm}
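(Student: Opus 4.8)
The plan is to build explicit Weierstrass data out of the basis $\{\omega_i\}$ and to invoke the representation theorem of \cite{weber2001clay}. The quadric relation $\omega_1 \omega_4 = \omega_3^2$ derived above is exactly what is needed for conformality, so I would set
$$\varphi_1 = \tfrac{1}{2}(\omega_4 - \omega_1), \qquad \varphi_2 = \tfrac{i}{2}(\omega_1 + \omega_4), \qquad \varphi_3 = \omega_3,$$
all holomorphic 1-forms on the compact quotient. This is the Weierstrass pair with height differential $dh = \omega_3$ and Gauss map $G = \omega_3 / \omega_4 = v$, the degree-three function appearing in $u^3 = v^5 - v$; indeed $(\omega_1/\omega_3) = (\omega_3/\omega_4) = 3\widetilde{p_3} - 3\widetilde{p_1}$, so the two candidate expressions for $G$ agree precisely because of the quadric. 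A one-line computation gives $\varphi_1^2 + \varphi_2^2 = -\omega_1 \omega_4 = -\omega_3^2 = -\varphi_3^2$, hence $\sum \varphi_i^2 \equiv 0$.

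Next I would verify the immersion condition $\sum |\varphi_i|^2 \neq 0$ by a divisor count. The Gauss map has divisor $(G) = 3\widetilde{p_3} - 3\widetilde{p_1}$ and the height differential has divisor $(dh) = (\omega_3) = 3\widetilde{p_1} + 3\widetilde{p_3}$ of total degree $2g - 2 = 6$. The induced metric $ds = \tfrac{1}{2}\bigl(|G| + |G|^{-1}\bigr)\,|dh|$ is then positive and finite at every point: the third-order zero of $dh$ at $\widetilde{p_3}$ cancels the third-order pole of $|G|^{-1}$, the third-order zero of $dh$ at $\widetilde{p_1}$ cancels the third-order pole of $|G|$, and $dh$ is nonvanishing elsewhere. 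In particular the branch points of $G$ over $v = \pm 1, \pm i$, where $dh \neq 0$, are genuine flat points of the surface rather than branch points of the immersion. By \cite{weber2001clay} the map $z \mapsto \textrm{Re}\int^z (\varphi_1, \varphi_2, \varphi_3)$ is thus a conformal minimal immersion.

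It then remains to close the immersion into a triply periodic surface and to identify it. For the immersion to descend to $\R^3/\Lambda$ the real periods $\textrm{Re}\oint_\gamma \varphi_i$ over $H_1(X;\Z)$ must span a rank-three lattice, and I would establish this from symmetry rather than direct integration. The branch locus $\{0, \infty, 1, -1, i, -i\}$ of $G$ is the vertex set of a regular octahedron on $\CP^1$, so the octahedral group together with the order-three deck group of the covering acts on $X$ by conformal automorphisms; the orientation-reversing symmetries (the reflection planes of the octahedron) force the obstructing imaginary periods to vanish and collapse the period problem to a single real scale. Matching this data — a genus-four triply periodic minimal surface with degree-three Gauss map $v$ satisfying $u^3 = v^5 - v$, branched over octahedral vertices, and carrying cubic point-group symmetry — with the classical Weierstrass description of Schoen's I-WP surface identifies the two.

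The hardest part is the period problem. Conformality is automatic from the quadric $\omega_1\omega_4 = \omega_3^2$ and regularity is a bookkeeping of divisors, but proving that the real periods close up to a discrete rank-three lattice (so that one obtains a genuine, embedded triply periodic surface and not one whose period group is of the wrong rank) is genuinely delicate. The efficient route is to use the octahedral symmetry to pin the Weierstrass data down uniquely and then import existence and embeddedness from the established construction of I-WP, instead of solving the period equations directly.
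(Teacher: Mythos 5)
Your proposal is correct and follows essentially the same route as the paper: after setting $dh = \omega_3$, your data $\varphi_1 = \tfrac{1}{2}(\omega_4 - \omega_1)$, $\varphi_2 = \tfrac{i}{2}(\omega_1 + \omega_4)$, $\varphi_3 = \omega_3$ is literally the paper's, your Gauss map $G = \omega_3/\omega_4$ coincides with the paper's $G = \omega_1/\omega_3$ via the quadric $\omega_1 \omega_4 = \omega_3^2$, and conformality and regularity are verified the same way (the quadric for $\sum \varphi_i^2 \equiv 0$, and the cancellation of the order-three zeros of $dh$ at $\widetilde{p_1}$ and $\widetilde{p_3}$ against the order-three pole and zero of $G$ there). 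The only substantive difference is one of completeness: you explicitly isolate the period problem --- that $\mathrm{Re}\oint_\gamma \varphi_i$ must span a rank-three lattice --- and sketch its resolution through the octahedral symmetry of the branch locus $\{0, \infty, \pm 1, \pm i\}$ together with matching against the known Weierstrass data of I-WP, whereas the paper's proof stops after the regularity check and simply asserts the surface is Schoen's I-WP. So your treatment is more careful on precisely the step the paper glosses over; your symmetry argument for killing the imaginary periods is a sketch rather than a proof, but it is the standard and correct way to close that gap.
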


\begin{proof}
Given the basis of holomorphic 1-forms that arise from the admissible cone metrics, we define $\varphi_i$ as follows: $$\varphi_1 = \frac{- \omega_1 + \omega_4}{2 \omega_3} d h, \qquad \varphi_2 = i \frac{\omega_1 + \omega_4}{2 \omega_3} d h, \qquad \varphi_3 = d h.$$ Then we have $\sum \varphi_i^2 = 0$ and $\sum |\varphi_i|^2 \neq 0.$ Then the following Weierstrass representation $$\textrm{Re} \int^z \left(\frac{1}{2} \left(\frac{1}{G} - G\right), \frac{i}{2} \left(\frac{1}{G} + G\right), 1\right) d h$$ defines a conformally parametrized minimal surface in $\R^3$ where the Gauss map $G = \frac{\omega_1}{\omega_3}$ has a zero and a pole of order three. 
\end{proof}

This is quite a remarkable result since the order-twelve rotational symmetry is not visible on the polyhedral Octa-8 surface or the I-WP surface in $\R^3.$

\subsection{Truncated Octa-8}
\label{sec: (1,2,4,3)}
In this section, we look at another genus four triply periodic polyhedral surface that also arises as a regular Archimedean decoration of $\Gamma_{4, 1, 8}'.$ The 0-simplex is replaced by a truncated octahedron and the 1-simplices are replaced by hexagonal prisms. We call it the Truncated Octa-8 surface. We will show that its compact quotient is a fivefold cyclically branched covering over a 4-punctured sphere. We will find an explicit basis of holomorphic 1-forms and show that the quotient surface has a conformal realization as a well-known Euclidean uniform polyhedron, the dodecadodecahedron, also known as Kepler's small stellated dodecahedron. It is shown that this surface is also equivalent to Bring's curve (\cite{weber2005kepler}). We will also show that the underlying conformal structure is not compatible with any genus four minimal surface.\\

To prove the embeddedness of the polyhedral surface in Euclidean space, we again use the fact that cubes tile space. First we place a truncated octahedron in a cube so that the square faces of the truncated octahedron are tangent to the faces of the cube. Tiling space with such cubes results in the Muoctahedron. However, now we uniformly scale the trunctated octahedra in each cube so that the distance between two parallel hexagonal faces on neighboring truncated octahedra is equal to the edge length of the solids where we attach hexagonal prisms to connect the neighboring truncated octahedra. Figure~\ref{fig: truncated octa8_poly} shows the fundamental piece of the boundary surface. As there are five squares at each vertex of the boundary surface, we map the Euclidean squares to hyperbolic $\frac{2 \pi}{5}$-squares and obtain a hyperbolic tessellation (Figure~\ref{fig: truncated octa8_hyp}).

\begin{figure}[htbp] 
   \centering
   \includegraphics[width=2in]{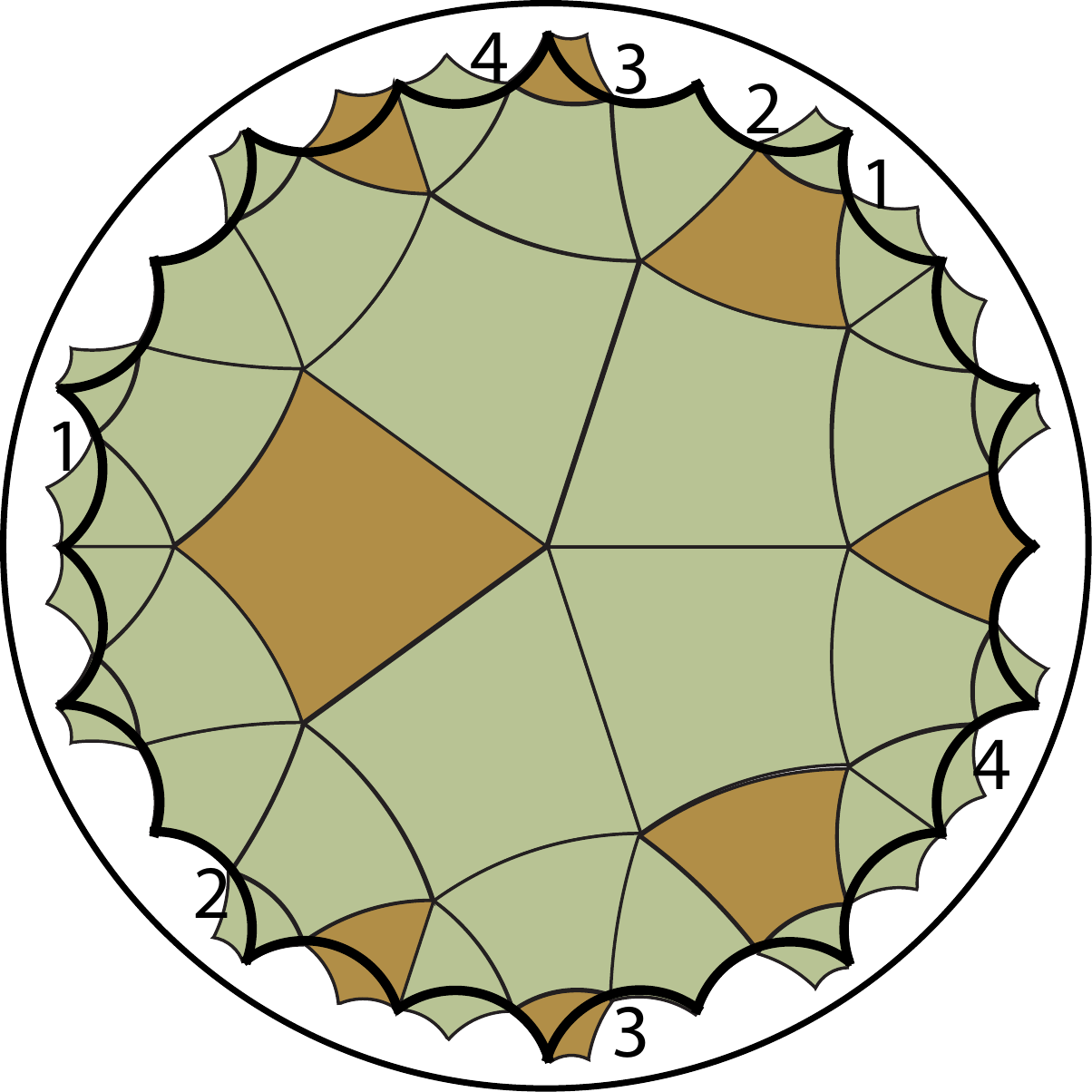} 
	\caption{Hyperbolic description of the Truncated Octa-8 surface}
	\label{fig: truncated octa8_hyp}
\end{figure}

We trace the polyhedral geodesics to find the identification of edges in the hyperbolic setting where all of them are closed after going through six squares. Figure~\ref{fig: truncated octa8_hyp} exhibits an order-five rotational symmetry that preserves the hyperbolic geodesics. We take the quotient of the hyperbolic fundamental domain by the cyclic group of order five which results in a sphere. In fact, the surface is a fivefold cyclically branched covering over a 4-punctured sphere with branching indices $(1, 2, 4, 3).$ This is shown in Lemma 2.1, \cite{weber2005kepler}. Weber also shows that this quotient surface has a conformal realization as a well-known compact polyhedron, the dodecadodecahedron, also known as Kepler's small stellated dodecahedron. This surface has an algebraic description as Bring's curve.\\

Lastly, we show that the conformal structure induced from the cone metric on the abstract quotient of the Truncated Octa-8 surface is not compatible with any minimal surface in $\R^3.$

\begin{thm} \label{thm: truncated octa8} The conformal structure induced from the cone metric on the abstract quotient of the Truncated Octa-8 surface is not compatible with any triply periodic minimal surface in $\R^3.$
\end{thm}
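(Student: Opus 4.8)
The plan is to obstruct the minimal realization at the level of the Gauss map, running the argument that produced I-WP in Theorem~\ref{thm: iwp} in reverse. For a triply periodic minimal surface of genus four the compact quotient $X$ is a closed genus-four Riemann surface, the height differential $dh$ is a holomorphic $1$-form, and the stereographic Gauss map $G$ is a meromorphic function of degree $g-1 = 3$, since $\int_X K\,dA = 2\pi(2-2g)$ forces the spherical image to have degree $g-1$. Regularity of the induced metric $\frac{1}{2}(|G|+|G|^{-1})|dh|$ forces the zeros of $dh$ to sit exactly at the zeros and poles of $G$ with matching orders, so that $(dh) = (G)_0 + (G)_\infty$. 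As $(dh)$ is canonical and $(G)_0 \sim (G)_\infty$, this yields $2\,(G)_\infty \sim K$: the polar divisor $D := (G)_\infty$ defines a $g^1_3$ whose line bundle $\mathcal{O}(D)$ is a theta characteristic. For the Octa-8 this held with $D = 3\widetilde{p_1}$ and $(\omega_4) = 6\widetilde{p_1} \sim K$, which is precisely why I-WP exists. The strategy for the Truncated Octa-8 is to show no such theta-characteristic $g^1_3$ is available. (Note that, unlike the genus-three Octa-4 case, hyperellipticity is \emph{not} the discriminant here, since I-WP exhibits a non-hyperelliptic genus-four minimal surface; it is the theta-characteristic criterion that does the work.)

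First I would record the basis of holomorphic $1$-forms coming from the admissible cone metrics. The multipliers $a = 1,2,3,4$ of the branching indices $(1,2,4,3)$ yield, via Corollary~\ref{cor: admissible metric}, four forms with divisors $(\omega_1) = \widetilde{p_2} + 3\widetilde{p_3} + 2\widetilde{p_4}$, $(\omega_2) = \widetilde{p_1} + 3\widetilde{p_2} + 2\widetilde{p_3}$, $(\omega_3) = 2\widetilde{p_1} + \widetilde{p_3} + 3\widetilde{p_4}$, and $(\omega_4) = 3\widetilde{p_1} + 2\widetilde{p_2} + \widetilde{p_4}$. Comparing products gives $(\omega_1\omega_4) = (\omega_2\omega_3) = 3(\widetilde{p_1}+\widetilde{p_2}+\widetilde{p_3}+\widetilde{p_4})$, so after scaling $\omega_1\omega_4 = \omega_2\omega_3$; the canonical image of $X$ lies on a smooth (rank-four) quadric in $\mathbb{P}^3$. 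By the standard genus-four theory from \cite{miranda1995algebraic}, the two rulings of this quadric are the only two $g^1_3$'s on $X$, namely $G_1 = \omega_1/\omega_2 = \omega_3/\omega_4$ and $G_2 = \omega_1/\omega_3 = \omega_2/\omega_4$, with polar divisors $D_1 = \widetilde{p_1} + 2\widetilde{p_2}$ and $D_2 = 2\widetilde{p_1} + \widetilde{p_4}$. Since $2D \sim K$ depends only on the $g^1_3$, it suffices to test these two representatives.

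The core computation is then in $\textrm{Pic}(X)$. Because each $\widetilde{p_i}$ is totally ramified over its branch point, $5[\widetilde{p_i}]$ is the class of a fiber for all $i$, so the differences $\alpha = [\widetilde{p_1}]-[\widetilde{p_2}]$, $\beta = [\widetilde{p_2}]-[\widetilde{p_3}]$, $\gamma = [\widetilde{p_3}]-[\widetilde{p_4}]$ are $5$-torsion. Imposing $(\omega_1) \sim (\omega_2) \sim (\omega_3) \sim (\omega_4) \sim K$ together with the Riemann--Hurwitz form of $K$ collapses these to the single relation $\alpha = 2\beta + 3\gamma$, so the subgroup they generate is $(\mathbb{Z}/5)^2$ with free generators $\beta,\gamma$. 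Substituting into $2D_1 - K \sim 2\alpha + 2\gamma$ and $2D_2 - K \sim 4\alpha + 3\beta$ gives the classes $4\beta + 3\gamma$ and $\beta + 2\gamma$, both nonzero in $(\mathbb{Z}/5)^2$. Hence neither $g^1_3$ is a theta characteristic, so $X$ carries no degree-three meromorphic function whose polar divisor squares to the canonical class. By the first paragraph this rules out any regular minimal immersion with this conformal type, in particular any triply periodic one, proving the theorem.

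I expect the main obstacle to be making the $\textrm{Pic}(X)$ bookkeeping rigorous: the linear equivalences among the branch-point classes must be derived carefully from the cyclic-cover structure (the $1$-form divisors and the totally ramified fibers), and one must confirm that the subgroup they generate is exactly $(\mathbb{Z}/5)^2$, so that the residues $4\beta+3\gamma$ and $\beta+2\gamma$ are genuinely nontrivial rather than artifacts of an incomplete relation set. A useful cross-check is the $S_5 = \textrm{Aut}(X)$ action, which permutes theta characteristics and must carry the two trigonal pencils to each other or fix them; and since the Truncated Octa-8 is Bring's curve, the conclusion can be reconciled with Weber's description in \cite{weber2005kepler}.
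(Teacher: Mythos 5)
Your proposal takes a genuinely different route from the paper's, and most of it is correct. The paper never mentions the Gauss map, $g^1_3$'s, or theta characteristics: its entire proof is that a conformal minimal immersion would force a rank-three quadric $\sum \varphi_i^2 = 0$ through the canonical curve, that the curve visibly lies on the rank-four quadric $\omega_1 \omega_4 - \omega_2 \omega_3 = 0$ (the same quadric you found), and that by B\'ezout's theorem a degree-six canonical curve cannot lie on two distinct quadrics, so no rank-three quadric can exist. Your reduction in the first two paragraphs — Meeks' degree-$(g-1)$ Gauss map, $(dh) = (G)_0 + (G)_\infty$, hence a $g^1_3$ with $2D \sim K$, and the identification of the only two $g^1_3$'s as the rulings with polar divisors $D_1 = \widetilde{p_1} + 2 \widetilde{p_2}$ and $D_2 = 2 \widetilde{p_1} + \widetilde{p_4}$ — is sound, and your divisor arithmetic ($2D_1 - K \sim 4\beta + 3\gamma,$ $2D_2 - K \sim \beta + 2\gamma$) checks out.

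The gap is exactly where you flag it, and as written it is fatal: concluding that $4\beta + 3\gamma$ and $\beta + 2\gamma$ are nonzero requires knowing that the relations you derived ($5$-torsion and $\alpha = 2\beta + 3\gamma$) are a \emph{complete} list of relations among the classes $[\widetilde{p_i}] - [\widetilde{p_j}]$ in $\textrm{Pic}^0(X).$ Nothing in your bookkeeping excludes an extra linear equivalence; if, say, $\beta + 2\gamma = 0$ happened to hold, then $G_2$ would be a theta-characteristic pencil and your obstruction would evaporate. The fact that the ramification classes of a prime-cyclic cover of the sphere totally ramified at $n$ points generate $(\mathbb{Z}/p)^{n-2}$ is a genuine theorem needing proof or citation, not a consequence of the divisor computations. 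The irony is that you do not need it: from your own data, $D_1 + D_2 = 3\widetilde{p_1} + 2\widetilde{p_2} + \widetilde{p_4} = (\omega_4) \sim K,$ and $D_1 \sim D_2$ is impossible because a function with divisor $D_1 - D_2$ would have degree two and make $X$ hyperelliptic — an assumption both your argument and the paper's already rely on (yours, to say the rulings are the only two $g^1_3$'s; the paper's, to make the canonical curve have degree six for B\'ezout). So $2D_i \sim K$ would force $D_i \sim K - D_i \sim D_{3-i},$ a contradiction, with no torsion computation at all. With your third paragraph replaced by these two lines, the proof closes and stands as a clean alternative to the paper's: you trade the rank count plus B\'ezout for the classical fact that neither ruling of a smooth quadric through a canonical genus-four curve can be a theta characteristic.
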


\begin{proof} We refer to earlier discussions in Section~\ref{sec: three mu} and Section~\ref{sec: (1,4,7)}. To parametrize a minimal surface in $\R^3,$ we need a rank-three quadric. However, we will show the existence of a quadric $Q(\omega_1, \omega_2, \omega_3, \omega_4) = 0$ of rank four, then use B{\'e}zout's theorem to show that we cannot have a rank-three quadric of our desire. B{\'e}zout's theorem says if $X$ lies on two distinct quadrics $Q$ and $Q',$ then $Q \cap Q'$ is a curve of degree 4. \\

Given the branching indices $(1, 2, 4, 3)$ that define the surface, the admissible cone metrics induce a basis of holomorphic 1-forms with the following divisors:

$$\begin{array}{ccccc}(\omega_1) = && \widetilde{p_2} & + 3 \widetilde{p_3} & + 2 \widetilde{p_4} \\
(\omega_2) =  & \widetilde{p_1} & + 3 \widetilde{p_2} & + 2 \widetilde{p_3} & \\
(\omega_3) = & 2 \widetilde{p_1} && + \widetilde{p_3} & + 3 \widetilde{p_4} \\
(\omega_4) = & 3 \widetilde{p_1} & + 2 \widetilde{p_2} && + \widetilde{p_4}. \end{array}$$

Then $Q(\omega_1, \omega_2, \omega_3, \omega_4) = \omega_1 \omega_4 - \omega_2 \omega_3 = 0$ is a rank-four quadric, hence by B{\'e}zout's theorem, there can be no quadric of rank three to parametrize a minimal surface in $\R^3.$ 
\end{proof}

\subsection{Octa-4}
\label{sec: (1,2,5)}
In \cite{lee2017triply} is shown that there is no minimal surface that has the same underlying structure as the Octa-4 surface. In this section, given a basis of 1-forms and a homology basis, we will mimic the Weierstrass representation and find a smooth surface of the same conformal type. Given $\{\omega_i\}$ that is induced from the admissible cone metrics, we pick a homology basis and calculate the period matrix $(\Pi) = (\int_{\gamma_k} \omega_j).$ Then we will find coefficients $a_{ij}$ that satisfy \begin{equation} \label{eqn: lattice} \sum_j \begin{pmatrix}a_{1 j}\\
a_{2 j}\\
a_{3 j}\end{pmatrix} \int_{\gamma_k} \omega_j = \textrm{Re} \int_{\gamma_k} \begin{pmatrix}\varphi_1\\
\varphi_2\\
\varphi_3\end{pmatrix} = (P) \end{equation} where $P$ is a $3 \times 6$ matrix whose column vectors form a lattice in Euclidean space. The lattice is a result of the periods of cycles on the polyhedral surface in $\R^3.$ 

\begin{thm} There exists an explicit parametrization of a smooth surface that has the same conformal type as the Octa-4 surface.
\end{thm}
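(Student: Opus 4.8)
The plan is to produce the smooth surface as the image of a harmonic map of the form $F(z)=\operatorname{Re}\int^z(\varphi_1,\varphi_2,\varphi_3)$, where each $\varphi_i=\sum_j a_{ij}\omega_j$ is a linear combination of the holomorphic $1$-forms $\{\omega_1,\omega_2,\omega_3\}$ obtained from the admissible cone metrics in Example~\ref{exmp: 125_}. Because each coordinate is the real part of an abelian integral, $F$ is automatically harmonic; the two things to arrange are that its periods reproduce the lattice $P$ of the polyhedral Octa-4 and that $F$ is an immersion. I would first fix an explicit symplectic homology basis $\gamma_1,\dots,\gamma_6$ on $X$, using the order-eight rotation $a$ to cut the period integrals down to a single fundamental sector, and compute the period matrix $\Pi=\bigl(\int_{\gamma_k}\omega_j\bigr)$. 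In parallel I would read off the target: traversing each cycle $\gamma_k$ on $\Pi\subset\R^3$ returns the basepoint shifted by a vector of the translation lattice $\Gamma$, and collecting these six vectors yields the $3\times 6$ real matrix $P$ in Equation~\eqref{eqn: lattice}.

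The coefficients are then forced by the linear condition $\operatorname{Re}(A\,\Pi)=P$, where $A=(a_{ij})$. I expect this system to be uniquely solvable for a complex matrix $A$, and moreover for $A$ to be invertible: the real-linear map $A\mapsto\operatorname{Re}(A\Pi)$ from $\C^{3\times 3}$ to $\R^{3\times 6}$ is injective, since a holomorphic $1$-form whose real periods all vanish has an exact harmonic real part and is therefore zero; counting real dimensions ($18=18$) this map is a bijection, and if $A$ were singular some nonzero real combination of the rows of $P$ would vanish, contradicting that the columns of $P$ span a rank-three lattice. The concrete labor here is simply to carry out the explicit evaluation of $\Pi$ and $P$ and then invert the system to display $A$; this bookkeeping, rather than any conceptual gap, is the bulk of the computation.

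With $A$ invertible, $\varphi_1,\varphi_2,\varphi_3$ again form a basis of holomorphic $1$-forms, and $F$ is well defined on $X$ modulo $\Gamma$, hence descends to a map $X\to\R^3/\Gamma$ and lifts to a $\Gamma$-periodic map into $\R^3$ whose period data matches the Octa-4 by construction. The remaining point --- and the one I regard as the genuine obstacle --- is to verify that $F$ is an immersion. Since $X$ is non-hyperelliptic of genus three its canonical map is an embedding, so the $1$-forms are base-point-free and $(\varphi_1,\varphi_2,\varphi_3)$ never vanish simultaneously; but because $F$ is only harmonic and not conformal (no such conformal, i.e.\ minimal, realization exists), nonvanishing of the differential does not suffice. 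One must instead show that $\operatorname{Re}$ and $\operatorname{Im}$ of $(\varphi_1,\varphi_2,\varphi_3)/dz$ are linearly independent at every point, that is, rule out branch points where these two real vectors become parallel. I would attack this either by exploiting the residual freedom in the homology basis and in $A$, or by checking nondegeneracy directly at the finitely many points singled out by the symmetry group. Once this is established, $F$ is an explicit parametrization of a smooth immersed surface that is conformally the Octa-4 and carries the same period matrix, which is the assertion of the theorem.
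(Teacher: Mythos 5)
Your proposal follows essentially the same route as the paper's proof: the paper chooses the canonical homology basis with $\alpha_1,\alpha_2,\alpha_3$ lying in three distinct anti-prisms (so that only the $\beta_k$ have nonzero spatial periods, giving the explicit $3\times 6$ lattice matrix $P$), computes the period matrix $(\Pi)$ of the basis $\{\omega_j\}$ from Example~\ref{exmp: 125_}, solves the linear condition (\ref{eqn: lattice}) for explicit coefficients $(a_{ij})$, and declares $z\mapsto \operatorname{Re}\int^z(\varphi_1,\varphi_2,\varphi_3)^t$ a harmonic parametrization. The two refinements you add go beyond what the paper itself does --- it neither argues solvability and uniqueness abstractly (it simply exhibits $(a_{ij})$ numerically) nor addresses the immersion/branch-point verification you single out as the genuine obstacle --- so your plan, carried out through the explicit computation, reproduces the paper's argument.
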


\begin{proof} We will choose a homology basis that makes computation of $P$ easier. We let $\alpha_1, \alpha_2, \alpha_3$ lie in three distinct anti-prisms. Then we pick $\beta_k$ so that the intersection numbers are $\iota(\alpha_i, \beta_j) = \delta_{i j}.$ In other words, $\{\alpha_k, \beta_k\}_{k = 1}^3$ results in a (canonical) homology basis. In Figure~\ref{fig: octa4 basis}, $\alpha_k$ are marked as bold lines and $\beta_k$ are marked as dotted lines. 

\begin{figure}[htbp] 
   \centering
   \includegraphics[width=4in]{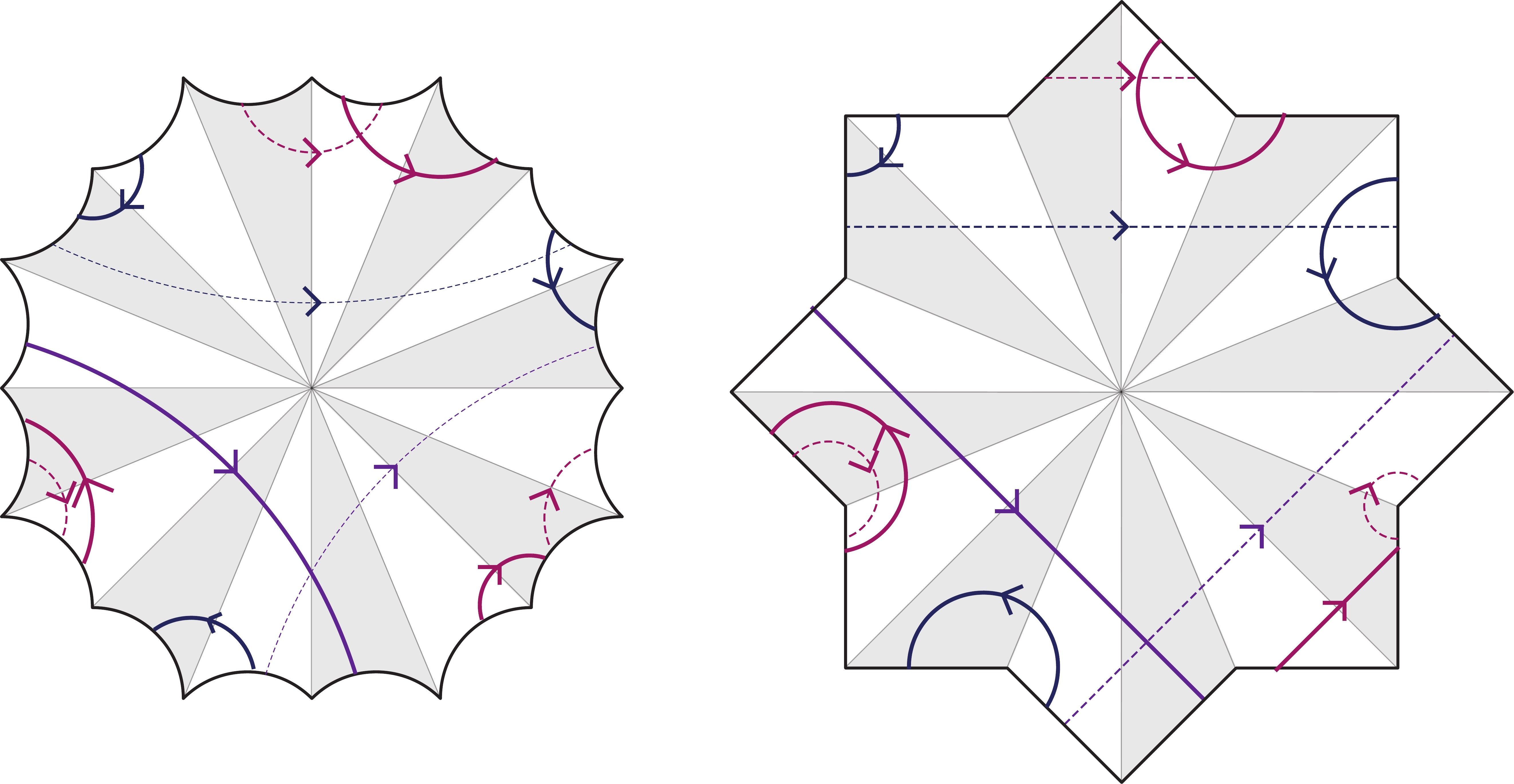} 
   \caption{Canonical homology basis on the Octa-4}
   \label{fig: octa4 basis}
\end{figure}

The column vectors of $P$ form a lattice in $\R^3.$ Moreover, due to our choice of cycles, only $\beta_k$ have nonzero periods in $\R^3,$ hence $$(P) = \begin{pmatrix}0 & 0 & 0 & 2 & 0 & 2\\
0 & 0 & 0 & 0 & -2 & -2\\
0 & 0 & 0 & 2 & 2 & 0\end{pmatrix}.$$ 

Since we already have an explicit basis of holomorphic 1-forms given by the admissible cone metrics (Example \ref{exmp: 125_}), we compute the period matrix $(\Pi)$ by $(\int_{\alpha_i} \omega_j, \int_{\beta_i} \omega_j)_{j = 1}^3,$ then $$(\Pi) = \begin{pmatrix}1 - i & \frac{-1 - i}{1 + \sqrt{2}} & \frac{1 + i}{1 + \sqrt{2}} & 1 + i & \sqrt{2} & 2 - \sqrt{2}\\
-2 i & 2 i & 2 i & 2 i & -2 & -2\\
-1 - i & (1 + \sqrt{2}) (1 - i) & (1 + \sqrt{2}) (-1 + i) & 1 - i & \sqrt{2} i & -(2 + \sqrt{2}) i \end{pmatrix}.$$ We write $(\Pi) = (A | B),$ then the Jacobian is defined as $$A^{-1} B = \begin{pmatrix}i & \frac{1 + i}{2} & \frac{1 + i}{2}\\
\frac{1 + i}{2} & i & \frac{1 + i}{2}\\
\frac{1 + i}{2} & \frac{1 + i}{2} & i\end{pmatrix}.$$ Then the coefficients $$(a_{i j}) = \begin{pmatrix} \frac{2 + \sqrt{2} - (4 + 3 \sqrt{2}) i}{4 + 2 \sqrt{2}} & 0 & \frac{1 - \sqrt{2} + i}{2}\\
0 & 1 & 0\\
\frac{1 + \sqrt{2} - i}{2} & 0 & \frac{1 + (1 - \sqrt{2}) i}{2}\end{pmatrix}$$ satisfy (\ref{eqn: lattice}), and $z \mapsto \textrm{Re} \int^z \left(\varphi_1, \varphi_2, \varphi_3\right)^t$ results in a parametrization of a harmonic surface where $\varphi_i = \sum_j a_{ij} \omega_j.$ The following figure shows the parametrization of the surface in $\R^3.$

\begin{figure}[htbp] 
	\centering
	\includegraphics[width=4in]{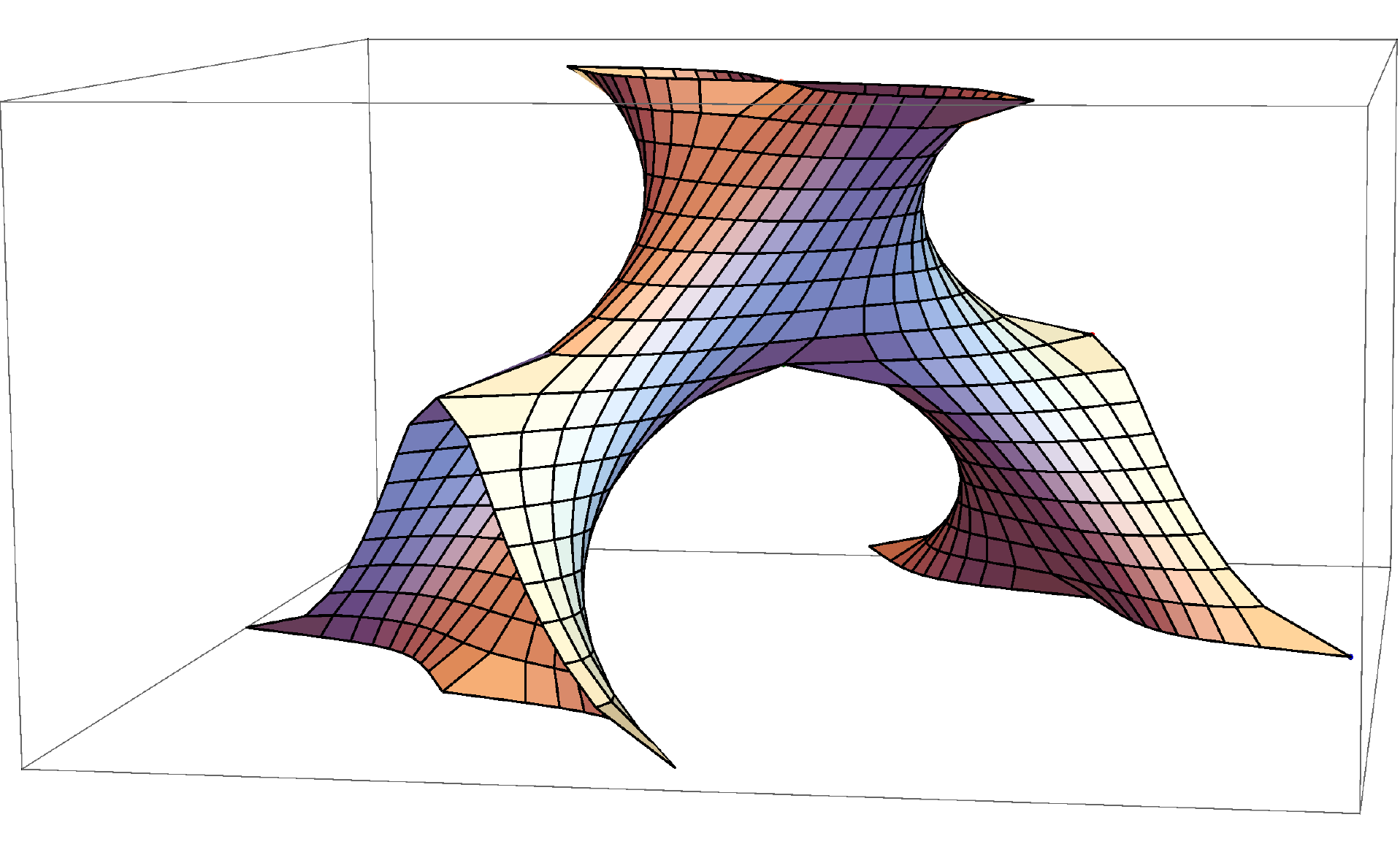}
	\caption{Fundamental piece of the harmonic parametrization of the Octa-4 surface}
\end{figure}
\end{proof}

\section{Figure Credits}
\begin{itemize}
\item Figure~\ref{octa4-5}\\*
``Construction of $\Pi,$'' \cite{lee2017triply}

\item Figure~\ref{hyperbolic_tiling}\\*
``Hyperbolic realization of fundamental piece,'' \cite{lee2017triply}

\item Figure~\ref{flat}\\*
``Flat structure of the fundamental piece,'' \cite{lee2017triply}
\end{itemize}
\newpage

\appendix
\section{All cyclically branched covers up to genus five}
\label{genera}

In this section, we list all cyclically branched coverings over punctured spheres of lower genera (Proposition~\ref{prop: g(X)} and Theorem~\ref{thm: genera})

\begin{mmaCell}[moredefined={Genus, Normalize1, Normalize2, \
dicovers},morepattern={d_, di_, d, di, \#, g_, \
g},morefunctionlocal={i, k}]{Input}
Genus[d_, di_] := 1-d+1/2 Sum[d-GCD[d, di[[i]]], \{i,1,Length[di]\}];
Normalize1[d_, di_] := 
	First[
	 Sort[Flatten[Map[Table[RotateLeft[Mod[#*di,d],k],\{k,0,Length[\ di]-1\}]&,
			Select[Range[1,d-1],GCD[#,d]==1 &] ],1]]];
Normalize2[\{d_, di_,g_\}] :=
	\{d,First[Sort[\{Normalize1[d,di],Normalize1[d,Reverse[di]]\}]],g\};
dicovers[di_]:=Module[\{\mmaLoc{d}\},
	\mmaLoc{d}=Plus@@di;
	Map[\{#,di,Genus[#,di]\}&,Select[Divisors[\mmaLoc{d}],Max[di]<#&]]
	]
\end{mmaCell}

For example, the following codes show the cyclically branched coverings over thrice punctured spheres of lower genera.

\vspace{0.4cm}

\begin{mmaCell}[moredefined={dicovers, \ Normalize2},morepattern={\#}]{Input}
Flatten[dicovers/@Union[Sort/@Tuples[Range[336],3]],1];
Select[
Select[
Union[Map[Normalize2,
\end{mmaCell}

\begin{mmaCell}{Output}
\{7,\{1,1,5\},3\},\{7,\{1,2,4\},3\},\{8,\{1,1,6\},3\},\{8,\{1,2,5\},3\},\{9,\{1,1,7\},4\},
\{9,\{1,2,6\},3\},\{10,\{1,1,8\},4\},\{10,\{1,2,7\},4\},\{11,\{1,1,9\},5\},\{11,\{1,2,8\},5\},
\{12,\{1,1,10\},5\},\{12,\{1,2,9\},4\},\{12,\{1,3,8\},3\},\{12,\{1,4,7\},4\},\{12,\{1,5,6\},3\},
\{14,\{1,6,7\},3\},\{15,\{1,4,10\},5\},\{15,\{1,5,9\},4\},\{16,\{1,7,8\},4\},\{18,\{1,8,9\},4\},
\{20,\{1,9,10\},5\},\{22,\{1,10,11\},5\}\}
\end{mmaCell}

\vspace{0.4cm}
We sort the coverings by their genus.

\begin{table}[htbp]
\centering
\begin{tabular}[h]{|c|c||c|c|}
\hline
$d$ & $(d_i)$ & $d$ & $(d_i)$ \\ \hline
7 & (1, 1, 5) & 4 & (1, 1, 1, 1) \\ \hline
7 & (1, 2, 4) & 4 & (1, 1, 3, 3) \\ \hline
8 & (1, 1, 6) & 6 & (1, 3, 3, 5) \\ \hline
8 & (1, 2, 5) & 6 & (1, 3, 4, 4) \\ \hline 
9 & (1, 2, 6) & 3 & (1, 1, 1, 1, 2) \\ \hline 
12 & (1, 3, 8) & 4 & (1, 1, 2, 2, 2) \\ \hline
12 & (1, 5, 6) & 2 & (1, 1, 1, 1, 1, 1, 1, 1) \\ \hline
14 & (1, 6, 7) && \\ \hline
\end{tabular}
	\label{tab: genus three cyc br covers}
	\caption{Genus three cyclically branched coverings over punctured spheres}
\end{table}

\begin{table}[htbp]
\centering
\begin{tabular}[h]{|c|c||c|c|}
\hline
$d$ & $(d_i)$ & $d$ & $(d_i)$ \\ \hline
9 & (1, 1, 7) & 5 & (1, 1, 1, 2) \\ \hline
10 & (1, 1, 8) & 5 & (1, 1, 4, 4) \\ \hline
10 & (1, 2, 7) & 5 & (1, 2, 3, 4) \\ \hline 
12 & (1, 2, 9) & 6 & (1, 1, 2, 2) \\ \hline
12 & (1, 4, 7) & 6 & (1, 2, 4, 5) \\ \hline
15 & (1, 5, 9) & 8 & (1, 4, 4, 7) \\ \hline
16 & (1, 7, 8) & 10 & (2, 5, 5, 8) \\ \hline
18 & (1, 8, 9) & 3 & (1, 1, 1, 1, 1, 1) \\ \hline
4 & (1, 1, 1, 2, 3) & 3 & (1, 1, 1, 2, 2, 2) \\ \hline
6 & (1, 2, 3, 3, 3) & 4 & (1, 2, 2, 2, 2, 3) \\ \hline
6 & (2, 2, 2, 3, 3) & 2 & (1, 1, 1, 1, 1, 1, 1, 1, 1, 1) \\ \hline
\end{tabular}
	\label{tab: genus four cyc br covers}
	\caption{Genus four cyclically branched coverings over punctured spheres}
\end{table}

\begin{table}[htbp]
\centering
\begin{tabular}[h]{|c|c||c|c|}
\hline
$d$ & $(d_i)$ & $d$ & $(d_i)$ \\ \hline11 & (1, 1, 9) & 6 & (1, 1, 3, 3, 4) \\ \hline
11 & (1, 2, 8) & 6 & (1, 2, 2, 3, 4) \\ \hline
12 & (1, 1, 10) & 4 & (1, 1, 1, 1, 2, 2) \\ \hline
15 & (1, 4, 10) & 4 & (1, 1, 2, 2, 3, 3) \\ \hline
20 & (1, 9, 10) & 6 & (2, 3, 3, 3, 3, 4) \\ \hline
22 & (1, 10, 11) & 3 & (1, 1, 1, 1, 1, 2, 2) \\ \hline
6 & (1, 1, 5, 5) & 4 & (1, 1, 2, 2, 2, 2, 2) \\ \hline
8 & (1, 1, 2, 4) & 2 & (1, 1, 1, 1, 1, 1, 1, 1, 1, 1, 1, 1) \\ \hline
10 & (1, 5, 5, 9) && \\ \hline
\end{tabular}
	\label{tab: genus five cyc br covers}
	\caption{Genus five cyclically branched coverings over punctured spheres}
\end{table}

\newpage

\section{Wronski computation for the Octa-4 surface}
\label{wronski}
In this section, we show Mathematica codes to compute the Wronksi metric of the underlying surface of the Octa-4 surface.

\begin{mmaCell}[moredefined={admiss, mplus, copies, wronski, \
pi},morepattern={d_, di_, di, d, \#, n_, l_, l, obj_, obj, pi_, aik_, \
aik},morefunctionlocal={a, i, k},morelocal={n, g, gk, gklogdiff, gdiff, raw, bi, rawd}]{Input}
  admiss[d_,di_]:=Select[Table[Mod[a di,d],\{a,1,d-1\}],
  Plus@@#==d && Times@@# !=0&]
  mplus[n_,l_]:=copies[l,Length[l]]+\mmaPat{n} IdentityMatrix[Length[l]]
  copies[obj_,n_]:=Module[\{\mmaLoc{i}\},Table[obj,\{i,1,\mmaPat{n}\}]]
  wronski[d_,di_,pi_,aik_]:=
  Module[\{n,g,gk,gklogdiff,gdiff,raw,bi, rawd\},
	n=Length[di];
	g=d(n-2)/2+1-Sum[GCD[di[[i]],d],\{i,1,n\}]/2;
  	gk=Table[Product[(x-pi[[i]])^aik[[k,i]],\{i,1,n\}],
		\{k,1,g\}];
  	gklogdiff=Map[Cancel[D[#,x]/#]&,gk];
  	gdiff=Table[NestList[Cancel[1/d gklogdiff[[i]]#+D[#,x]]&,
		1,g-1],\{i,1,g\}];
  	raw=PowerExpand[(Times@@gk)^(1/d)]Factor[Det[gdiff]];
  	rawd=D[raw,x]/raw;
  	bi=Map[Cancel[(x-#)rawd]/.x->#&,pi];
  	\{Cancel[raw /Product[(x-pi[[i]])^bi[[i]],\{i,1,n\}]],
  		Table[d/GCD[d,di[[i]]](g (g-1)/2+bi[[i]])-g (g+1)/2,
  		\{i,1,n\}]\}]
\end{mmaCell}

\begin{mmaCell}[moredefined={pi}]{Input}
  pi=\{0,1,-1\};
\end{mmaCell}

\begin{mmaCell}[moredefined={admiss}]{Input}
  admiss[8,\{1,2,5\}]
\end{mmaCell}

\begin{mmaCell}{Output}
  \{\{1,2,5\},\{2,4,2\},\{5,2,1\}\}
\end{mmaCell}

\begin{mmaCell}[moredefined={wronski, pi}]{Input}
  wronski[8,\{1,2,5\},pi,
\end{mmaCell}

\begin{mmaCell}{Output}
  \{\mmaFrac{3}{128} \mmaSup{(1+3 x)}{2},\{2,2,2\}\}
\end{mmaCell}

\newpage
\addcontentsline{toc}{section}{Bibliography}

\end{document}